\documentclass[11pt]{article}
\usepackage{amssymb, amsmath, amsthm, xstring, mathrsfs, mathtools, float}

\newcommand{\spa}[1]{
	\IfEqCase{#1}{ 
		{1}{\:}
		{2}{\:\:}
		{3}{\:\:\:}
		{4}{\:\:\:\:}
		{5}{\:\:\:\:\:}
		{6}{\:\:\:\:\:\:}
		{7}{\:\:\:\:\:\:\:}
		{8}{\:\:\:\:\:\:\:\:}
		{9}{\:\:\:\:\:\:\:\:\:}
		{10}{\:\:\:\:\:\:\:\:\:\:}
		{11}{\:\:\:\:\:\:\:\:\:\:\:}
		{12}{\:\:\:\:\:\:\:\:\:\:\:\:}
		{13}{\:\:\:\:\:\:\:\:\:\:\:\:\:}
		{14}{\:\:\:\:\:\:\:\:\:\:\:\:\:\:}
		{15}{\:\:\:\:\:\:\:\:\:\:\:\:\:\:\:}
		{16}{\:\:\:\:\:\:\:\:\:\:\:\:\:\:\:\:}
		{17}{\:\:\:\:\:\:\:\:\:\:\:\:\:\:\:\:\:}
		{18}{\:\:\:\:\:\:\:\:\:\:\:\:\:\:\:\:\:\:}
		{19}{\:\:\:\:\:\:\:\:\:\:\:\:\:\:\:\:\:\:\:}
		{20}{\:\:\:\:\:\:\:\:\:\:\:\:\:\:\:\:\:\:\:\:}
		{-1}{\!}
		{-2}{\!\!}
		{-3}{\!\!\!}
		{-4}{\!\!\!\!}
		{-5}{\!\!\!\!\!}
	}[\PackageError{spa}{Undefined option to spa: #1}{}]
}

\newcommand{\subST}{_{_{ST}}}

\newcommand{\kaufST}[1]{\langle #1 \rangle\subST}
\newcommand{\kauf}[1]{\langle #1 \rangle}

\usepackage{graphicx}
\usepackage{enumitem}

\setlength{\topmargin}{-0.5in}
\setlength{\textheight}{9in}
\setlength{\oddsidemargin}{0in}
\setlength{\textwidth}{6.5in}

\theoremstyle{plain}
\newtheorem{thm}{Theorem}
\newtheorem{lmm}{Lemma}

\newtheorem{corol}{Corollary}

\newtheorem{prop}{Proposition}

\newtheorem{mth}{Method}

\theoremstyle{definition}

\newtheorem{exmp}{Example}
\theoremstyle{remark}

\setcounter{section}{-1}

\date{}
\title{\textbf{On wrapping number, adequacy and the crossing number of satellite knots}}
\date{} 
\author{Adri\'an Jim\'enez Pascual}

\begin{document}
\maketitle

\begin{abstract}
In this work we establish the tightest lower bound up-to-date for the minimal crossing number of a satellite knot based on the minimal crossing number of the companion used to build the satellite. If $M$ is the wrapping number of the pattern knot, we essentially show that $c(Sat(P,C))>\frac{M^2}{2}c(C)$. The existence of this bound will be proven when the companion knot is adequate, and it will be further tuned in the case of the companion being alternating.
\end{abstract}

\section{Introduction}
This work is divided into two parts: the first one explores the concept of \emph{wrapping number}, presenting two methods that will explicitly extract the wrapping number of any given knot diagram in the annulus; the second part focuses on \emph{link adequacy}, proving some known and some new results regarding adequate links and their parallel versions. These results on adequacy will then help to establish the basis for proving results concerning the \emph{crossing number} of link parallels and, later, satellite knots. In this direction, we will prove first (\emph{Theorem \ref{thmCrossParal}}) that
\[c(L^r)\geq \frac{r^2}{2}c(L)+2r-1,\]
where $L^r$ is the $r$-parallel of the adequate oriented link $L$. We will then, before directly working with satellite knots, bring to specific terms the concept of \emph{grafting composition}. The concept of grafting is not new to knot theory: it is the generalization of the connected sum of knots, where instead of ``cutting and pasting'' one strand of each knot we use several. Nonetheless, we will take the chance to define it rigorously so that its usage is well settled. Then, we will proceed to establish some results regarding the breadth of knots and their parallels and satellites, among which the main lemma will be the following (\emph{Lemma} \ref{lemmamain}):
\[B(J(K;r))\geq B(J(K^r)).\]
The terms in this inequality represent, respectively, the breadth of the $r$-cable and $r$-parallel of the oriented adequate knot $K$. Finally, we will conclude with the main theorem of this work (\emph{Theorem \ref{thmMain}}), which brings together all previous concepts and results and reads:
\[c(Sat(P,C))\geq(M_{\beta_M}-m_{\beta_M})+\frac{M^2}{2}c(C)+2M-1,\]
assuming $C$ is adequate, and being the Jones polynomial of the pattern $P$ in the solid torus $J\subST(P)=\sum_{k=0}^M\beta_kz\subST^k$, and $M_{\beta_M}$ and $m_{\beta_M}$ respectively the maximum and minimum exponents of $t$ in $\beta_M\in\mathbb{Z}[t^{-1/2},t^{1/2}]$.

\section{Wrapping number}\label{secWrap}

In this section we will present two related methods that will help us determine the specific wrapping number of a given knot diagram in the solid torus ($ST$). These methods will be presented together with some basic results bounding the wrapping number of a knot. We will start by introducing the definition of wrapping number.

Let $K\subset ST$ be a knot in the solid torus, and let $\mathcal{D}(K)$ be the set of all regular diagrams of $K$ in the annulus. For a diagram $D\in \mathcal{D}(K)$, we define the \emph{wrapping number} of that specific diagram $D$ ---and write $w\subST (D)$--- as the minimal number of intersections a meridian of the annulus generates with $D$ when traversing from the center of the annulus to its exterior part. See \emph{Figure \ref{figMeri1}}.
\begin{figure}[ht]
	\centering
	\includegraphics[scale=0.5]{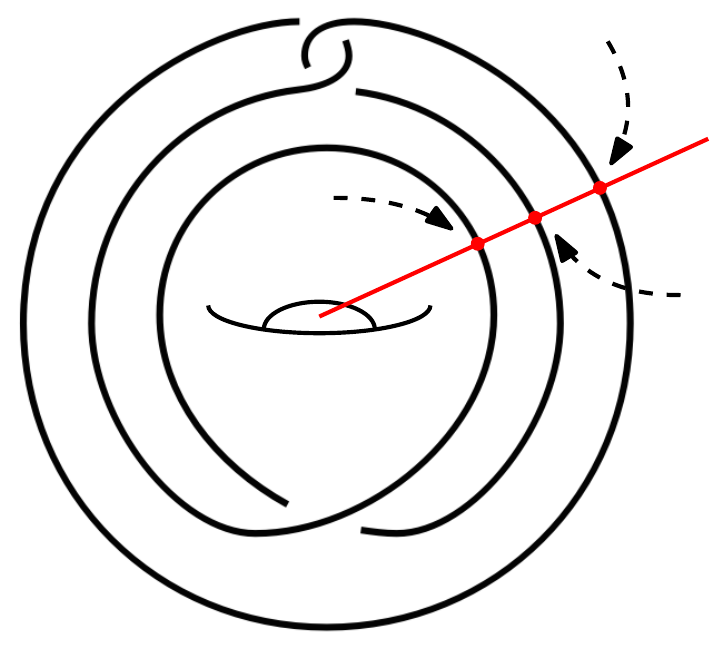}
	\caption{$w\subST(L(1,2))=3$.}
	\label{figMeri1}
\end{figure}

Using this definition, the wrapping number of a knot $K$ is then defined as
\[w\subST (K)=\min_{D\in \mathcal{D}(K)}w\subST (D).\]
For example, the unknot $U$ inside the solid torus has wrapping number $0$, since that is its minimal number of intersections it can have with a meridian. Its diagram $U_1$ in \emph{Figure \ref{figMeri2}} realizes this value. Nonetheless, it is worth seeing that the diagram of the unknot $U_2$ only has one meridian (up to isotopy) that crosses it, and it has wrapping number $2$ --- as diagram.
\begin{figure}[ht]
	\centering
	\includegraphics[scale=0.5]{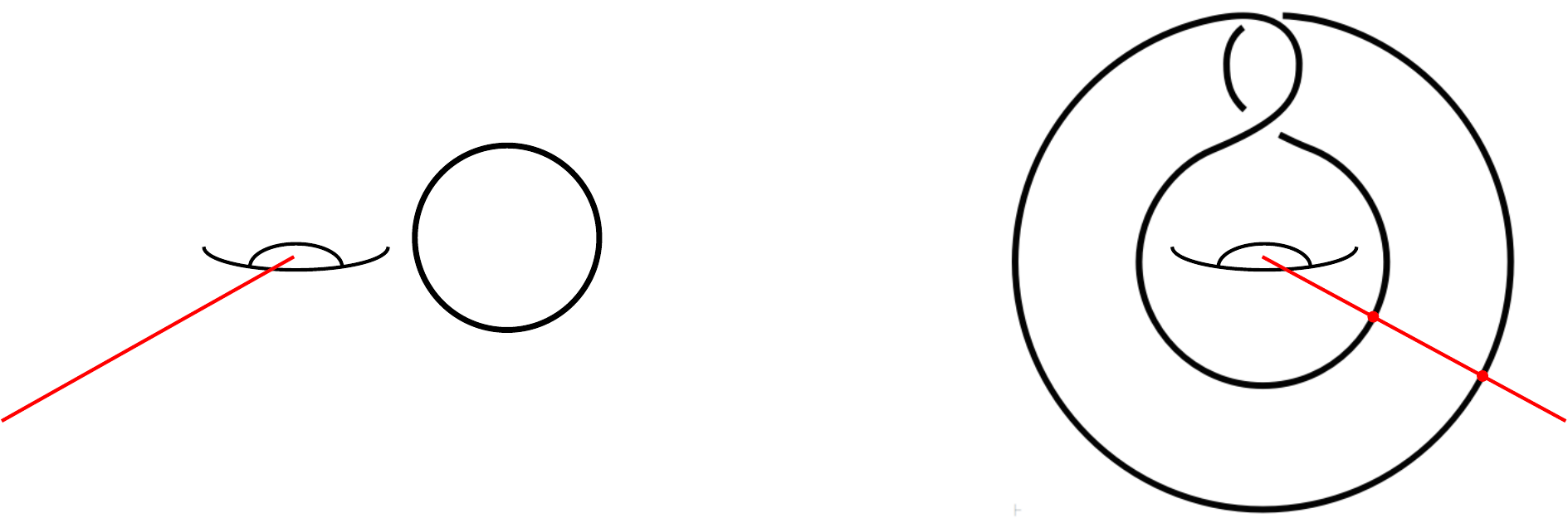}\\
	$U_1$\spa{20}\spa{20}\spa{14} $U_2$
	\caption{$w\subST(U)=w\subST(U_1)=0$, but $w\subST(U_2)=2$.}
	\label{figMeri2}
\end{figure}

In this work we do not require meridians to be a line segment, since when considering a knot with all its diagrams any diagram can be isotopically transformed so that the meridian crossing it would be straightened out, in case it is needed. Also, all isotopic meridians for a given diagram will be represented by only one representative at a time.

The \emph{wrapping number} (also known as \emph{geometric degree}) is not to be confused with the \emph{winding number}, which is defined in a similar fashion as the wrapping number, where each intersection of $D$ (oriented) with a meridian is counted with sign, and so $-w\subST (D)\leq wind (D)\leq w\subST (D)$. See \emph{Figure \ref{figWind}} for reference.
\begin{figure}[ht]
	\centering
	\includegraphics[scale=0.5]{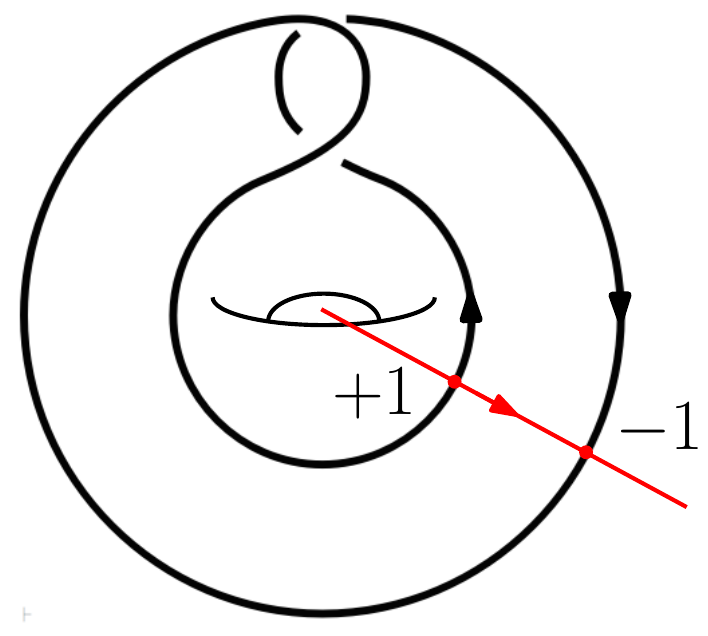}
	\caption{$w\subST(U_2)=2; wind(U_2)=0$.}
	\label{figWind}
\end{figure}

Let us now introduce the basic result that we will be using as a first bound setter for the wrapping number of a knot.

\begin{prop}\label{propWrap}
Let $K\subset ST$ be a knot in the solid torus with Jones polynomial $J\subST(K)=\sum_{i=0}^M\beta_iz\subST^i$, $\beta_M\neq 0$. Then
\[0\leq M\leq w\subST(K).\]
\end{prop}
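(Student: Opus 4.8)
The plan is to compute $J\subST(K)$ from an arbitrary annular diagram $D$ of $K$ via the Kauffman bracket state sum, and then to bound the top $z\subST$-degree geometrically by the number of times $D$ must cross a meridian. Recall that the Kauffman bracket skein module of the solid torus is freely generated over $\mathbb{Z}[A^{\pm1}]$ by the powers $z\subST^k$ ($k\ge 0$), where $z\subST^k$ is represented by $k$ parallel copies of the core, while a null-homotopic circle contributes only the scalar $\delta=-A^2-A^{-2}$. First I would expand $\kaufST{D}$ over all its Kauffman states. Each state $S$ is a disjoint union of simple closed curves in the annulus; sorting these into $t_S$ null-homotopic (\emph{trivial}) circles and $k_S$ essential (core-parallel) circles gives a contribution of the form $A^{\,a(S)-b(S)}\delta^{\,t_S}z\subST^{\,k_S}$. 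Since reducing the trivial circles introduces only the scalar $\delta$ and never raises the $z\subST$-degree, the top degree of the resulting polynomial obeys $M\le\max_S k_S$.

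The geometric heart of the argument is the bound $\max_S k_S\le w\subST(D)$. To obtain it I would fix a meridian $\mu_0$ realizing the wrapping number of $D$, so that $|D\cap\mu_0|=w\subST(D)$, chosen transverse to $D$ and disjoint from all its crossings. Smoothing a crossing is a modification supported in a small disk away from $\mu_0$, so it leaves the intersection with $\mu_0$ unchanged; hence every state satisfies $|S\cap\mu_0|=w\subST(D)$. Now each essential circle of $S$ is homotopically nontrivial in the annulus and therefore meets the dual arc $\mu_0$ in an odd number of points, at least one, whereas each trivial circle meets it in an even number (possibly zero). Counting intersections circle by circle then gives $k_S\le|S\cap\mu_0|=w\subST(D)$.

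Chaining the two inequalities yields $M\le\max_S k_S\le w\subST(D)$, and since $D$ was an arbitrary element of $\mathcal{D}(K)$, minimizing over all such diagrams gives $M\le w\subST(K)$. The left inequality $0\le M$ is immediate, as $M$ is by hypothesis the top index of a sum indexed from $0$ with $\beta_M\neq0$.

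The step I expect to be the main obstacle is the clean justification of $k_S\le w\subST(D)$: one must argue that the meridian minimizing $|D\cap\mu_0|$ can be placed in general position with respect to both the underlying projection and the crossings, that smoothings genuinely preserve the meridian-intersection count, and that the essential/trivial dichotomy of state circles behaves as claimed relative to $\mu_0$. Care is also needed to confirm that collapsing trivial circles to the scalar $\delta$ in the skein module cannot increase the $z\subST$-degree, so that the algebraic degree $M$ is indeed controlled by the purely combinatorial quantity $\max_S k_S$.
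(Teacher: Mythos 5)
Your proposal is correct and takes essentially the same route as the paper: both arguments rest on the observations that the $z\subST^M$ term of the Kauffman bracket state sum can only arise from a state containing $M$ circles encircling the center of the annulus, and that each such essential circle forces at least one intersection with any meridian, giving $M\leq w\subST(D)$. The paper phrases this as a contradiction on a diagram realizing $w\subST(K)$, whereas you argue directly on an arbitrary diagram and then minimize, filling in the transversality and smoothing-preserves-intersection details; the mathematical content is the same.
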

\begin{proof}
Let $D$ be a diagram of $K$ such that it realizes the wrapping number of $K$, and suppose $M>w\subST(K)$. This $D$ would have a Kauffman bracket of the form $\kaufST{D}=\sum_{i=0}^M\alpha_iz\subST^i$, $\alpha_M\neq 0$. Since $z\subST^M\neq0$, it must arise from a state that encircles $M$ times the center of the annulus. But this is a contradiction, because if the center is surrounded $M$ times, that would be, at least, the amount of times a meridian would cross $D$ to reach its exterior part.
\end{proof}

\begin{mth}\label{method_1}
Let $K$ be a knot in $ST$. On the annular projection $D$ of $K$ we perform the following steps.
\begin{enumerate}
	\setcounter{enumi}{-1}
	\item Start at the center of the annulus.
	\item Color the region until we meet its bounds.
	\item Split the crossings in the boundary as shown:
	\[\includegraphics[scale=0.75]{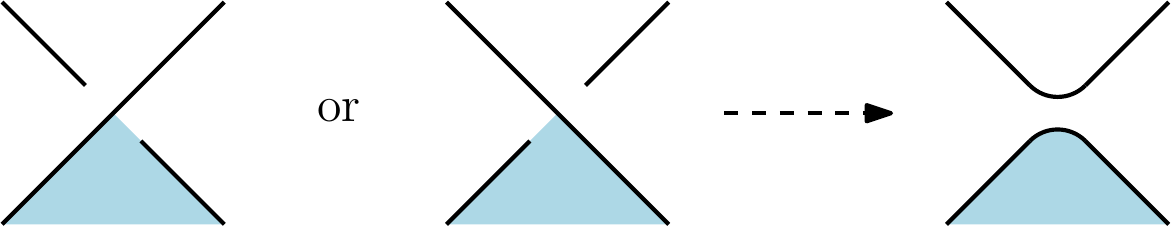}\]
	The colored region becomes bounded by a circuit that encircles the center of the annulus.
	\item Advance to the contiguous outer region. (We ignore any uncolored inner isolated regions that might have appeared.)
	\item
	\begin{itemize}
		\item If the region is bounded, repeat from \emph{Step 1}.
		\item If the region is unbounded, we represent the resulting diagram by $D^*$ and define
		\[ec(D^*):=\#\textnormal{\emph{colors}}\]
	\end{itemize}
\end{enumerate}
\end{mth}

Here $ec(D^*)$ stands for the ``encircling circuits (around the center of the annulus) of $D^*$'', and is defined as the number of colors used in the process of applying \emph{Method \ref{method_1}}.

\begin{exmp}\label{ex1}
We will use the following knot diagram that we will call $D_{20}$ throughout this section to illustrate how the methods here defined work.
\begin{figure}[H]
	\centering
	\includegraphics[scale=0.5]{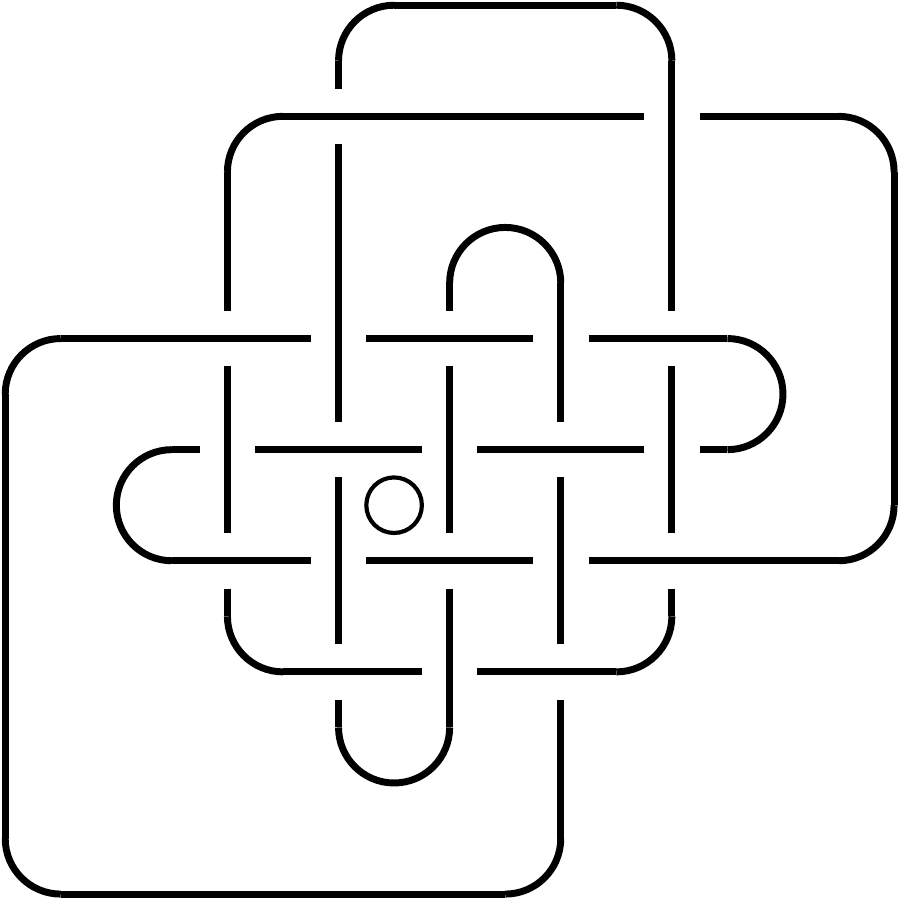}
	\caption{$D_{20}$.}
	\label{figD20}
\end{figure}
\noindent Let us apply \emph{Method \ref{method_1}} to $D_{20}$. As described, starting in the center of the annulus, we fill in with color the first region and split the crossings where the filling meets its bounds.
\begin{figure}[H]
	\centering
	\includegraphics[scale=0.575]{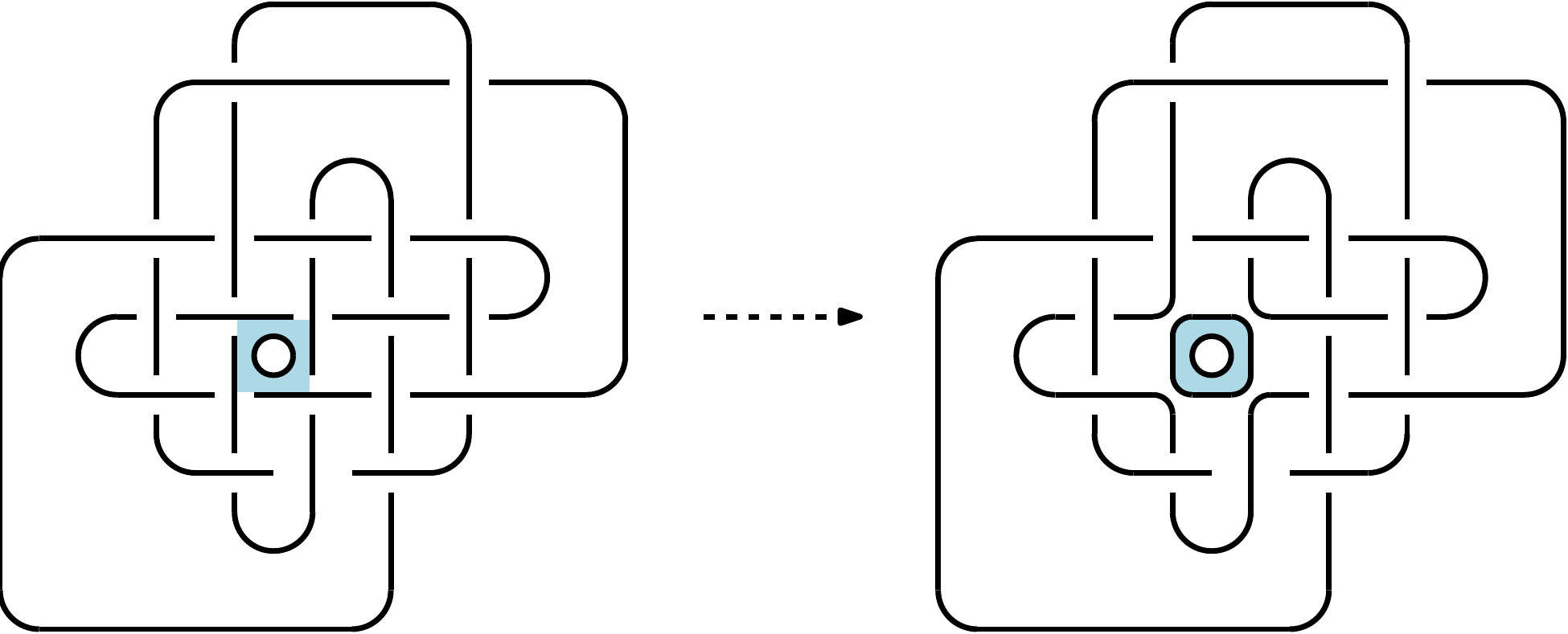}
	\caption{First application of \emph{Method \ref{method_1}}.}
	\label{figM11}
\end{figure}
\noindent Then, we proceed to the outer contiguous region and repeat the process.
\begin{figure}[H]
	\centering
	\includegraphics[scale=0.575]{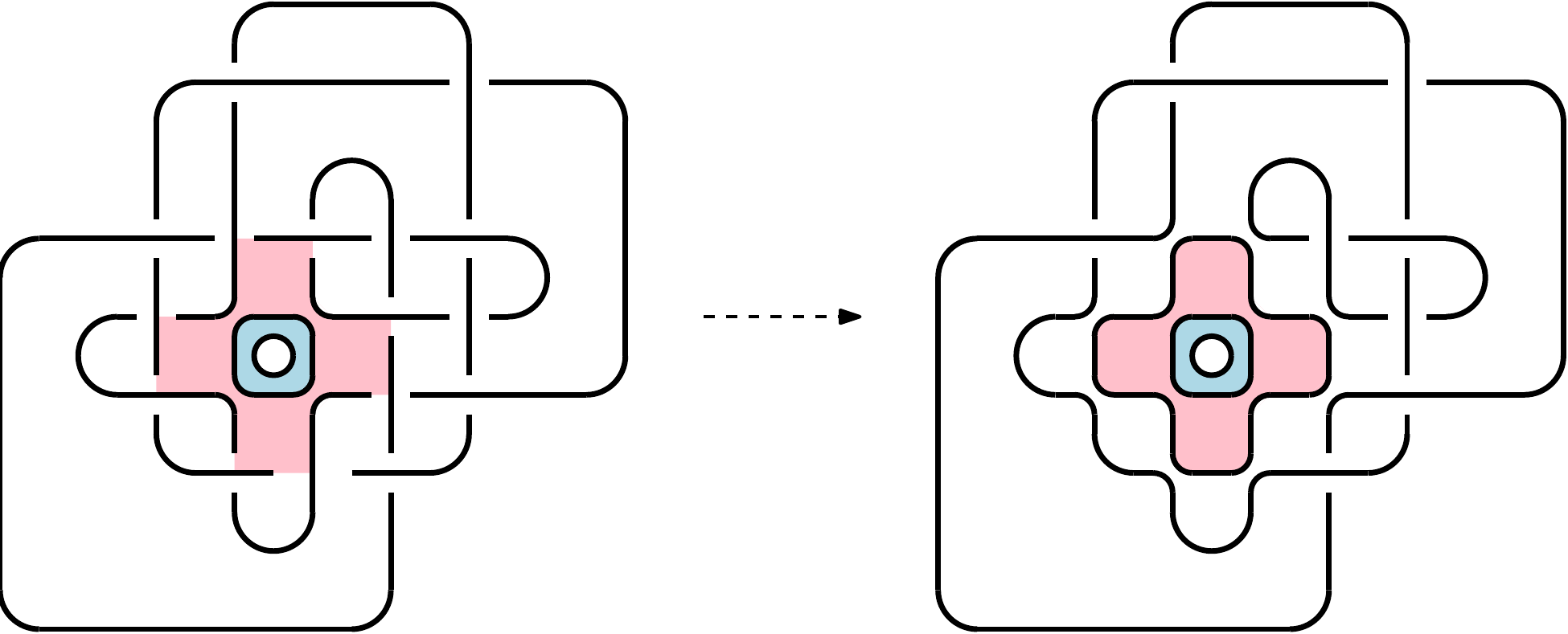}
	\caption{Second application of \emph{Method \ref{method_1}}.}
	\label{figM12}
\end{figure}
\noindent We do the same thing once again.
\begin{figure}[H]
	\centering
	\includegraphics[scale=0.575]{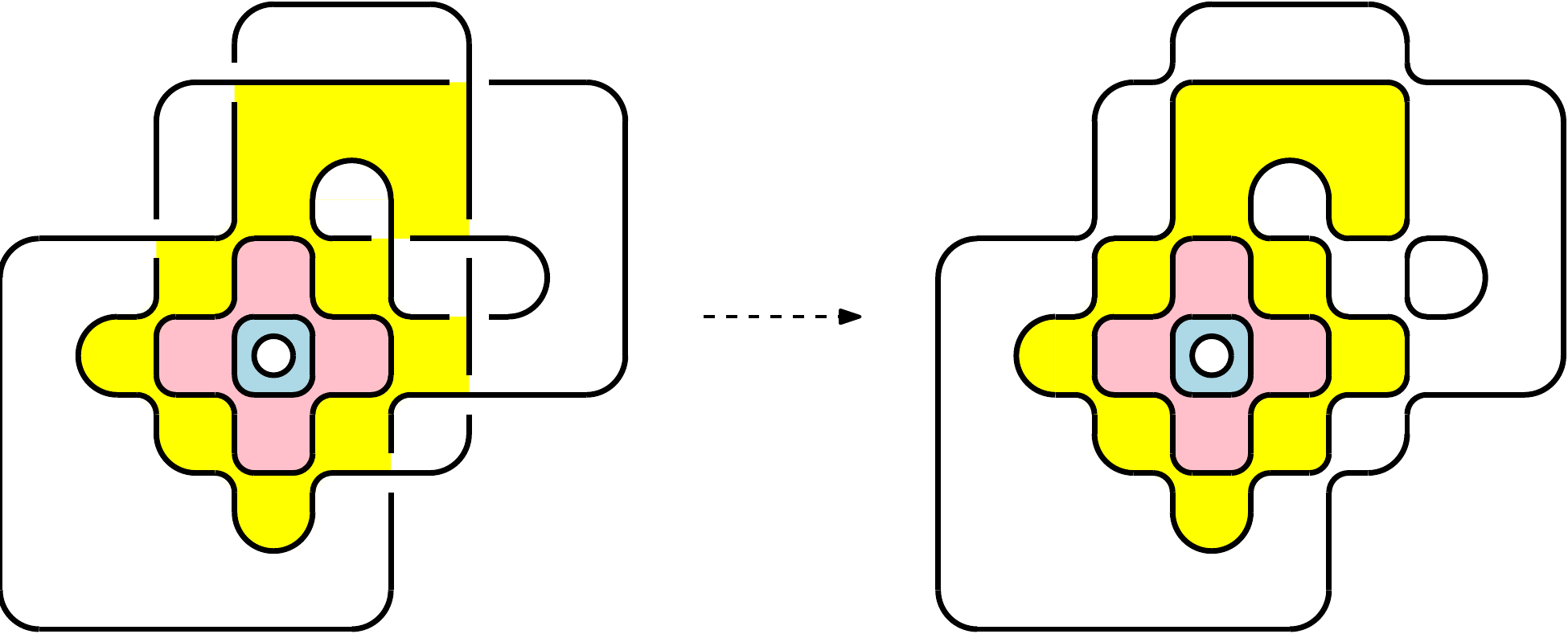}
	\caption{Third application of \emph{Method \ref{method_1}}.}
	\label{figM13}
\end{figure}
\vspace*{-0.5em}
\noindent And finally, in the last application there are no crossings to split, thus we just fill in the region with color. Please notice that as pointed out in \emph{Step 3}, there is an isolated region which needs not be taken into account (since it is irrelevant for the construction).
\begin{figure}[H]
	\centering
	\includegraphics[scale=0.48]{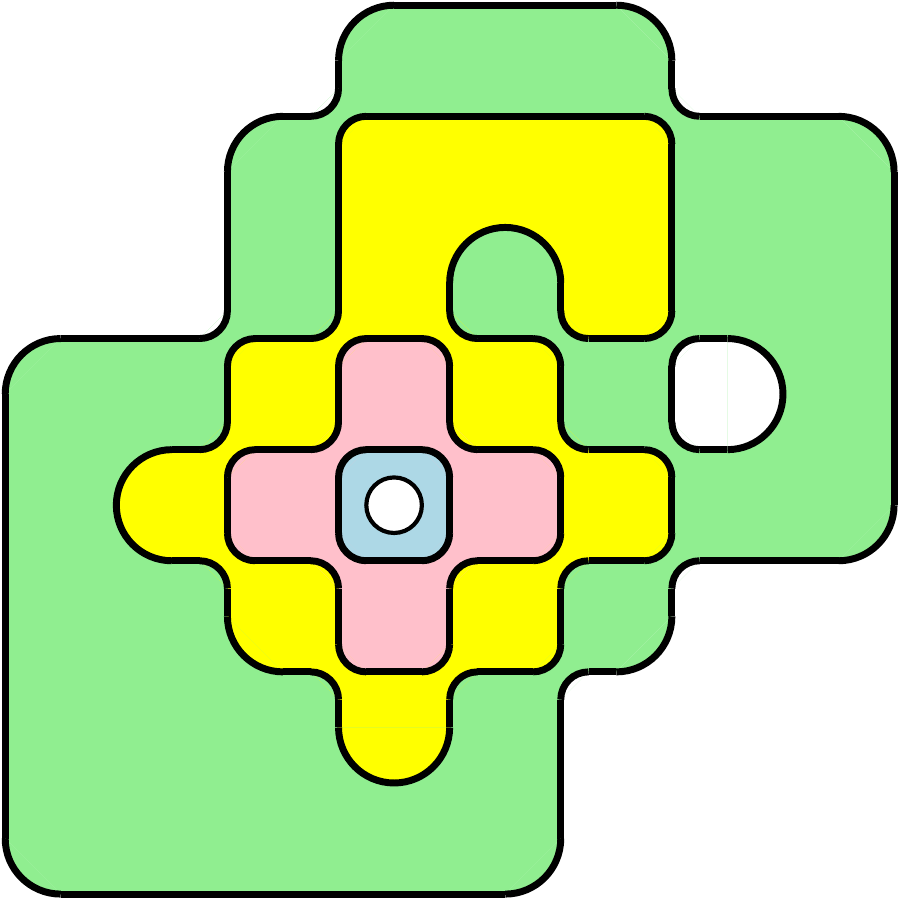}
	\caption{Last application of \emph{Method \ref{method_1}}.}
	\label{figM14}
\end{figure}
\vspace*{-0.5em}
\noindent If we expanded to yet the next outer region we would already be outside the knot, in an unbounded region, therefore the method has come to an end. Counting then the number of colors used in the whole process we can say that for this specific diagram
\[ec(D_{20}^*)=4.\]
\end{exmp}
As seen in the example, this $ec(D^*)$ value basically measures the number of times the knot projection is revolving around the center of the annulus. We will prove this value is actually the minimal wrapping number of $D$. This is better converted into words in the following result.

\begin{mth}\label{method_2}
Let $K$ be a knot in $ST$ and $D$ an annular diagram of $K$. On $D^*$, obtained through \emph{Method \ref{method_1}}, we perform the following steps.
\begin{enumerate}
	\setcounter{enumi}{-1}
	\item Start at the center of the annulus.
	\item Connect the colored region with its contiguous colored (outer) next through all connecting walls using directed edges.
	\item Advance to the contiguous outer region.	\item
	\begin{itemize}
		\item If the region is bounded, repeat from \emph{Step 1}.
		\item If the region is unbounded, any reversed path from the exterior of the projection to the center of the annulus is a valid meridian.
	\end{itemize}
\end{enumerate}
\end{mth}

Please notice that all reverse paths starting at the outside of $D$ lead to the center of the annulus, since there are no more pure source regions (only outbound edges) than the center of the annulus---any other region either has in and out edges or is a sink.

Let us illustrate how this method works in a specific example, so that the general idea of what is going on is better grasped.

\begin{exmp}
We will use the same diagram as in \emph{Figure \ref{figD20}}. Since this method starts with the result of applying \emph{Method \ref{method_1}}, we start \emph{Step 0} with $D_{20}^*$.
\begin{figure}[H]
	\centering
	\includegraphics[scale=0.575]{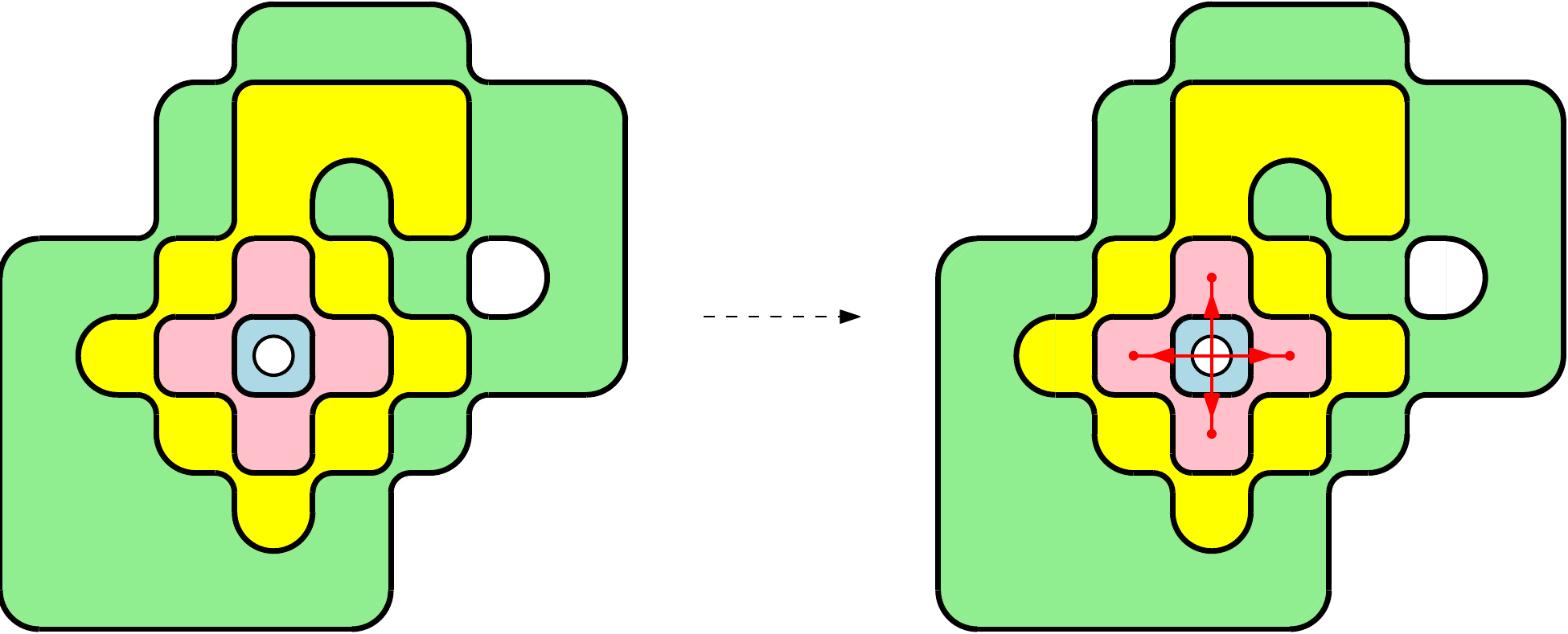}
	\caption{First application of \emph{Method \ref{method_2}}.}
	\label{figM21}
\end{figure}
And then keep repeating the process until we reach the outmost region.
\begin{figure}[H]
	\centering
	\includegraphics[scale=0.575]{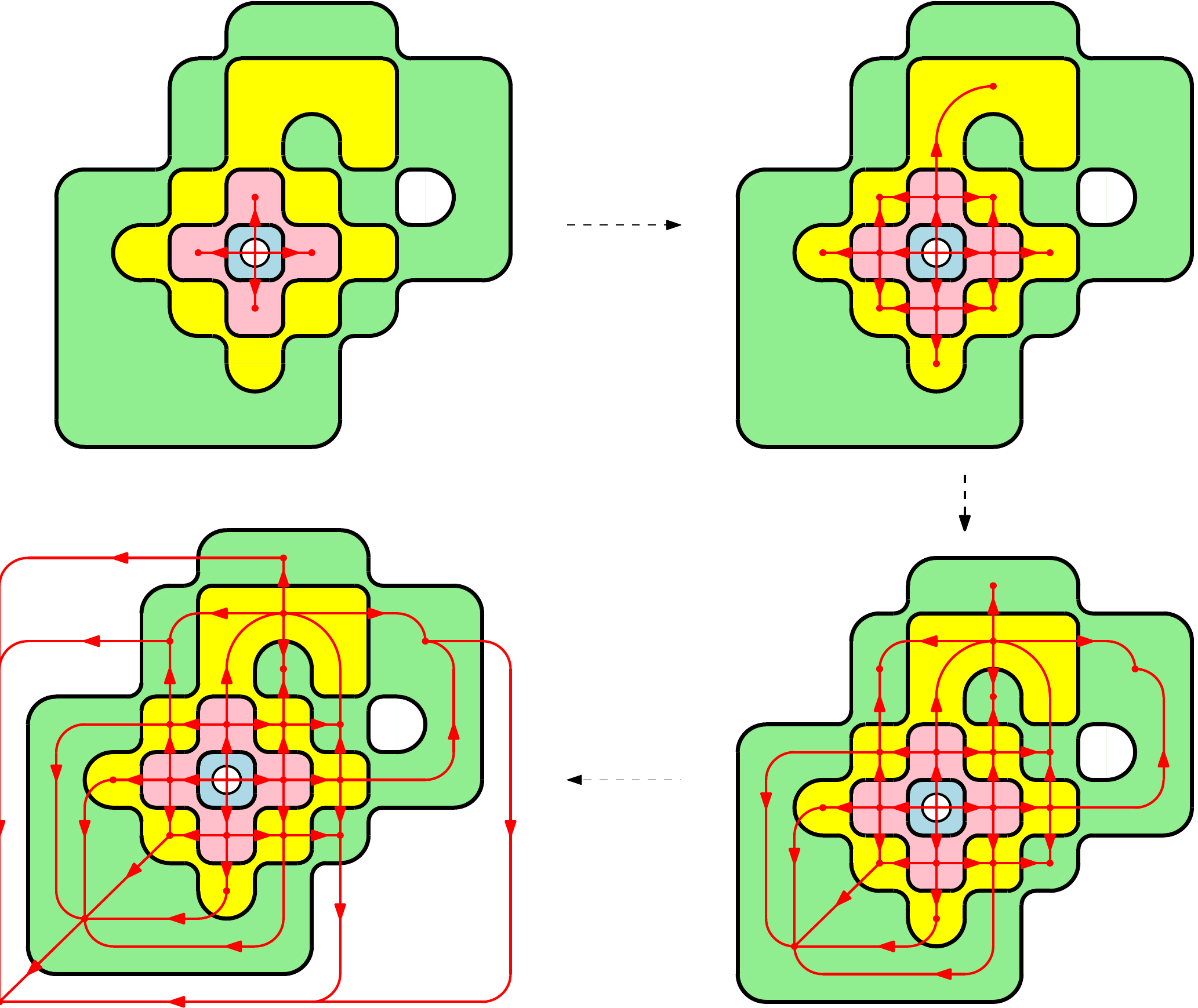}
	\caption{\emph{Method \ref{method_2}} overview.}
	\label{figM22}
\end{figure}
\noindent At this moment we have come to \emph{Step 3} with an unbounded region, thus any path going backwards from the outmost region point to the center of the annulus will give us a meridian that crosses $D_{20}$ as many times as $w\subST(D)$. \emph{Figure \ref{figM23}} shows multiple valid choices of this meridian.
\begin{figure}[H]
	\centering
	\includegraphics[scale=1]{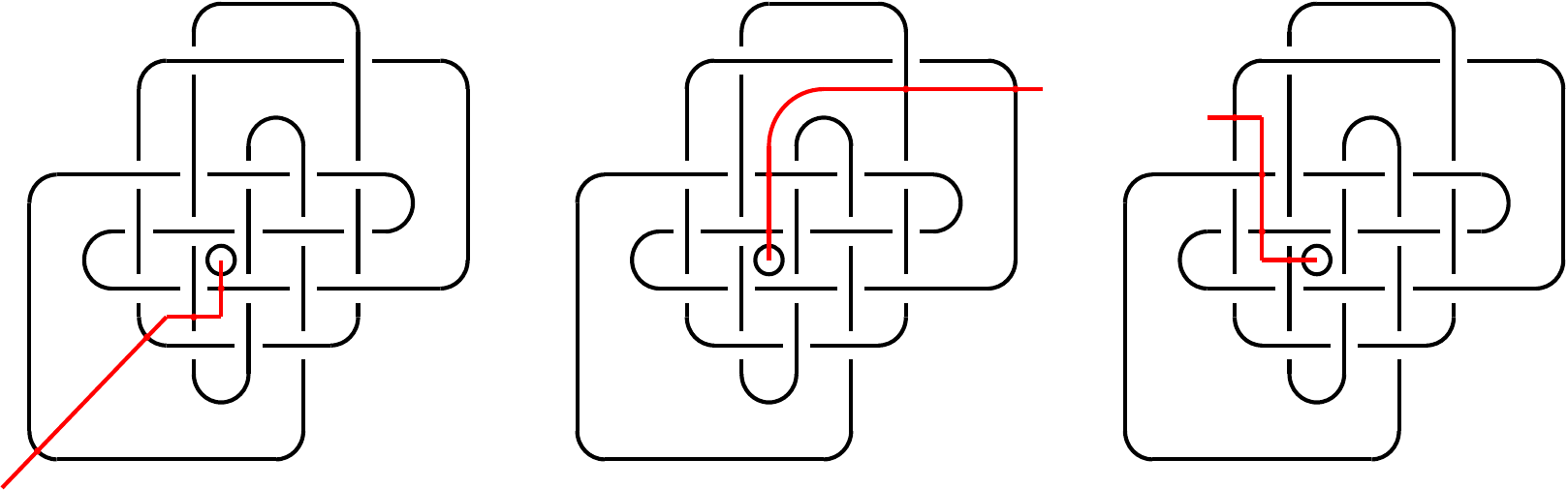}
	\caption{Possible meridians that give rise to $w\subST(D)$.}
	\label{figM23}
\end{figure}
\end{exmp}

\begin{thm}
Let $D$ be a knot diagram in the annulus, and apply \emph{Methods \ref{method_1}} and \emph{\ref{method_2}} to it. Then, any reversed path from the exterior of the projection to the center of the annulus is a minimal meridian cutting $D$ at $w\subST(D)$ points.
\end{thm}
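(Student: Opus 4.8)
The plan is to prove $w\subST(D)=ec(D^*)$ by a two–sided estimate, reading the colours produced by \emph{Method \ref{method_1}} as a nested family of essential circles in the annulus. Write $N:=ec(D^*)$ and let $R_1,\dots,R_N$ be the coloured regions in the order they are visited, so that $R_1$ is adjacent to the centre and each $R_{i+1}$ lies immediately outside $R_i$. After the crossing–splittings of \emph{Step 2} each coloured region is bounded by a circuit encircling the centre; let $W_i$ be the wall separating $R_i$ from $R_{i+1}$, with $R_{N+1}$ the unbounded outer region, so that $W_1,\dots,W_N$ are exactly the $N$ encircling circuits counted by the colours. The first facts to record are that these $W_i$ are pairwise disjoint simple closed curves, each essential in the annulus (it surrounds the hole) and hence each separating the centre from the exterior, and that as point sets $\bigcup_i W_i\subseteq D$, since the splittings only reconnect strands at crossings and never leave the underlying arcs of $D$.

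For the upper bound I would show that every reversed \emph{Method \ref{method_2}} path is a meridian meeting $D$ in exactly $N$ points. The directed edges join each $R_i$ only to its contiguous outer neighbour $R_{i+1}$, so the digraph is layered and its unique pure source is the centre, as noted in the remark following the method. Consequently any directed path from the centre to the unbounded region is monotone through the layers and uses exactly one edge across each of $W_1,\dots,W_N$; reversing it yields a simple arc running from the exterior to the centre, i.e.\ a meridian, and each of its $N$ wall–crossings can be taken transverse to a single strand of $D$ away from the crossings, contributing exactly one intersection with $D$. Hence this meridian realizes $N$ intersections, giving $w\subST(D)\le N$.

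For the lower bound (the minimality claim) I would argue that no meridian can do better. Any meridian $\mu$ runs from the centre to the exterior, so for each $i$ it passes from the inner side of $W_i$ to its outer side and must therefore cross the separating curve $W_i$ at least once; since the $W_i$ are disjoint these crossings occur at distinct points, and since $W_i\subseteq D$ each is an intersection of $\mu$ with $D$. Thus $\mu$ meets $D$ in at least $N$ points, giving $w\subST(D)\ge N$. Combining the two estimates yields $w\subST(D)=N=ec(D^*)$, and since every reversed path already attains $N$, every such path is indeed a minimal meridian.

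The steps needing the most care — and where I expect the real work to lie — are the purely topological claims underlying the first paragraph: that each wall $W_i$ produced by the splitting is genuinely a \emph{simple} closed curve that is essential and separating (rather than, say, a circuit leaving the centre on the wrong side), and that the reversed path can always be realized as an embedded arc meeting each wall transversally in one point disjoint from the crossings. Establishing the former requires checking the local picture of \emph{Step 2} case by case to see that the splitting of the boundary crossings always closes the coloured region into a single encircling circuit, and that the ignored inner isolated regions never disturb the nesting $R_1,R_2,\dots,R_N$; the latter is a general–position argument together with the bookkeeping that crossing a wall contributes exactly one, not two, intersections with $D$. The global combinatorics — layeredness of the digraph, uniqueness of the source, and monotonicity of every path — is then routine from the construction.
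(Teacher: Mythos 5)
Your proposal splits the statement into an upper bound (reversed paths realize $N=ec(D^*)$ intersections) and a lower bound (no meridian can do better), and the lower bound is a genuinely different argument from the paper's. The paper proves minimality by what is essentially a breadth-first search carried out at the level of the \emph{individual regions of $D$}: the meridian's ``head'' is advanced one arc-crossing at a time into adjacent uncoloured regions, no edge is ever drawn into an already-coloured region, and minimality is inherited from the shortest-path property of this wavefront. You instead read the walls $W_1,\dots,W_N$ of \emph{Method \ref{method_1}} as nested essential circles carried by $D$, and force $N$ intersections for \emph{every} meridian by a separation argument: each $W_i$ separates the centre from the exterior, so a meridian must cross each one, and distinct walls give distinct points of $D$. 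This is cleaner and more checkable than the paper's appeal to BFS optimality, and it delivers \emph{Corollary \ref{corol_ec}} directly.

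Your upper bound, however, has a genuine gap, and its source is that your two halves use incompatible conventions for the walls. For the lower bound you need $W_i\subseteq D$ as point sets, i.e.\ the ``corner'' convention in which splitting keeps the strands and only reconnects them at the crossing point. But under that convention the coloured layer $R_{i+1}$ is \emph{not} a connected open subset of the complement of $D$: it is a union of regions of $D$ that touch only at split crossing points. Your layered digraph, with one edge through each connecting wall, therefore admits directed paths that cannot be realized as meridians with only $N$ intersections. Concretely, suppose a region $F$ is adjacent to $R_1$ across one arc of $W_1$ while all of its other arc-neighbours are themselves adjacent to $R_1$ (a pocket in the first layer; such configurations exist in messy diagrams). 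Then $F\subseteq R_2$ and $F$ receives an edge through $W_1$, but every generic continuation out of $F$ crosses $D$ again before reaching $W_2$, so any ``reversed path'' using that edge meets $D$ at least $N+1$ times --- contradicting the claim that it uses ``exactly one edge across each wall'' and nothing more. This is precisely what the paper's finer construction is built to avoid: in its digraph on regions of $D$, such a pocket has incoming edges but no outgoing ones, i.e.\ it is a \emph{sink} (see the remark following \emph{Method \ref{method_2}}), so paths traversed backwards from the exterior can never enter it. To close your argument, replace the coarse layered digraph by that finer one in the upper bound: edges join regions of $D$ across single arcs, consecutive edges share one region of $D$, and the path meets $D$ exactly once per edge, giving $w\subST(D)\le N$; your separation argument then supplies the matching bound $w\subST(D)\ge N$. (Alternatively, if you round the smoothings so that the layers genuinely merge, the routing problem disappears but $W_i\not\subseteq D$, and your lower bound then needs a local parity argument at each smoothed crossing.)
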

\begin{proof}
The proof is constructive and inductive. Let us consider a meridian $m$ that will start at the center of the annulus. This meridian will have a ``basis'' ---center of the annulus--- and a ``head'' ---tip of the meridian looking forward to get out of $D$. For $m$ to reach the exterior of $D$, it will first need to ``hit'' the knot's innermost strands. This is, there is a region delimited by the innermost strands and crossings of the diagram that delimits the area of free movement of the ``head'' of $m$ before it hits its first wall---blue region in \emph{Figure \ref{figM11}}. Once this happens, this will count as $+1$ crossings between the meridian and $D$, after which it would enter a new region---see the meridians of \emph{Figure \ref{figM21}}. We set the ``head'' of $m$ in all possible regions where it could have progressed when advancing from the previous region, and label the region that it just left with a color. From here on the proof is inductive: we will repeat the process of searching for the new strands surrounding every possible ``head'' of the meridian as it advances towards the exterior part. If during the process we encounter a bounding region which has already been colored, it would mean that the fastest path to it is not the one we are heading, but the one which generated its coloring. Therefore we do not create a directed edge (meridian) between already colored regions. The process finishes when a meridian reaches the exterior of $D$. The result would look similar to the last step of \emph{Figure \ref{figM22}}.\\
\noindent Any path that reaches the exterior in these steps will be minimal in crossings with $D$---which will be exactly $ec(D^*)$ times---, and since there is no other pure source (only edges pointing outwards) than the center of the annulus any path reversed is a valid meridian.
\end{proof}

\begin{corol}\label{corol_ec}
$w\subST(D)=ec(D^*).$
\end{corol}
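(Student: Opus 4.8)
The plan is to obtain the equality directly from the preceding theorem by splitting it into the two inequalities $w\subST(D)\leq ec(D^*)$ and $w\subST(D)\geq ec(D^*)$, the theorem having already done the essential work of exhibiting and certifying the extremal meridian.

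For the upper bound I would simply invoke the explicit meridian produced by \emph{Method \ref{method_2}}. That construction yields a reversed path from the exterior of the projection to the center of the annulus which, as the preceding theorem establishes, meets $D$ in exactly $ec(D^*)$ points. Since $w\subST(D)$ is by definition the minimum number of intersections taken over \emph{all} meridians, the mere existence of this one meridian forces $w\subST(D)\leq ec(D^*)$.

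For the lower bound I would argue that no meridian can meet $D$ in fewer than $ec(D^*)$ points. The key observation is that \emph{Method \ref{method_1}} produces $ec(D^*)$ distinct nested circuits, one per color, and that each of them encircles the center of the annulus, as asserted in \emph{Step 2}. Topologically this makes every such circuit an essential simple closed curve in the annulus, so it separates the inner boundary component (the center) from the outer boundary component (the unbounded exterior). Consequently any meridian travelling from the center to the exterior must cross each of the $ec(D^*)$ circuits at least once; since the circuits bound regions of strictly increasing color index, these forced crossings occur on distinct strands of $D$ and are therefore distinct intersection points. This gives at least $ec(D^*)$ intersections, i.e. $w\subST(D)\geq ec(D^*)$, and combined with the upper bound yields $w\subST(D)=ec(D^*)$.

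The step I expect to be the main obstacle is making the separation argument fully rigorous. One must confirm that the colored boundaries really do close up into the essential simple closed curves claimed in \emph{Step 2}—so that the separation property of essential annular curves applies—and, more delicately, that the crossing-splittings performed there genuinely correspond to intersections with $D$: a meridian passing through a split crossing must still register a $+1$ intersection with a strand of $D$, so that ``crossing the $i$-th circuit'' translates into a bona fide transverse intersection with the diagram rather than an artifact of the bookkeeping. Once this correspondence between circuit-crossings and diagram-intersections is secured, the counting that produces both inequalities is routine.
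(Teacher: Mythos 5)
Your proposal is correct, and its second half takes a genuinely different route from the paper. The paper treats the corollary as a one-line extraction from the preceding theorem: the reversed paths produced by \emph{Method \ref{method_2}} cross $D$ exactly $ec(D^*)$ times by construction, and they are minimal by the informal ``fastest path'' (breadth-first) induction inside that proof, so the two quantities coincide. You instead use the theorem only for the upper bound $w\subST(D)\leq ec(D^*)$, and you replace its minimality claim---the least rigorous part of the paper's argument---by an independent lower bound: the $ec(D^*)$ colored boundaries are pairwise disjoint essential circuits in the annulus, each separating the center from the exterior, so any meridian must cross all of them, at distinct points. What this buys is a cleaner, topologically checkable justification of precisely the step the paper leaves at the level of intuition; what it costs is the verification you flag at the end.

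That flagged step is the only real gap, and it can be closed as follows. A meridian $m$ (taken in general position: transverse to $D$ and away from its crossing points, as the count defining $w\subST(D)$ implicitly requires) may a priori cross a circuit $C_i$ along one of the small smoothing arcs created in \emph{Step 2} of \emph{Method \ref{method_1}}, and such a crossing is not an intersection with $D$. Rather than perturbing $m$, homotope each circuit: inside every crossing disk, replace the smoothing arc of $C_i$ by the path that runs along the two strands of $D$ through the crossing point. The smoothing arc and this replacement cobound a disk inside the crossing disk, so the new closed curve $C_i'$ is homotopic to $C_i$, hence still in the homology class of the core of the annulus, and it is now contained in $D$. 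Consequently $m$ has mod $2$ intersection number $1$ with each $C_i'$, so it meets each $C_i'$ in at least one point, and every such point is a genuine point of $m\cap D$. Finally, two distinct modified circuits $C_i'$ and $C_j'$ can meet only at crossing points of $D$ (away from the crossing disks they coincide with the disjoint circuits $C_i$ and $C_j$), and $m$ avoids those points, so the intersections forced by different circuits are distinct. This yields $|m\cap D|\geq ec(D^*)$ for every meridian $m$, i.e. $w\subST(D)\geq ec(D^*)$, completing your argument; it also supplies the correct reason for the distinctness of the forced crossings, which in your sketch was attributed to the ``strictly increasing color index'' rather than to disjointness.
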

This result is extracted from the previous proof, and it helps us determine the wrapping number of a diagram without the need to explicitly find a meridian that realizes it---it suffices to apply \emph{Method 1}.

\begin{exmp}
Applying \emph{Method \ref{method_1}} to the \emph{lasso} family $L(r_1,...,r_m)$ with its usual projection $D_{L(r_1,...,r_m)}$ we get $ec(D_{L(r_1,...,r_m)})=m+1$.
\begin{figure}[H]
	\centering
	\includegraphics[scale=0.5]{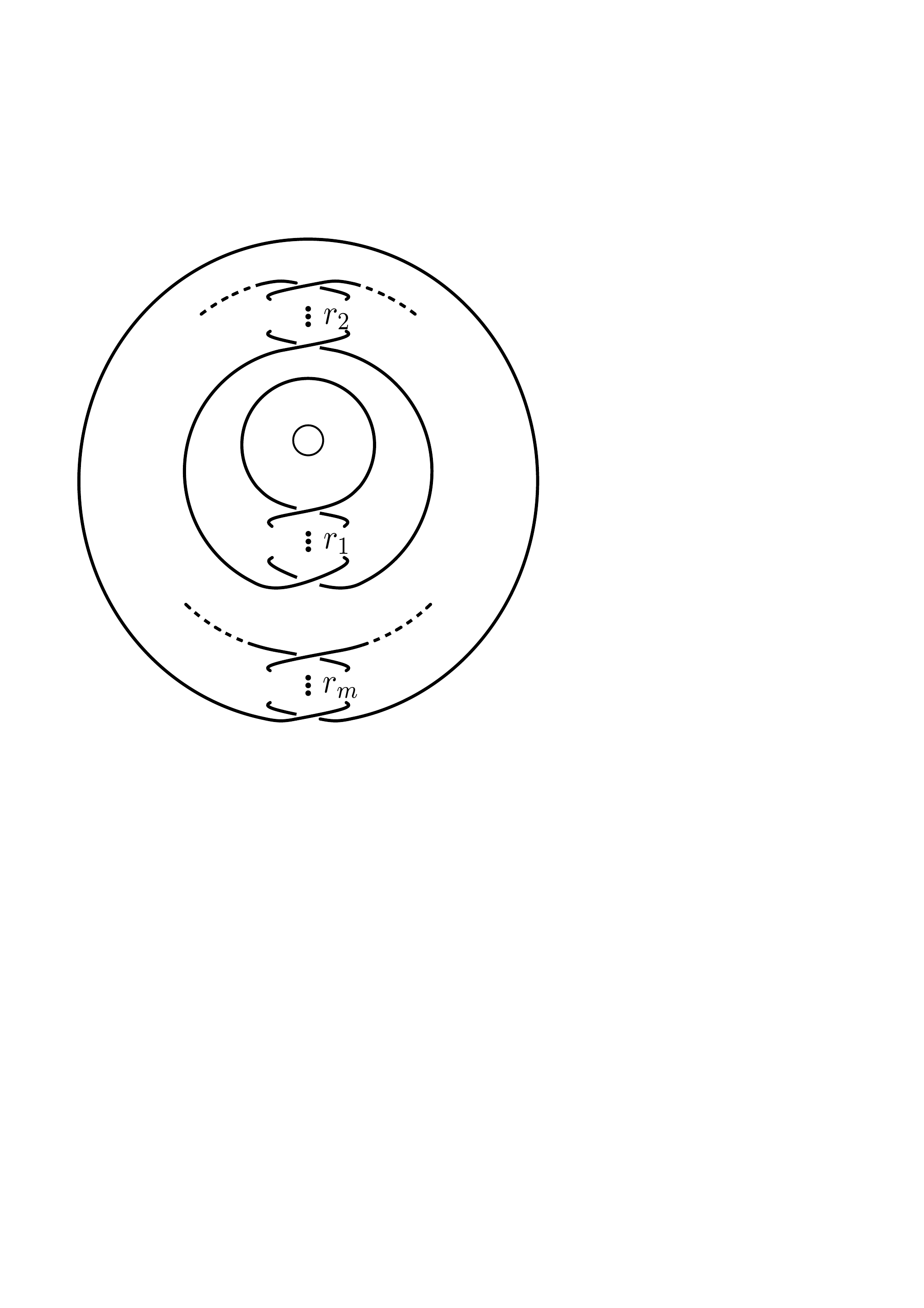}
	\caption{Standard diagram of the lasso $L(r_1,...,r_m)$ in the annulus.}
	\label{figLassoWrap}
\end{figure}
\noindent We now want to calculate the value of $M$ in $J\subST(L(r_1,...,r_m))=\sum_{i=0}^M\beta_iz\subST^i$, or, equivalently, the value of $M$ in $\kaufST{D_{L(r_1,...,r_m)}}=\sum_{i=0}^M\alpha_iz\subST^i$. The general calculation is not easily manageable, but in this occasion we will restrict the lasso to the case $r_1,...,r_m>0$ (the same can be done if all values are negative). In this case, $D_{L(r_1,...,r_m)}$ is minus-adequate (see \emph{Section \ref{secAdequacy}} for its definition), and the polynomial accompanying the state is
\[\kaufST{D_{L(r_1,...,r_m)}|s_-}=A^{\sum_{i=1}^mr_i}(-A^{-2}-A^2)^{\sum_{i=1}^m(r_i-1)}z\subST^{m+1},\]
term which is not cancelled out by any other in $\kaufST{D_{L(r_1,...,r_m)}}$ since $D_{L(r_1,...,r_m)}$ is minus-adequate. Therefore
\[M=m+1.\]
\noindent Now, using \emph{Corollary \ref{corol_ec}} and applying \emph{Proposition \ref{propWrap}} we get
\[m+1=M\leq w\subST(L(r_1,...,r_m))\leq w\subST(D_{L(r_1,...,r_m)})=ec(D_{L(r_1,...,r_m)})=m+1,\]
thus $w\subST(L(r_1,...,r_m))=m+1$ is the wrapping number of the lasso (not just of a diagram of it).
\end{exmp}

Furthermore, \emph{Corollary \ref{corol_ec}} can be extendedly used to calculate the geometrical intersection degree of two-component links where one of the components is the unknot, as we can see in the following example.

\begin{exmp}
Let us consider the link in \emph{Figure \ref{figMotegi}}, which can be found in \cite{Mo}.
\begin{figure}[H]
	\centering
	\includegraphics[scale=0.25]{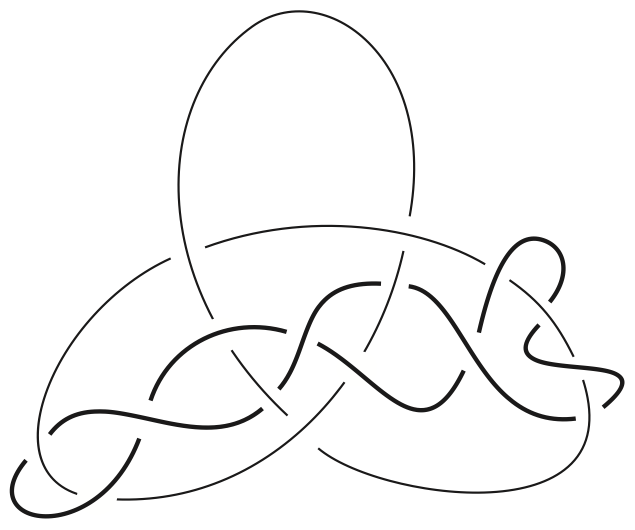}
	\caption{Link diagram of the trefoil $3_1$ with an unknot projection.}
	\label{figMotegi}
\end{figure}
\noindent The diagram can be deformed so that the unknot component appears without any crossing, and then we can push the knotted component further so that the unknot component shows itself as the boundary of a meridian of the solid torus.
\begin{figure}[H]
	\centering
	\includegraphics[scale=0.25]{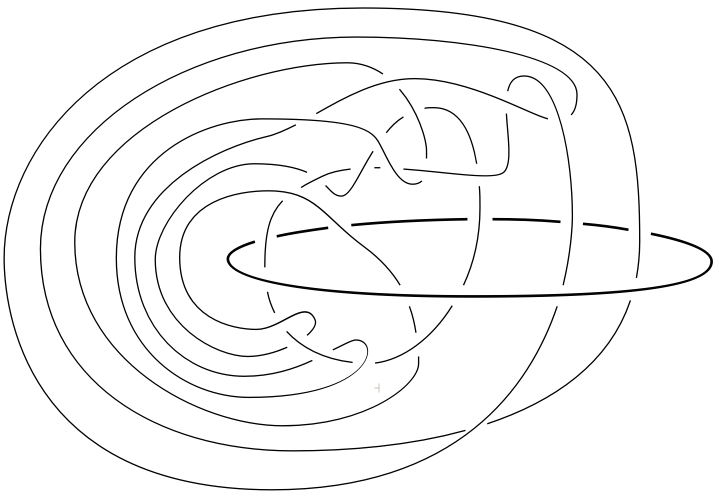}
	\caption{Link diagram with the unknot component shaped as a meridian of $ST$.}
	\label{figMot1}
\end{figure}
\noindent Let $D$ be the knotted component of the link diagram shown in \emph{Figure \ref{figMot1}}. In these conditions we can apply \emph{Method \ref{method_1}}---considering the center of the annulus on the left-hand side of the meridian---and \emph{Corollary \ref{corol_ec}} to extract the wrapping number of $D$, and using \emph{Proposition \ref{propWrap}} we get
\[w\subST(D)=5,\]
which, going back to the original framework, tells us that the geometrical intersection degree of the trefoil and the unknot projection is $5$.
\end{exmp}


\section{Adequacy}\label{secAdequacy}

In this section we will recall from other authors' works or prove if necessary all the results regarding the adequacy of links that we will use in the subsequent sections of this work. For starters, let us first recall what adequacy itself is.

Let $L$ be a link, and $D$ be an $n$-crossing diagram ($n\geq1$) of $L$ with crossings labelled: $c_1,c_2,...,c_n$. A \emph{state} of $D$ is a function $s:\{c_1,c_2,...,c_n\}\to\{-1,1\}$. The effect of applying this function on the $i^{th}$ crossing of $D$ is illustrated in \emph{Figure \ref{figState}}.
\begin{figure}[ht]
	\centering
	\includegraphics[scale=0.75]{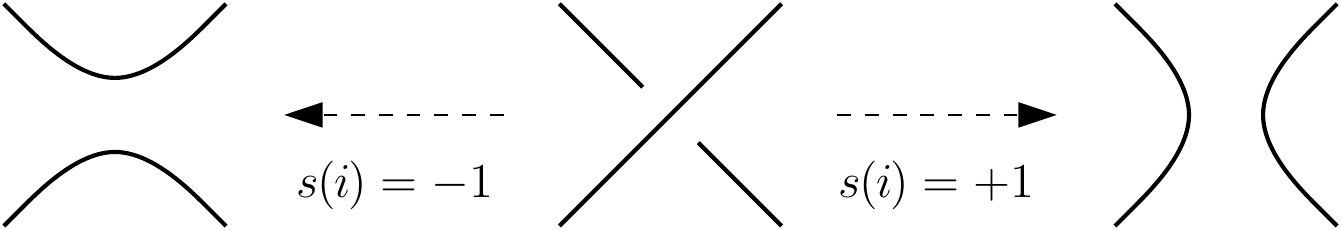}
	\caption{$i^{th}$ crossing of $D$.}
	\label{figState}
\end{figure}

When this function is applied to all crossings, the resulting $sD$ is a set of disjoint simple closed curves (\emph{circuits}) with no crossings. We denote the number of these circuits by $|sD|$.
The Kauffman bracket of a knot diagram $D$ can be expressed in these terms as
\[\kauf{D}=\sum_s\Big(A^{\sum_{i=1}^ns(i)}(-A^{-2}-A^2)^{|sD|-1}\Big),\]
where $s$ covers all possible states of $D$.

Let now $s_+$ and $s_-$ be the two extremal states that satisfy $\sum_{i=1}^ns_+(i)=n$ and $\sum_{i=1}^ns_-(i)=-n$. We call a diagram $D$ \emph{plus-adequate} if $|s_+D|>|sD|$ for all $s$ with $\sum_{i=1}^ns(i)=n-2$, and \emph{minus-adequate} if $|s_-D|>|sD|$ for all $s$ with $\sum_{i=1}^ns(i)=-n+2$. This means that, for plus-adequate, for every state taken which has all-but-one crossings positively split (and one negative), the number of circuits generated is strictly smaller than the number of circuits generated by cutting all crossings positively. Visually, this is to say that every crossing in $D$ was split in such a way that each side of the splitting belongs to a different circuit. Conversely, if a circuit abuts itself, it is not plus-adequate. The same holds for minus-adequate. 
If both conditions are satisfied at once $D$ is called \emph{adequate}, and any link having an adequate diagram will also be called adequate. Adequate links are remarkable for being, in particular, an extension of the alternating links family \cite{Li1,Li2} (alternating and reduced implies adequate). A link will be called adequate if there exists an adequate diagram for it. The trivial knot will not be considered adequate. \emph{Figures \ref{figAdeqs}}, \emph{\ref{figAdeqs2}} and \emph{\ref{figAdeqs3}} respectively show several alternating adequate prime knots, the first prime knots not to be adequate ($8_{19}$, $8_{20}$ and $8_{21}$) and the first prime knots to be adequate but not alternating ($10_{152}$, $10_{153}$ and $10_{154}$) \cite{Kh}.
\begin{figure}[ht]
	\centering
	\includegraphics[scale=0.15]{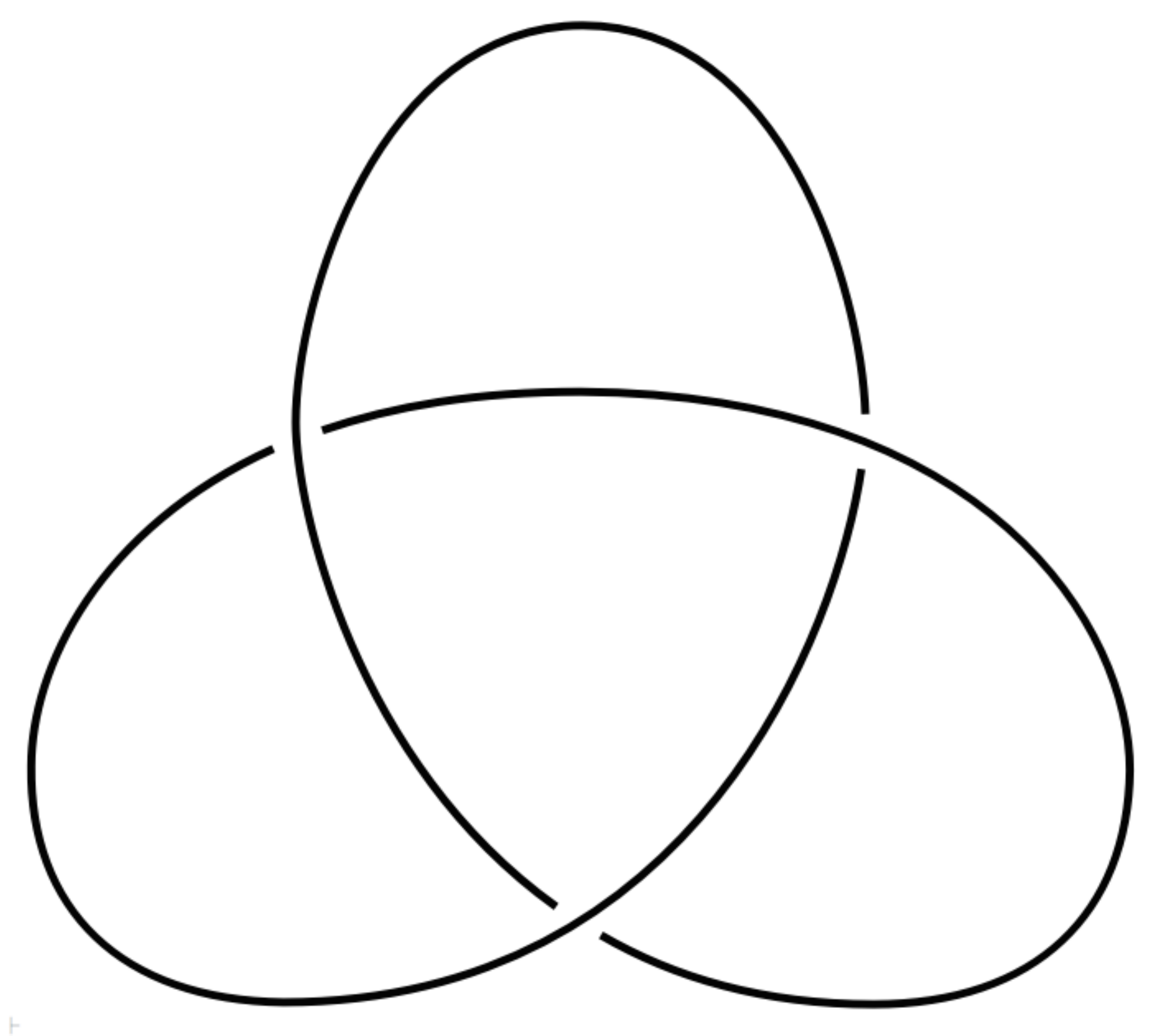}
	\spa{20}
	\includegraphics[scale=0.175]{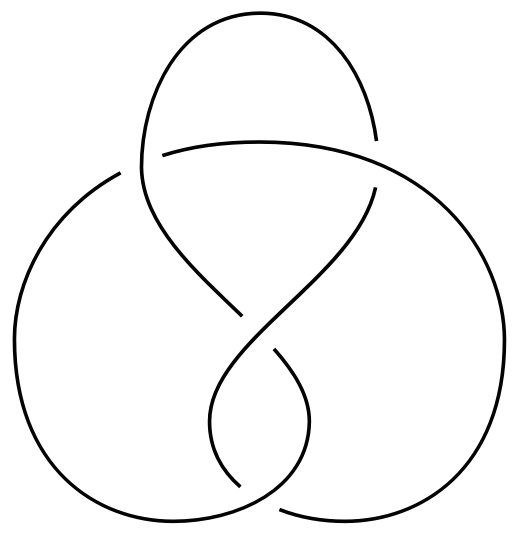}
	\spa{20}
	\includegraphics[scale=0.15]{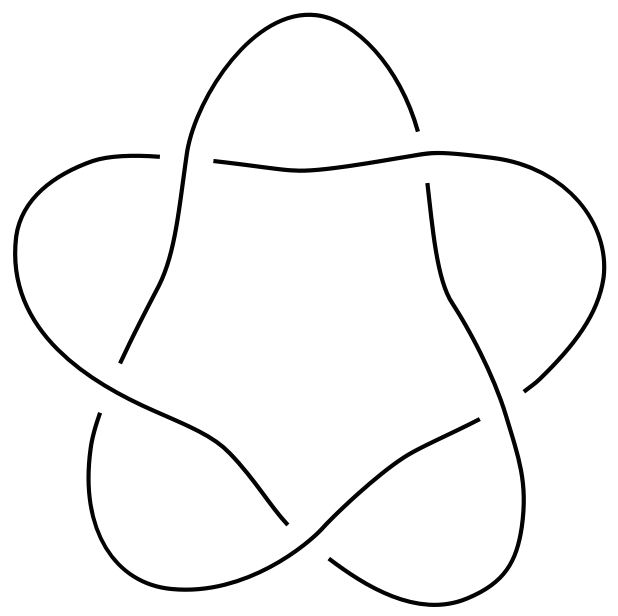}\\
	$\spa{4}3_1$\spa{20}\spa{20}\spa{14}$4_1$\spa{20}\spa{20}\spa{11}$5_1$
	\caption{First alternating adequate knots.}
	\label{figAdeqs}
\end{figure}
\begin{figure}[ht]
	\centering
	\includegraphics[scale=0.18]{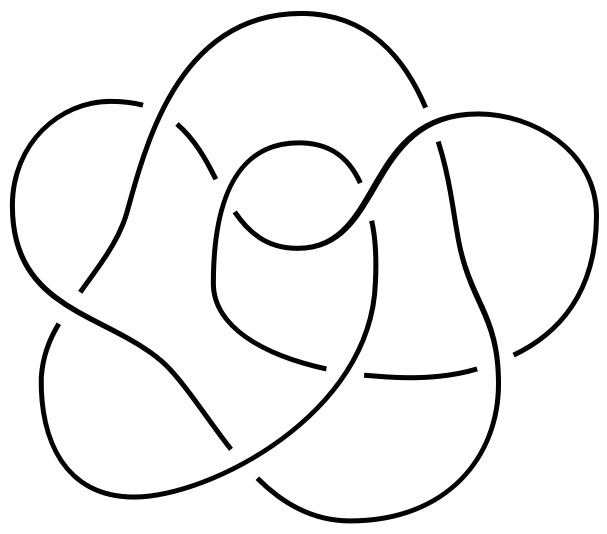}
	\spa{15}
	\includegraphics[scale=0.18]{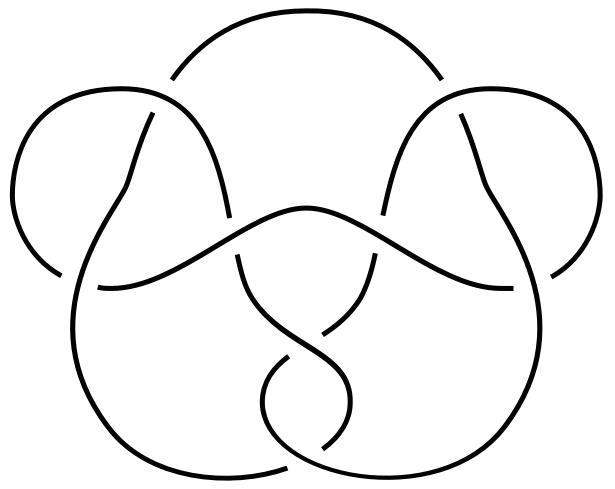}
	\spa{15}
	\includegraphics[scale=0.18]{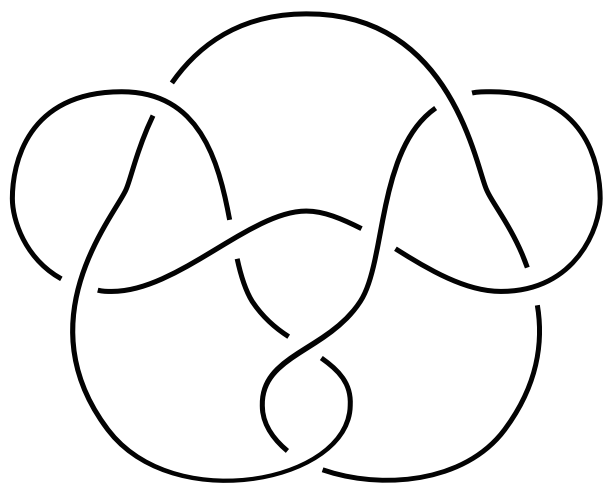}\\
	$8_{19}$\spa{20}\spa{20}\spa{12}$8_{20}$\spa{20}\spa{20}\spa{13}$8_{21}$
	\caption{First non-adequate knots.}
	\label{figAdeqs2}
\end{figure}
\begin{figure}[ht]
	\centering
	\includegraphics[scale=0.18]{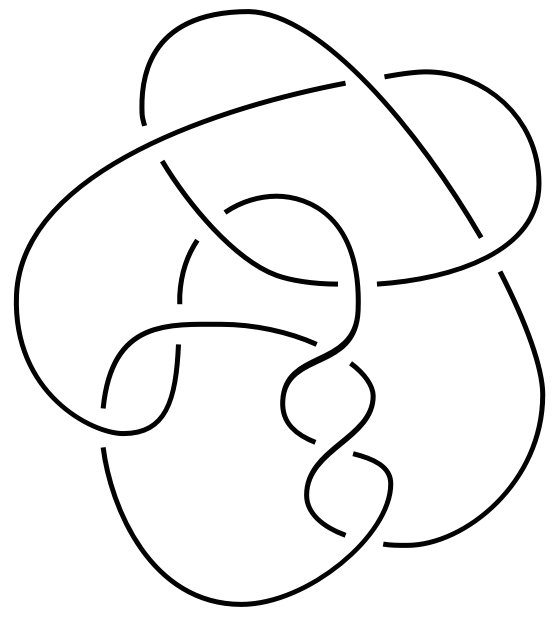}
	\spa{20}
	\includegraphics[scale=0.18]{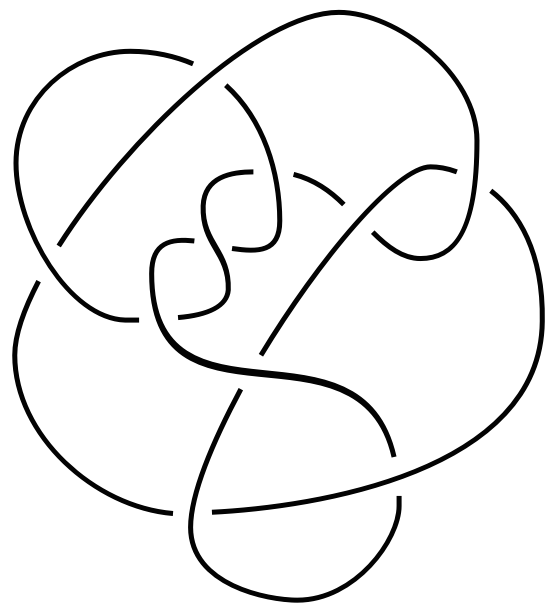}
	\spa{20}
	\includegraphics[scale=0.18]{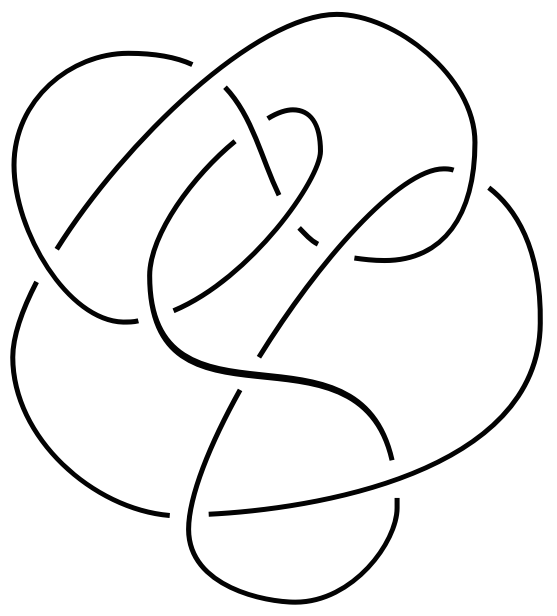}\\
	$\spa{4}10_{152}$\spa{20}\spa{20}\spa{7}$10_{153}$\spa{20}\spa{20}\spa{10}$10_{154}$
	\caption{First non-alternating adequate knots.}
	\label{figAdeqs3}
\end{figure}

We will now present several well known results. Some of them can be explicitly found in the literature \cite{Li1}, while others are usually assumed but nonetheless we will name and prove them here.

\begin{lmm}\label{lemmaLick4}
Let $D$ be a link diagram with $n$ crossings. Then
\begin{enumerate}[label=(\roman*)]
	\item $M_{\kauf{D}}\leq n+2|s_+D|-2$, with equality if $D$ is plus-adequate, and
	\item $m_{\kauf{D}}\geq -n-2|s_-D|+2$, with equality if $D$ is minus-adequate.
\end{enumerate}
\end{lmm}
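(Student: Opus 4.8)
The plan is to read the extreme $A$-degrees straight off the state-sum expression for $\kauf{D}$ and then use the adequacy hypotheses only to rule out cancellation in the extremal coefficients. Each state $s$ contributes the term $A^{\sum_{i=1}^n s(i)}(-A^{-2}-A^2)^{|sD|-1}$, and since $(-A^{-2}-A^2)^{|sD|-1}$ occupies the $A$-degrees from $-2(|sD|-1)$ up to $2(|sD|-1)$, this contribution has top $A$-degree $\sum_{i=1}^n s(i)+2|sD|-2$ and bottom $A$-degree $\sum_{i=1}^n s(i)-2|sD|+2$. So for part (i) I would maximise $f(s):=\sum_{i=1}^n s(i)+2|sD|$ over all states, and for part (ii) minimise $g(s):=\sum_{i=1}^n s(i)-2|sD|$; the quantities $f(s)-2$ and $g(s)+2$ are exactly the candidate top and bottom degrees.

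The key observation is the behaviour of $f$ under a single crossing-switch. If $s'$ is obtained from $s$ by changing one crossing from $+1$ to $-1$, then $\sum s'(i)=\sum s(i)-2$, while $|s'D|=|sD|\pm1$, since altering one smoothing either merges two circuits or splits one. Hence $f(s')=f(s)$ or $f(s')=f(s)-4$, and in either case $f(s')\leq f(s)$. Because every state can be carried to $s_+$ by a sequence of $(-1)\!\to\!(+1)$ switches, none of which decreases $f$, the maximum of $f$ is attained at $s_+$ with value $f(s_+)=n+2|s_+D|$. This gives $M_{\kauf{D}}\leq \max_s\big(f(s)-2\big)=n+2|s_+D|-2$. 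The bound in (ii) follows symmetrically: a $(+1)\!\to\!(-1)$ switch changes $g$ by $0$ or $-4$, so $g$ is minimised at $s_-$ with value $-n-2|s_-D|$, yielding $m_{\kauf{D}}\geq -n-2|s_-D|+2$.

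For the equality in (i) under plus-adequacy I would show that $s_+$ is the \emph{unique} maximiser of $f$, so the top-degree coefficient cannot be cancelled. Plus-adequacy states that $|s_+D|>|sD|$ for every $s$ with $\sum s(i)=n-2$, i.e.\ every single switch away from $s_+$ strictly lowers the circuit count; such a switch therefore forces $f$ to drop by exactly $4$ rather than stay fixed. Given any state $s\neq s_+$, I would order its switches from $s_+$ so that the first one lands on a crossing that is negative in $s$; that first switch drops $f$ by $4$, and by the monotonicity above no later switch can raise it again, so $f(s)\leq f(s_+)-4<f(s_+)$. Consequently only $s_+$ contributes to degree $n+2|s_+D|-2$, with coefficient the leading coefficient $(-1)^{|s_+D|-1}$ of $(-A^{-2}-A^2)^{|s_+D|-1}$, which is nonzero; hence $M_{\kauf{D}}=n+2|s_+D|-2$. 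Equality in (ii) is obtained identically, using minus-adequacy to force the first switch away from $s_-$ to raise $g$ strictly by $4$.

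The hard part will be precisely this no-cancellation step: the inequalities are immediate from the per-state degree bounds, but promoting them to equalities requires showing that no subextremal state ``catches up'' to the extremal degree. The monotonicity of $f$ under switches, combined with the strict drop forced by plus-adequacy at the very first switch, is exactly what secures uniqueness of the maximiser and hence the survival of the leading term; I would take care that the ordering-of-switches argument genuinely lets every nontrivial state's path begin with a single $(+1)\!\to\!(-1)$ switch to which the adequacy hypothesis applies.
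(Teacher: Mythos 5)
Your proof is correct and follows essentially the same route as the paper's: read the per-state extremal degrees off the state sum, show they are monotone non-increasing under single crossing switches away from $s_+$ (respectively $s_-$), and use adequacy to force a strict drop at the first switch so the extremal term survives cancellation. Your write-up is in fact somewhat more explicit than the paper's on the key no-cancellation step (uniqueness of the maximiser of $f$ and the nonvanishing leading coefficient $(-1)^{|s_+D|-1}$), but the underlying argument is identical.
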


Here and throughout this work we will use $\kauf{D}$ as usually to express the Kauffman bracket of $D$, and $M_p$ and $m_p$ will represent respectively the maximum and minimum exponents of the indeterminate of a Laurent polynomial $p$. In this work, this Laurent polynomial will always be either a Kauffman bracket or a Jones polynomial, therefore $p\in\mathbb{Z}[A^{-1},A]$ with indeterminate $A$ or $p\in\mathbb{Z}[t^{-1/2},t^{1/2}]$ with indeterminate $t$.

\begin{proof}
As previously pointed out, the Kauffman bracket of $D$ can be expressed as in the following formula:
\[\kauf{D}=\sum_s\Big(A^{\sum_{i=1}^ns(i)}(-A^{-2}-A^2)^{|sD|-1}\Big).\]
In particular, the part of that polynomial generated by the state $s_+$ is
\[\kauf{D|s_+}=A^{\sum_{i=1}^ns_+(i)}(-A^{-2}-A^2)^{|s_+D|-1}=A^{n}(-A^{-2}-A^2)^{|s_+D|-1},\]
and so $M_{\kauf{D|s_+}}=n+2|s_+D|-2$. Now let us consider $s_1$ a state that differs from $s_+$ in one crossing (cut negatively). For this state, the polynomial generated looks like
\[\kauf{D|s_1}=A^{\sum_{i=1}^ns_1(i)}(-A^{-2}-A^2)^{|s_1|-1}=A^{n-2}(-A^{-2}-A^2)^{|s_1|-1}.\]
Since only one crossing was cut differently from $s_+$, $\sum_{i=1}^ns_1(i)=n-2$. In general, we observe that for every crossing cut that differs from $s_+$, the sum $\sum_{i=1}^ns(i)$ decreases by $2$. This is, if $s_r$ is a state that differs in $r$ crossing cuts from $s_+$ the count for its state value will be $\sum_{i=1}^ns_r(i)=n-2r$, being the extreme case $s_-=s_n$ (all crossings cut differently) where $\sum_{i=1}^ns_-(i)=-n$. Now, watching carefully the amount of circuits generated in any $s_r$ and $s_{r+1}$, we appreciate that by just changing one crossing cut all circuits remain untouched except for the vicinity of a crossing which would either merge two circuits into one or divide one circuit into two. In other words, $|s_r|=|s_{r+1}|\pm1$.
\begin{figure}[ht]
	\centering
	\includegraphics[scale=0.5]{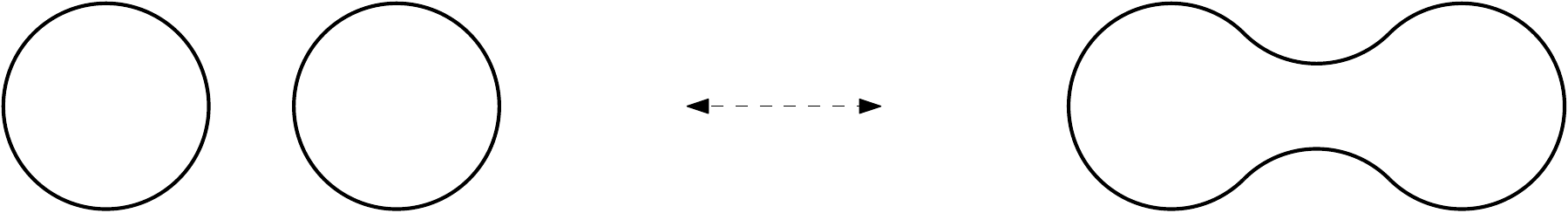}
\end{figure}

\noindent Hence, when considering the maximum exponent of their Kauffman brackets we get
\begin{align*}
M_{\kauf{D|s_r}}-M_{\kauf{D|s_{r+1}}}&=n-2r+2|s_rD|-2-(n-2(r+1)+2|s_{r+1}D|-2)\\
&=2+2(|s_rD|-|s_{r+1}D|)=2\pm2\geq0.
\end{align*}
In other words, the maximum exponent of the Kauffman bracket of $D$ never increases as we cut more crossings negatively. Lastly, if $D$ is adequate by definition we know that $M_{\kauf{D|s_+}}>M_{\kauf{D|s_1}}$, and so the contributing part of $\kauf{D|s_+}$ in the global Kauffman bracket will not be cancelled out and thus $M_{\kauf{D}}\leq n+2|s_+D|-2$, with equality if $D$ is plus-adequate as shown above. An analogous argument can be used for $m_{\kauf{D}}$.
\end{proof}

\begin{corol}\label{corolLick5}
If $D$ is an adequate diagram, then
\[M_{\kauf{D}}-m_{\kauf{D}}=2n+2|s_+D|+2|s_-D|-4.\]
\end{corol}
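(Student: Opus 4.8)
The plan is to derive this Corollary directly from Lemma \ref{lemmaLick4} by subtracting its two equality cases. Since $D$ is assumed adequate, it is by definition simultaneously plus-adequate and minus-adequate, so both parts of Lemma \ref{lemmaLick4} hold with equality. First I would invoke part (i) in its equality form, which gives $M_{\kauf{D}}=n+2|s_+D|-2$, recording the maximum exponent of the Kauffman bracket. Then I would invoke part (ii) in its equality form, which gives $m_{\kauf{D}}=-n-2|s_-D|+2$, recording the minimum exponent.

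The remaining step is a one-line computation: subtracting the second equality from the first yields
\[
M_{\kauf{D}}-m_{\kauf{D}}=\bigl(n+2|s_+D|-2\bigr)-\bigl(-n-2|s_-D|+2\bigr)=2n+2|s_+D|+2|s_-D|-4,
\]
which is precisely the claimed identity. No induction or case analysis is needed here, since all the delicate work---tracking how the maximal and minimal exponents behave as crossings are resplit, and verifying that the extremal contributions survive without cancellation---has already been carried out in the proof of Lemma \ref{lemmaLick4}.

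There is essentially no obstacle in this argument; the only point worth stating explicitly is the justification that adequacy supplies \emph{both} equality hypotheses at once. I would make sure to note that ``adequate'' was defined to mean plus-adequate and minus-adequate together, so that the two equality branches of Lemma \ref{lemmaLick4} apply to the same diagram $D$ simultaneously, and the breadth $M_{\kauf{D}}-m_{\kauf{D}}$ is therefore pinned down exactly rather than merely bounded.
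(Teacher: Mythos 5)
Your proof is correct and is exactly the intended derivation: the paper states this as an immediate corollary of Lemma \ref{lemmaLick4} (offering no separate proof), since adequacy supplies both equality branches simultaneously and the result follows by subtraction. Your explicit remark that ``adequate'' means plus- and minus-adequate at once is precisely the point that makes the corollary immediate.
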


For the following lemma, let us recall that a link diagram is called \emph{connected} if it forms a connected subset of the plane when drawn as a flat projection of the link --- without ``over'' and ``under'' crossings.

\begin{lmm}\label{lemmaLick6}
Let $D$ be a connected link diagram with $n$ crossings. Then
\[|s_+D|+|s_-D|\leq n+2.\]
If $D$ is alternating, the equality is attained.
\end{lmm}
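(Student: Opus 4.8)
The plan is to prove the inequality by induction on the number of crossings $n$, and to handle the equality case for alternating diagrams separately by a checkerboard-colouring argument. The base case $n=0$ is immediate: a connected crossingless diagram is a single circle, so $|s_+D|=|s_-D|=1$ and the asserted bound reads $2 = 0+2$.

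For the inductive step I would pick any crossing $c$ and smooth it, say in the $A$ direction, to obtain a diagram $D_A$ with $n-1$ crossings. The all-$A$ state of $D$ already smooths $c$ in the $A$ direction, so $|s_+D|=|s_+D_A|$ is unchanged; on the other hand the all-$B$ state of $D_A$ differs from that of $D$ only at $c$, and changing a single smoothing merges or splits exactly one circuit (as recalled in the proof of \emph{Lemma \ref{lemmaLick4}}), so $|s_-D_A|=|s_-D|\pm1$. Feeding $D_A$ into the inductive hypothesis, $|s_+D_A|+|s_-D_A|\leq n+1$, and tracking the $\pm1$ yields $|s_+D|+|s_-D|\leq n+2$ when the count decreases and the stronger $\leq n$ when it increases.

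The main obstacle is that smoothing $c$ can disconnect the diagram, which would make the inductive hypothesis inapplicable. I would resolve this with a local combinatorial observation: regard $D$ as a connected $4$-valent planar graph with $v$ the vertex at $c$, and note that deleting $v$ leaves four half-edges, the two smoothings of $c$ being precisely the two ways of pairing these into non-crossing pairs. A short case analysis on how the four half-edges distribute among the components of the graph minus $v$ shows that at least one of the two smoothings reconnects the pieces---and in particular, if the $A$-smoothing disconnects $D$, then the $B$-smoothing must keep it connected. I would therefore always choose the connectivity-preserving smoothing, swapping the roles of $s_+$ and $s_-$ when the $B$-smoothing is used; the bound and its proof are symmetric in these roles, so the induction survives. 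This half-edge partition argument is the heart of the proof. (Alternatively, one could obtain the same inequality at once from the Euler characteristic of the associated Turaev surface, whose cellulation has $n$ vertices, $2n$ edges, and $|s_+D|+|s_-D|$ faces, giving $|s_+D|+|s_-D|-n=\chi\leq 2$; I prefer the elementary induction as it stays closer to the state-sum language already in use.)

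For the equality statement I would checkerboard-colour the regions of the connected diagram on $S^2$. By Euler's formula the underlying $4$-valent graph has $n$ vertices and $2n$ edges, hence exactly $n+2$ complementary regions, partitioned into two colour classes. The key point is that the alternating condition forces every $A$-smoothing to merge two regions of one fixed colour and every $B$-smoothing to merge two regions of the other colour; consequently the circuits of $s_+D$ are in bijection with the regions of one colour and those of $s_-D$ with the regions of the other, so $|s_+D|+|s_-D|$ equals the total number of regions $n+2$. I expect the only delicate point here to be fixing the orientation convention that matches ``$A$-smoothing'' with a single colour class, which follows from inspecting one alternating crossing and then propagating the choice around the diagram using the alternating pattern.
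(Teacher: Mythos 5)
Your proposal is correct: the induction on the number of crossings (always smoothing at a crossing in the connectivity-preserving direction, which exists by your half-edge/parity case analysis, and swapping the roles of $s_+$ and $s_-$ as needed) together with the checkerboard-colouring count of regions for the alternating equality is precisely the classical argument. Note that the paper itself gives no proof of \emph{Lemma \ref{lemmaLick6}}---it is one of the results quoted directly from the literature \cite{Li1}---so your write-up supplies the proof the paper delegates to Lickorish's book, and it coincides with the standard one found there.
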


As in other works, we will use $B(p)$ to express the breadth of a Laurent polynomial $p$. More specifically, $B(p)=M_p-m_p$. As for the Jones polynomial of a knot $K\subset\mathbb{S}^3$ we will use the notation $J(K)$.

\begin{prop}\label{propBreadth}
Let $D$ be a link diagram of an oriented link $L$. Then
\[B(J(L))=\frac{B(\kauf{D})}{4}.\]
\end{prop}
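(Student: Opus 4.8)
The plan is to exploit the standard two-step passage from the Kauffman bracket to the Jones polynomial and to track how each step affects the exponents. Recall that for an oriented diagram $D$ with writhe $w(D)$, the normalized bracket
\[X(D)=(-A^3)^{-w(D)}\kauf{D}\]
is invariant under all three Reidemeister moves, hence is an invariant of the oriented link $L$, and the Jones polynomial is recovered by the substitution $A=t^{-1/4}$, i.e. $J(L)=X(D)\big|_{A=t^{-1/4}}$. The breadth of $J(L)$ will therefore be computed by following $\kauf{D}$ through these two operations.

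First I would argue that passing from $\kauf{D}$ to $X(D)$ does not change the breadth. Indeed, $(-A^3)^{-w(D)}=(-1)^{w(D)}A^{-3w(D)}$ is a unit monomial in $\mathbb{Z}[A^{-1},A]$: multiplying by it shifts every exponent of $A$ by the constant $-3w(D)$ and scales by a sign, so both $M$ and $m$ shift by the same amount and their difference is preserved. Hence $B(X(D))=B(\kauf{D})$ as Laurent polynomials in $A$.

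Next I would analyze the substitution $A=t^{-1/4}$. This sends a monomial $A^k$ to $t^{-k/4}$, so it is an order-reversing affine rescaling of the exponents: the exponent $k$ in $A$ becomes the exponent $-k/4$ in $t$. Consequently the largest $A$-exponent $M_{X(D)}$ becomes the smallest $t$-exponent $m_{J(L)}=-M_{X(D)}/4$, while the smallest $A$-exponent $m_{X(D)}$ becomes the largest $t$-exponent $M_{J(L)}=-m_{X(D)}/4$. Subtracting,
\[B(J(L))=M_{J(L)}-m_{J(L)}=\frac{-m_{X(D)}}{4}-\frac{-M_{X(D)}}{4}=\frac{M_{X(D)}-m_{X(D)}}{4}=\frac{B(X(D))}{4},\]
and combining with the previous step gives $B(J(L))=B(\kauf{D})/4$.

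The argument is essentially bookkeeping, and I do not expect a genuine obstacle; the only points that require care are purely formal: verifying that multiplication by the writhe factor is a unit monomial (so that no cancellation of extremal terms can occur and the breadth is genuinely unchanged), and handling the half-integer $t$-exponents correctly so that the substitution is seen to \emph{reverse} the order of exponents rather than merely rescale them. One may also remark, as a by-product, that since $B(J(L))$ is an invariant of $L$, the quantity $B(\kauf{D})$ is independent of the chosen diagram $D$, although this fact is not needed for the proof.
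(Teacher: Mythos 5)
Your proposal is correct and takes essentially the same route as the paper's own proof: the paper likewise observes that the writhe factor $(-A^{-3})^{w(D)}$ shifts all exponents uniformly (so it cancels in the breadth) and that the substitution $t^{-1/2}=A^2$ divides exponents by $4$ while reversing their sign, so that $M_{J(L)}$ arises from $m_{\kauf{D}}$ and $m_{J(L)}$ from $M_{\kauf{D}}$. Your splitting of the computation into two explicit steps (writhe normalization, then substitution) is only an organizational difference, not a different argument.
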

\begin{proof}
The Jones polynomial of an oriented link $L$ can be calculated from its diagram $D$ in terms of the Kauffman bracket of $D$ as
\[J(L)=(-A^{-3})^{w(D)}\kauf{D}\Big|_{t^{-1/2}=A^2},\]
where $w(D)$ is the \emph{writhe} of $D$.
Then, its breadth can also be easily calculated in terms of the breadth of the Kauffman bracket as shown beneath, taking good care of the breadth calculation change when the respective polynomials are $\kauf{D}\in\mathbb{Z}[A^{-1},A]$ and $J(L)\in\mathbb{Z}[t^{-1/2},t^{1/2}]$. For this purpose, we need to notice that when the substitution $t^{-1/2}=A^2$ is performed, the positive exponents of $\kauf{D}$ become $4$ times smaller, and negative; as well as the negative exponents become $4$ times smaller, and positive. Therefore, due to this change in sign, the maximum exponent of $B(J(L))$ will arise from $m_{\kauf{D}}$ and the minimum from $M_{\kauf{D}}$.
\begin{align*}
B(J(L))&=M_{J(L)}-m_{J(L)}=\Big[-3w(D)+m_{\kauf{D}}-(-3w(D)+M_{\kauf{D}})\Big]_{t^{-1/2}=A^2}\\
&=\Big[m_{\kauf{D}}-M_{\kauf{D}}\Big]_{t^{-1/2}=A^2}=\Big[m_{\kauf{D}}\Big]_{t^{-1/2}=A^2}-\Big[M_{\kauf{D}}\Big]_{t^{-1/2}=A^2}\\
&=\frac{-m_{\kauf{D}}}{4}-\frac{-M_{\kauf{D}}}{4}=\frac{M_{\kauf{D}}-m_{\kauf{D}}}{4}=\frac{B(\kauf{D})}{4}.
\end{align*}
\end{proof}

Lastly for this ``previous results reminder'', we present the result which will be the cornerstone for the forthcoming part of the work. This is a very well-known result.
\begin{thm}[\cite{Li1}, Theorem 5.9]\label{keythm}
Let $D$ be a connected, $n$-crossing diagram of an oriented link $L$. Then
\begin{enumerate}[label=(\roman*)]
	\item $B(J(L))\leq n$;
	\item if $D$ is alternating and reduced, then $B(J(L))=n$.
\end{enumerate}
\end{thm}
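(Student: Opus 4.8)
The plan is to deduce both parts directly from the machinery already assembled in this section, treating the Kauffman-bracket breadth as the intermediary between the combinatorics of the diagram and the Jones polynomial. By \emph{Proposition \ref{propBreadth}} we have $B(J(L))=B(\kauf{D})/4$, so it suffices to bound $B(\kauf{D})=M_{\kauf{D}}-m_{\kauf{D}}$ in terms of $n$. First I would apply \emph{Lemma \ref{lemmaLick4}}, which gives $M_{\kauf{D}}\leq n+2|s_+D|-2$ and $m_{\kauf{D}}\geq -n-2|s_-D|+2$; subtracting these yields
\[B(\kauf{D})=M_{\kauf{D}}-m_{\kauf{D}}\leq 2n+2|s_+D|+2|s_-D|-4.\]
At this point the only remaining input is control over the circuit counts $|s_+D|$ and $|s_-D|$, which is precisely what \emph{Lemma \ref{lemmaLick6}} supplies: for a connected diagram, $|s_+D|+|s_-D|\leq n+2$.

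Substituting the bound from \emph{Lemma \ref{lemmaLick6}} into the displayed inequality collapses everything cleanly:
\[B(\kauf{D})\leq 2n+2(n+2)-4=4n,\]
and dividing by $4$ gives $B(J(L))\leq n$, which is part (i). The argument is essentially a bookkeeping chain — each of the three prior results contributes exactly one inequality, and they telescope.

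For part (ii), I would observe that when $D$ is alternating and reduced, every inequality used above becomes an equality. Specifically, a reduced alternating diagram is adequate (as noted in the discussion preceding \emph{Figure \ref{figAdeqs}}: ``alternating and reduced implies adequate''), so \emph{Lemma \ref{lemmaLick4}} gives $M_{\kauf{D}}=n+2|s_+D|-2$ and $m_{\kauf{D}}=-n-2|s_-D|+2$ with equality; indeed \emph{Corollary \ref{corolLick5}} already records $B(\kauf{D})=2n+2|s_+D|+2|s_-D|-4$ in the adequate case. Simultaneously, the alternating hypothesis makes \emph{Lemma \ref{lemmaLick6}} an equality, $|s_+D|+|s_-D|=n+2$. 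Combining these two equalities yields $B(\kauf{D})=4n$ exactly, whence $B(J(L))=n$ by \emph{Proposition \ref{propBreadth}}.

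I do not anticipate a genuine obstacle here, since the theorem is a corollary of results already proved; the only point requiring care is the connectedness hypothesis, which is needed to invoke \emph{Lemma \ref{lemmaLick6}} and is assumed in the statement. The one subtlety worth flagging explicitly is the justification that a reduced alternating diagram is adequate — this is the hinge that lets part (ii) upgrade both \emph{Lemma \ref{lemmaLick4}} inequalities to equalities at once. If that implication were not already available, it would be the hard part; as it stands it is cited from the preceding exposition, so the proof reduces to assembling the equalities and performing the same arithmetic as in part (i).
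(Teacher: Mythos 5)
Your proposal is correct and follows exactly the route the paper itself indicates: part (i) by chaining \emph{Lemma \ref{lemmaLick4}}, \emph{Lemma \ref{lemmaLick6}} and \emph{Proposition \ref{propBreadth}}, and part (ii) by upgrading to equalities via the fact that reduced alternating diagrams are adequate. The paper only sketches this in a remark after the theorem statement, so your write-up is simply a fully detailed version of the same argument, with correct arithmetic throughout.
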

The first result is a direct consequence of \emph{Lemma \ref{lemmaLick4}}, \emph{Lemma \ref{lemmaLick6}} and \emph{Proposition \ref{propBreadth}}. In particular, we can extend the inequality to $B(J(L))\leq c(L)$ --- where $c(L)$ is the \emph{minimal crossing number} of $L$ --- since the result applies to all diagrams of $L$. The equality in \emph{(ii)} makes use of the fact that alternating and reduced is adequate.

With these results as background, we continue to introduce a couple of basic results which are closely related to these.

\begin{lmm}\label{lemmaGeq}
Let $D$ be an adequate diagram. Then
\begin{enumerate}[label=(\roman*)]
	\item $|s_+D|\geq 2$;
	\item $|s_-D|\geq 2$.
\end{enumerate}
\end{lmm}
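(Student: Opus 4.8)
The plan is to show that an adequate diagram $D$ must have at least two circuits in each of its extremal states $s_+$ and $s_-$. I would argue by contradiction, assuming $|s_+D| = 1$ (the case $|s_-D| = 1$ being symmetric). The key observation is the definition of plus-adequacy itself: for $D$ to be plus-adequate, every state $s$ with $\sum_{i=1}^n s(i) = n-2$ must satisfy $|s_+D| > |sD|$. Since each such $s$ differs from $s_+$ in exactly one crossing, and a single crossing change either merges two circuits into one or splits one into two (as established in the proof of \emph{Lemma \ref{lemmaLick4}}, where $|s_r| = |s_{r+1}| \pm 1$), the adequacy condition forces every single-crossing change to \emph{increase} the circuit count, i.e. to split a circuit rather than merge. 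This is the geometric content of plus-adequacy: each crossing, when split positively, has its two arcs lying on two distinct circuits.

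First I would handle the degenerate situation. If $|s_+D| = 1$, then the state $s_+D$ consists of a single circuit. Changing any one crossing from its positive to its negative split would have to produce $|sD| < |s_+D| = 1$, which is impossible since $|sD| \geq 1$ always (a state always yields at least one circuit). Thus the strict inequality $|s_+D| > |sD|$ in the definition of plus-adequacy cannot hold for \emph{any} single-crossing modification, contradicting plus-adequacy — unless $D$ has no crossings, but the definition requires $n \geq 1$. Hence $|s_+D| \geq 2$. The same reasoning applied to $s_-$ and the minus-adequacy condition yields $|s_-D| \geq 2$.

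I expect the main subtlety to be pinning down precisely why $|sD| \geq 1$ cannot be undercut, and making explicit that a single crossing change alters the circuit count by exactly $\pm 1$ rather than by more. The latter is already recorded in the proof of \emph{Lemma \ref{lemmaLick4}}, so I would simply invoke it: since $|sD| = |s_+D| \pm 1$ for any $s$ differing from $s_+$ in one crossing, if $|s_+D| = 1$ then either $|sD| = 0$ (impossible) or $|sD| = 2 > 1 = |s_+D|$, and the second case directly violates the defining inequality $|s_+D| > |sD|$ of plus-adequacy. Either way we reach a contradiction, so $|s_+D| \geq 2$. I do not anticipate any genuine obstacle here — the result is essentially an immediate unwinding of the adequacy definition together with the elementary fact that each circuit count is a positive integer and changes by one under a single crossing flip.
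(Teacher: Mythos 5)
Your proof is correct and is essentially the paper's own argument: both reduce to the observation that plus-adequacy gives $|s_+D| > |sD|$ for any state $s$ differing from $s_+$ in one crossing (which exists since $n \geq 1$), and $|sD| \geq 1$ always, forcing $|s_+D| \geq 2$. Your contradiction framing and the appeal to the $\pm 1$ circuit-count change are just cosmetic elaborations of the same direct inequality, so there is no substantive difference.
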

\begin{proof}
Since $D$ is adequate, by definition $|s_+D|>|sD|$ for all $s$ with $\sum_{i=1}^ns(i)=n-2$, assuming $D$ has $n$ crossings. Since $|sD|\geq 1$ for any state $s$, then $|s_+D|>1$. The same applies for $s_-D$.
\end{proof}

\begin{corol}
Let $D$ be an adequate diagram with $n$ crossings. Then
\begin{enumerate}[label=(\roman*)]
	\item $M_{\kauf{D}}\geq n+2$;
	\item $m_{\kauf{D}}\leq -n-2$.
\end{enumerate}
\end{corol}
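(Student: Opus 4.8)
The plan is to read off both inequalities directly from the two preceding results, since adequacy simultaneously supplies the equality cases of \emph{Lemma \ref{lemmaLick4}} and the lower bounds of \emph{Lemma \ref{lemmaGeq}}. In other words, the corollary is purely a matter of combining these two lemmas and performing the resulting arithmetic.

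First I would invoke the adequacy of $D$: by definition an adequate diagram is both plus-adequate and minus-adequate, so both equality cases in \emph{Lemma \ref{lemmaLick4}} hold, giving
\[M_{\kauf{D}}=n+2|s_+D|-2 \qquad\text{and}\qquad m_{\kauf{D}}=-n-2|s_-D|+2.\]
Next I would substitute the bounds $|s_+D|\geq 2$ and $|s_-D|\geq 2$ furnished by \emph{Lemma \ref{lemmaGeq}}. For part \emph{(i)} this yields $M_{\kauf{D}}=n+2|s_+D|-2\geq n+2\cdot 2-2=n+2$; for part \emph{(ii)} it yields $m_{\kauf{D}}=-n-2|s_-D|+2\leq -n-2\cdot 2+2=-n-2$, the direction of the second inequality being reversed precisely because $|s_-D|$ enters with a negative coefficient.

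There is no genuine obstacle here: the only point requiring care is ensuring one uses the \emph{equality} case (not merely the inequality) of \emph{Lemma \ref{lemmaLick4}}, which is exactly what full adequacy guarantees, and keeping track of the sign when converting the lower bound on $|s_-D|$ into an upper bound on $m_{\kauf{D}}$. Beyond that, the entire content of the corollary is the elementary substitution of $|s_\pm D|\geq 2$ into the breadth-endpoint formulas.
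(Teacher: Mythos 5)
Your proof is correct and takes essentially the same route as the paper: the paper also uses adequacy to assert that the extremal term coming from $s_+D$ (with exponent $n+2|s_+D|-2$) survives uncancelled, and then applies \emph{Lemma \ref{lemmaGeq}} to get $|s_\pm D|\geq 2$, yielding both bounds. The only difference is presentational---the paper writes the maximal term explicitly as $A^n(-A^{-2}-A^2)^{|s_+D|-1}$ rather than quoting the equality case of \emph{Lemma \ref{lemmaLick4}} as a formula, but the content is identical.
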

\begin{proof}
Since $D$ is adequate, the term with maximal exponent of $\kauf{D}$ derives from $s_+D$ and it does not get cancelled.
\[maximal(\kauf{D})=A^n(-A^{-2}-A^2)^{|s_+D|-1}\spa{5}\stackrel{\mathclap{\mbox{\normalfont\scriptsize\emph{Lemma \ref{lemmaGeq}}}}}{\geq}\spa{5}A^n(-A^{-2}-A^2)^1\implies M_{\kauf{D}}\geq n+2.\]
The analogous can be said about $m_{\kauf{D}}$.
\end{proof}


\section{Link parallels}
This section is mostly designed to extend the results in the previous section to the parallel version of links. Ultimately, we will be able to set a lower bound for the minimal crossing number of parallel links of adequate links. Let us first review what a parallel link is.

Let $D$ be a diagram of an link $L$. We call the \emph{$r$-parallel} of $D$ to the same diagram where each link component has been replaced by $r$ parallel copies of it, all preserving their ``over'' and ``under'' strands as in the original diagram, and write $D^r$. See \emph{Figure \ref{figParall}} for reference.
\begin{figure}[ht]
	\centering
	\includegraphics[scale=0.4]{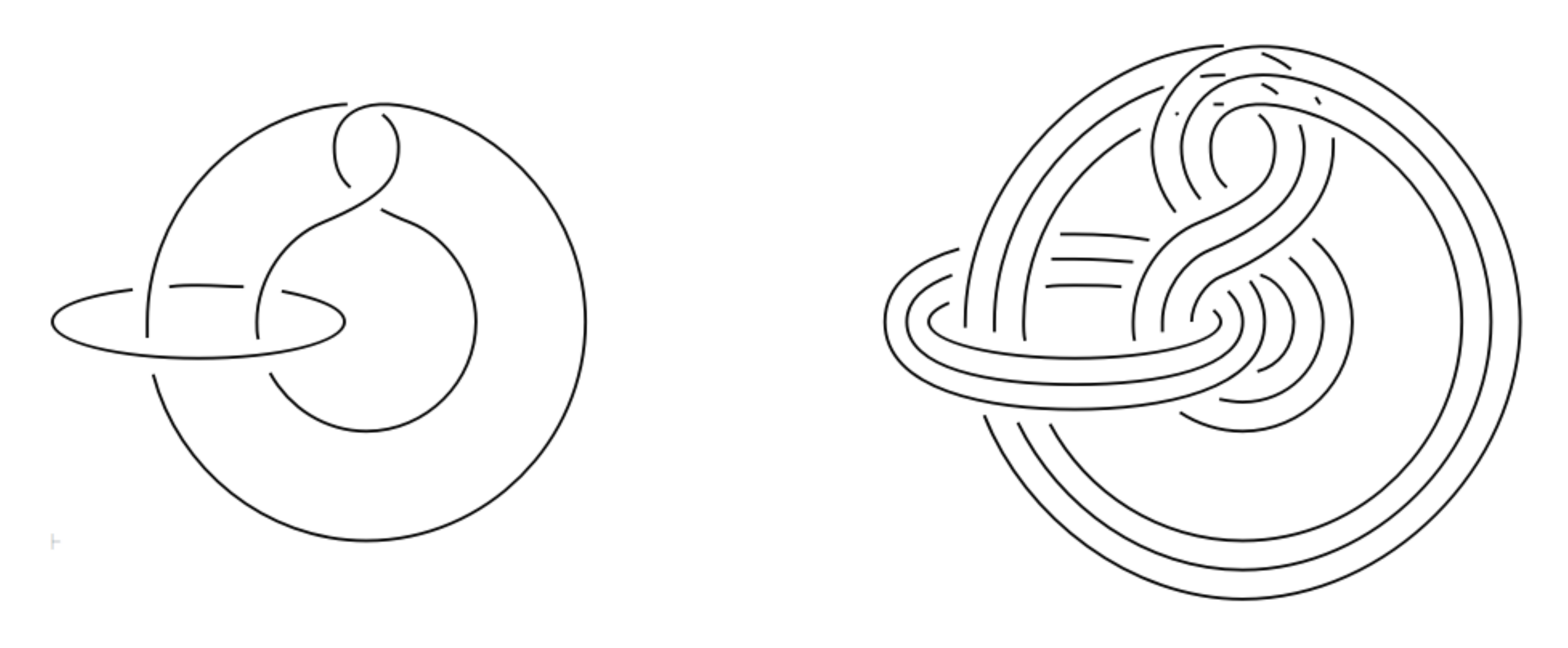}
	\caption{A link diagram and its $3$-parallel.}
	\label{figParall}
\end{figure}

It is important to observe that, by definition, if the original link is oriented, the parallel copies of it will also preserve its orientation. This will play an important roll in the next section of this work.

Sometimes we may refer directly to the $r$-parallel of the link $L$ and write $L^r$, instead of using a diagram of that link. In this case we will be referring to the $r$-parallel of whichever diagram of $L$ we decide to use --- we will only use this notation to express the Jones polynomial of the link, since the result does not depend on the chosen diagram.

Now, we are interested in knowing how properties from the last section change or are preserved when we consider the $r$-parallel of a diagram. Therefore, we first think about the behaviour of the adequacy of a diagram.
\begin{lmm}\label{lemmaParal}
Let $D$ be a link diagram. Then
\begin{enumerate}[label=(\roman*)]
	\item if $D$ is plus-adequate, $D^r$ is also plus-adequate, and
	\item if $D$ is minus-adequate, $D^r$ is also minus-adequate.
\end{enumerate}
\end{lmm}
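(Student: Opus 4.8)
The plan is to use the reformulation of adequacy recorded right after the definition: a diagram is plus-adequate precisely when, in the all-positive state $s_+D$, the two arcs created by splitting each crossing lie on two \emph{distinct} circuits (no circuit abuts itself), and minus-adequacy is the same statement for $s_-D$. I will prove (i) and then obtain (ii) by the verbatim argument applied to $s_-$, or by mirroring, since the $r$-parallel of the mirror image equals the mirror image of the $r$-parallel and mirroring interchanges plus- and minus-adequacy.

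First I would introduce the cabling projection $\pi\colon D^r\to D$ collapsing the $r$ parallel copies of each strand back to the original. Each crossing $c$ of $D$ then spawns an $r\times r$ block of crossings $c_{ij}$ ($1\le i,j\le r$) of $D^r$, and every crossing of $D^r$ sits in exactly one such block. The step that needs genuine work is the local claim that positive splitting commutes with cabling: resolving all $r^2$ crossings of a block by the positive rule reproduces $r$ nested parallel copies of the two arcs of $s_+D$ at $c$. For $r=2$ this is a direct trace (the eight block-endpoints close up into two families of nested arcs, with no leftover internal circle), and the same bookkeeping carries over to general $r$. What I actually need to extract is the following projection property: $\pi$ sends the crossingless $1$-manifold $s_+D^r$ onto $s_+D$, mapping each circuit of $s_+D^r$ onto a single circuit of $s_+D$, and at each $c_{ij}$ it carries the two positive-splitting arcs onto the two \emph{different} positive-splitting arcs of $s_+D$ at $c$.

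Granting this, the lemma drops out. Fix a crossing $c_{ij}$ of $D^r$ in the block over $c$, with positive-splitting arcs $\alpha_1,\alpha_2$ lying on circuits $\Gamma_1,\Gamma_2$ of $s_+D^r$. Then $\pi(\Gamma_1)$ and $\pi(\Gamma_2)$ are the circuits of $s_+D$ through $\pi(\alpha_1)$ and $\pi(\alpha_2)$, which are exactly the two arcs of $s_+D$ at $c$; these are distinct because $D$ is plus-adequate. Hence $\pi(\Gamma_1)\neq\pi(\Gamma_2)$, which forces $\Gamma_1\neq\Gamma_2$. So no circuit of $s_+D^r$ abuts itself at any crossing, i.e.\ $D^r$ is plus-adequate.

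I expect the main obstacle to be squarely the local commutation claim—checking that an entire positively-split $r\times r$ block is the cabled single-crossing smoothing, and in particular that the two arcs at each $c_{ij}$ descend to the two \emph{distinct} arcs at $c$. Once that is secured the remainder is a one-line projection argument; pleasantly, it is insensitive to whether stray internal circles appear inside a block, since any self-abutting circle would, upon projection, already contradict the plus-adequacy of $D$.
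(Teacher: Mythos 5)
Your opening observation---that positive splitting commutes with taking parallels, $s_+(D^r)=(s_+D)^r$, with no stray closed circles inside any block---is correct, and it is in fact the entire content of the paper's own proof (its ``cut \& parallel = parallel \& cut'' figure, ``extended to all crossings''). The genuine gap is the projection property you build on top of it: it is \emph{false} that at every crossing $c_{ij}$ of the block over $c$ the two positive-splitting arcs descend to the two \emph{distinct} splitting arcs of $s_+D$ at $c$. Do the $r=2$ trace you appeal to. Label the parallel strands of one branch at $c$ by $A_1,A_2$ and those of the other by $B_1,B_2$, so the block has four crossings $c_{ij}=A_i\cap B_j$, and write $\gamma,\gamma'$ for the two splitting arcs of $s_+D$ at $c$. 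Splitting all four crossings positively does produce two parallel copies of $\gamma$ and two of $\gamma'$, but they sit in the block unevenly: with the standard picture, one copy of $\gamma$ runs through three crossings, using one splitting arc at each of $c_{11}$, $c_{21}$, $c_{22}$, while the second copy of $\gamma$ passes through the block using only the remaining splitting arc of $c_{21}$ (and symmetrically the two copies of $\gamma'$ share the crossing $c_{12}$). Hence at the off-diagonal crossing $c_{21}$ both splitting arcs lie on parallel copies of the \emph{same} arc $\gamma$: they project to one and the same circuit of $s_+D$, so there $\pi(\Gamma_1)=\pi(\Gamma_2)$ and your implication ``$\pi(\Gamma_1)\neq\pi(\Gamma_2)$ forces $\Gamma_1\neq\Gamma_2$'' has nothing to act on. This is not an artifact of choosing the wrong projection: the two circuits of $s_+(D^r)$ through the arcs at $c_{21}$ really are parallel copies of a single circuit of $s_+D$, so any map respecting the parallel structure must identify their images.

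The lemma is still true, but the argument needs a case distinction, which is what the paper's terse proof is implicitly relying on. At a ``diagonal'' crossing $c_{ii}$ the two splitting arcs do lie on copies of $\gamma$ and $\gamma'$ respectively, and there plus-adequacy of $D$ guarantees that the underlying circuits of $s_+D$, hence their parallel copies, are distinct. At an off-diagonal crossing $c_{ij}$, $i\neq j$, the two splitting arcs lie on two \emph{different} parallel copies of one and the same circuit of $s_+D$; distinct parallel copies are disjoint closed curves, hence distinct circuits of $s_+(D^r)=(s_+D)^r$, and no adequacy is needed in this case at all. Together the two cases show that no circuit of $s_+(D^r)$ abuts itself, i.e.\ $D^r$ is plus-adequate; your mirror-image reduction then gives the minus statement. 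So your overall architecture coincides with the paper's and is sound, but the step you yourself flagged as the main obstacle is not merely unverified---as stated it fails at every off-diagonal crossing of every block, and it must be replaced by the dichotomy above.
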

\begin{proof}
It is enough to observe that $s_+(D^r)=(s_+D)^r$, as it can be extracted from \emph{Figure \ref{figParProof}}.

\begin{figure}[ht]
	\centering{}
	\includegraphics[scale=0.75]{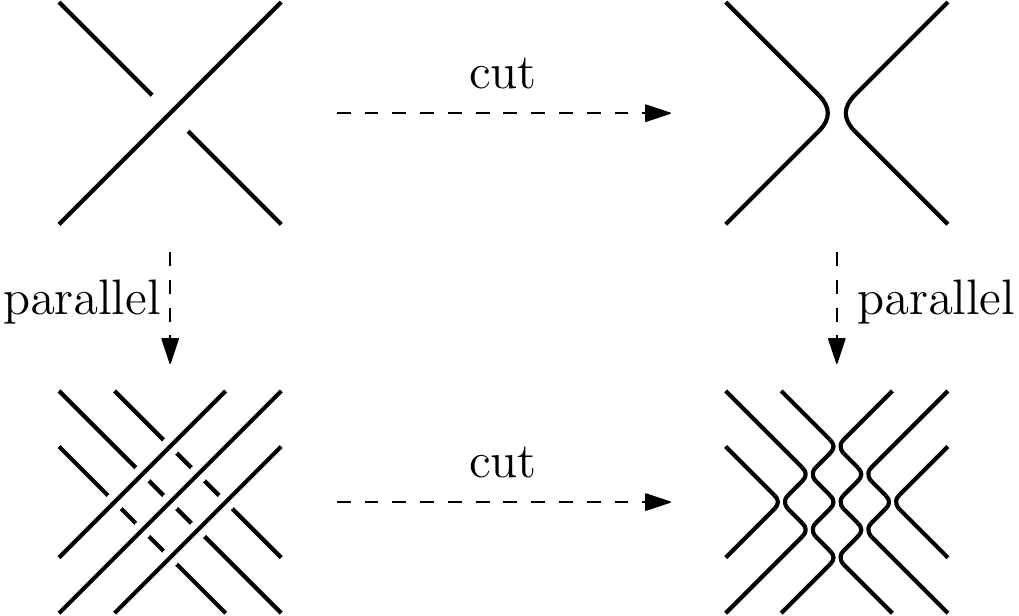}
	\caption{Cut \& parallel = parallel \& cut.}
	\label{figParProof}
\end{figure}

\noindent The same happens with $s_-(D^r)=(s_-D)^r$. This property extended to all crossings gives the formulated result.
\end{proof}

Of course, this implies that adequacy is preserved under the ``taking parallels'' action. In other words, if $D$ is adequate, so $D^r$ is. This is a very important fact, since now it allows us to extend the results in \emph{Section \ref{secAdequacy}} to the $r$-parallel of a diagram.

\begin{lmm}\label{lemmaKaufDr}
Let $D$ be a link diagram with $n$ crossings. Then
\begin{enumerate}[label=(\roman*)]
	\item $M_{\kauf{D^r}}\leq nr^2+2r|s_+D|-2$, with equality if $D$ is plus-adequate, and
	\item $m_{\kauf{D^r}}\geq -nr^2-2r|s_-D|+2$, with equality if $D$ is minus-adequate.
\end{enumerate}
\end{lmm}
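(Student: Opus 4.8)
The plan is to combine the structural facts established in Lemma \ref{lemmaLick4} with the combinatorics of what happens to the extremal states under the parallel operation, exactly as in the proof of Lemma \ref{lemmaParal}. The key observation is that $D^r$ has $nr^2$ crossings: each of the $n$ crossings of $D$ is replaced by a block where $r$ parallel strands cross $r$ parallel strands, producing $r^2$ crossings per original crossing. Applying Lemma \ref{lemmaLick4}(i) directly to the diagram $D^r$ (which has $nr^2$ crossings) gives
\[
M_{\kauf{D^r}}\leq nr^2+2|s_+(D^r)|-2,
\]
with equality when $D^r$ is plus-adequate. So the entire statement reduces to two separate claims: first, computing $|s_+(D^r)|$ in terms of $|s_+D|$, and second, transferring the adequacy hypothesis from $D$ to $D^r$.

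First I would establish that $|s_+(D^r)|=r|s_+D|$. This is the crux of the computation, and it follows from the identity $s_+(D^r)=(s_+D)^r$ proven in Lemma \ref{lemmaParal} (via \emph{Figure \ref{figParProof}}): cutting all crossings of $D^r$ positively yields exactly the $r$-parallel of the all-positive cut of $D$. Now $s_+D$ is a disjoint union of $|s_+D|$ simple closed curves, and taking the $r$-parallel of a single simple closed curve produces $r$ disjoint parallel copies of it. Since the curves of $s_+D$ are disjoint, their $r$-parallels remain disjoint from one another, so the total circuit count multiplies cleanly: $|(s_+D)^r|=r|s_+D|$. Substituting $|s_+(D^r)|=r|s_+D|$ into the bound above yields
\[
M_{\kauf{D^r}}\leq nr^2+2r|s_+D|-2,
\]
which is precisely \emph{(i)}. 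The argument for $m_{\kauf{D^r}}$ and $|s_-(D^r)|=r|s_-D|$ is entirely analogous, using Lemma \ref{lemmaLick4}(ii) and the identity $s_-(D^r)=(s_-D)^r$, giving \emph{(ii)}.

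For the equality statements, I would invoke Lemma \ref{lemmaParal}: if $D$ is plus-adequate then $D^r$ is plus-adequate, so the equality clause of Lemma \ref{lemmaLick4}(i) applies to $D^r$ and the inequality in \emph{(i)} becomes an equality; symmetrically for the minus-adequate case and \emph{(ii)}. The main obstacle, and the step deserving the most care, is the circuit-counting claim $|(s_+D)^r|=r|s_+D|$: one must be sure that parallelizing the crossings does not accidentally merge or split circuits in an unexpected way. The reassurance is that once all crossings are resolved, $s_+(D^r)$ is a \emph{crossingless} collection of curves, and taking $r$ disjoint parallel push-offs of a crossingless $1$-manifold is a purely topological operation that multiplies the number of components by exactly $r$; no interaction between the parallel strands can occur in the absence of crossings. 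This makes the count rigorous and completes the proof.
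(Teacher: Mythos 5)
Your proposal is correct and follows essentially the same route as the paper's own proof: apply \emph{Lemma \ref{lemmaLick4}} to $D^r$ (which has $nr^2$ crossings), use the identities $s_\pm(D^r)=(s_\pm D)^r$ from \emph{Lemma \ref{lemmaParal}} to get $|s_\pm(D^r)|=r|s_\pm D|$, and transfer the adequacy hypothesis from $D$ to $D^r$ via the same lemma. Your extra care in justifying the circuit count $|(s_\pm D)^r|=r|s_\pm D|$ is a welcome refinement of a step the paper treats as immediate.
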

\begin{proof}
It suffices to apply the result of \emph{Lemma \ref{lemmaLick4}} to the diagram $D^r$, where, by construction, every crossing of $D$ transforms into $r^2$ crossings.
\begin{figure}[ht]
	\centering{}
	\includegraphics[scale=0.75]{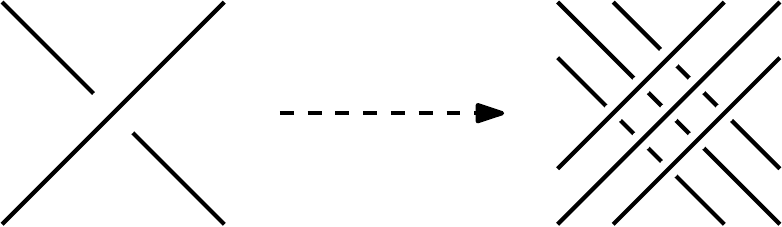}
\end{figure}

\noindent This would affect to the first term of the right-hand side of the inequality, where $n$ crossings for $D$ become $nr^2$ crossings for $D^r$. Next, we use the equalities $s_+(D^r)=(s_+D)^r$ and $s_-(D^r)=(s_-D)^r$ seen in \emph{Lemma \ref{lemmaParal}}, which imply that every circuit generated by the $s_+$ and $s_-$ actions gets parallelized $r$ times, and so $|s_+(D^r)|=r|s_+D|$ and $|s_-(D^r)|=r|s_-D|$. The conditions ``\emph{if $D$ is plus-/minus-adequate}'' for the equality instead of ``\emph{if $D^r$ is plus-/minus-adequate}'' are also consequence of \emph{Lemma \ref{lemmaParal}}, which makes them equivalent.
\end{proof}

\begin{corol}\label{corolKaufDr}
If $D$ is an adequate diagram, then
\[M_{\kauf{D^r}}-m_{\kauf{D^r}}=2nr^2+2r|s_+D|+2r|s_-D|-4.\]
\end{corol}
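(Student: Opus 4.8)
The plan is to derive this corollary as a direct analogue of \emph{Corollary \ref{corolLick5}}, simply feeding the parallel-diagram estimates of \emph{Lemma \ref{lemmaKaufDr}} into the computation of the breadth. Since $D$ is adequate, \emph{Lemma \ref{lemmaParal}} guarantees that $D^r$ is adequate as well, so both equalities in \emph{Lemma \ref{lemmaKaufDr}} hold simultaneously. This is the crucial observation: adequacy of $D$ upgrades the two inequalities to exact equalities for the extremal exponents of $\kauf{D^r}$.

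Concretely, I would first invoke part (i) of \emph{Lemma \ref{lemmaKaufDr}} to write $M_{\kauf{D^r}}=nr^2+2r|s_+D|-2$, and part (ii) to write $m_{\kauf{D^r}}=-nr^2-2r|s_-D|+2$. Then the breadth $M_{\kauf{D^r}}-m_{\kauf{D^r}}$ is computed by subtraction:
\[
M_{\kauf{D^r}}-m_{\kauf{D^r}}=\big(nr^2+2r|s_+D|-2\big)-\big(-nr^2-2r|s_-D|+2\big)=2nr^2+2r|s_+D|+2r|s_-D|-4.
\]
This is exactly the claimed identity, so the proof reduces to this one-line algebraic cancellation once the two equalities are in hand.

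There is essentially no obstacle here; the only point requiring care is the justification that \emph{both} equality cases of \emph{Lemma \ref{lemmaKaufDr}} apply at once. This is precisely where the hypothesis that $D$ is \emph{adequate} (rather than merely plus- or minus-adequate) is used: adequacy means $D$ is simultaneously plus- and minus-adequate, and by \emph{Lemma \ref{lemmaParal}} these properties pass to $D^r$, so neither extremal term gets cancelled in $\kauf{D^r}$. I would state this explicitly and then present the display above as the conclusion. The structure mirrors \emph{Corollary \ref{corolLick5}} verbatim, with $n\mapsto nr^2$ and $|s_\pm D|\mapsto r|s_\pm D|$, so the argument is a clean substitution rather than a new computation.
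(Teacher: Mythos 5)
Your proof is correct and is exactly the argument the paper intends: the corollary is an immediate consequence of the two equality cases of \emph{Lemma \ref{lemmaKaufDr}} (which hold simultaneously since $D$ is adequate), followed by the one-line subtraction, mirroring how \emph{Corollary \ref{corolLick5}} follows from \emph{Lemma \ref{lemmaLick4}}. Your appeal to \emph{Lemma \ref{lemmaParal}} is harmless but redundant, since \emph{Lemma \ref{lemmaKaufDr}} already states its equality conditions in terms of the adequacy of $D$ rather than of $D^r$.
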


\begin{exmp}
Let us reflect this result on the usual projection of the trefoil (which is adequate) and its $3$-parallel.
\begin{figure}[H]
	\centering{}
	\includegraphics[scale=0.2]{trefoil}
	\spa{15}
	\includegraphics[scale=0.2]{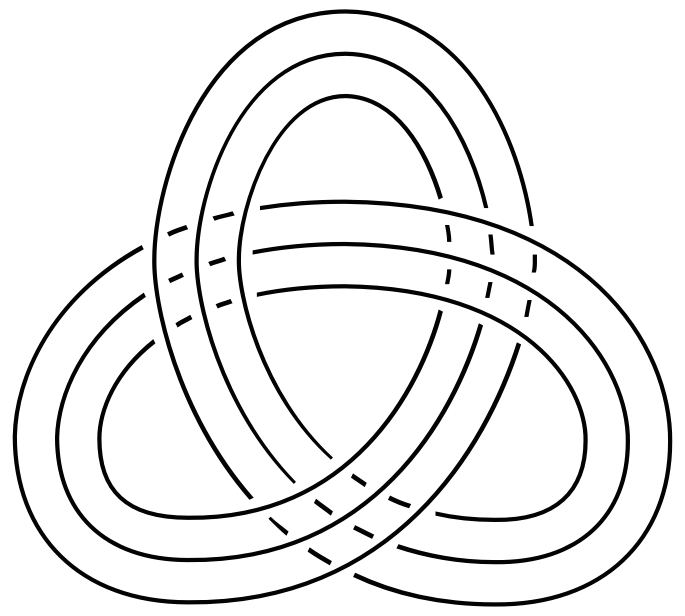}\\
	$\spa{3}3_1$\spa{20}\spa{20}\spa{20}\spa{3}$3_1^3$
\end{figure}
\noindent To begin with, we write down the Kauffman bracket of these knots as reference:
\begin{align*}
	\kauf{3_1}&=A^7+A^3-A^{-5}\textnormal{, and}\\
	\kauf{3_1^3}&=-A^{97}+A^{93}+A^{85}-A^{69}-3A^{61}+2A^{57}-A^{53}+A^{49}-A^{45}+A^{41}+A^{33}+A^{25}+A^{17}.
\end{align*}

\noindent Now, in order to apply \emph{Corollary \ref{corolLick5}} we first draw the $s_-$ and $s_+$ states of $3_1$:
\begin{figure}[H]
	\centering{}
	\includegraphics[scale=0.2]{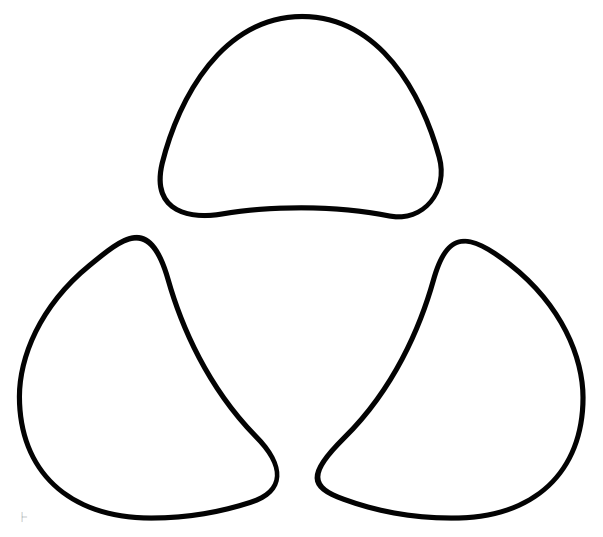}
	\spa{15}
	\includegraphics[scale=0.2]{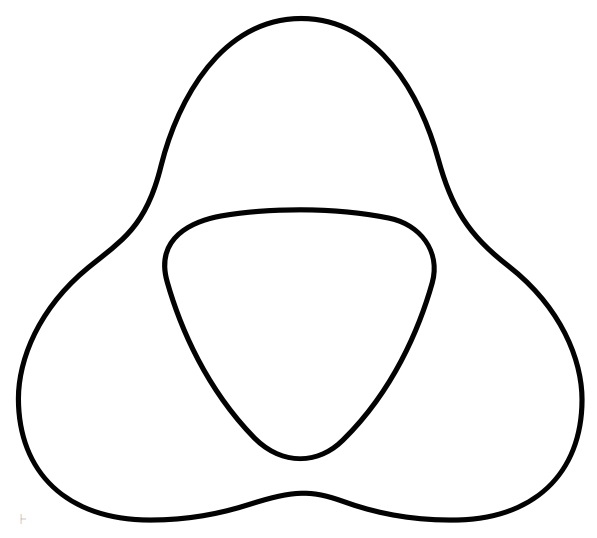}\\
	$\spa{3}s_+(3_1)$\spa{20}\spa{20}\spa{9}$s_-(3_1)$
\end{figure}

\noindent Applying the result of the corollary, we get that
\[B(\kauf{3_1})=2n+2|s_+3_1|+2|s_-3_1|-4=12,\]
\noindent which precisely equals to actual breadth of the Kauffman bracket of $3_1$. Using now the information of the number of circuits of the extreme states of $3_1$ and applying \emph{Corollary \ref{corolKaufDr}} to its $3$-parallel, we repeat the respective calculus with $3_1^3$.
\[B(\kauf{3_1^3})=2\cdot3^2n+2\cdot3|s_+3_1|+2\cdot3|s_-3_1|-4=80,\]
\noindent which, of course, is exactly the breadth of $\kauf{3_1^3}$. Having these results, we also know from \emph{Proposition \ref{propBreadth}} the breadth of the Jones polynomials of these knots:
\begin{align*}
	B(J(3_1))&=\frac{B(\kauf{3_1})}{4}=3\textnormal{, and}\\
	B(J(3_1^3))&=\frac{B(\kauf{3_1^3})}{4}=20.
\end{align*}

\noindent As we have just seen, these results prove to be valuable for calculating the breadth of the Kauffman bracket and Jones polynomial just by knowing some information about one adequate diagram.
\end{exmp}

Our final aim in this section is to set a lower bound for the minimal number of crossings of link parallels when the original knot is adequate. Making use of the results in this section, a lower bound can be set as shown in the following result.

\begin{thm}\label{thmCrossParal}
Let $L$ be an adequate oriented link and $r\in\mathbb{N}$. Then
\[c(L^r)\geq \frac{r^2}{2}c(L)+2r-1.\]
\end{thm}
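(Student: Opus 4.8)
The plan is to bound $c(L^r)$ from below through the breadth of the Jones polynomial of the parallel link---which is exactly what the extension of \emph{Theorem \ref{keythm}} licenses---and then to evaluate that breadth explicitly from an adequate diagram of $L$. So I would fix an adequate diagram $D$ of $L$ with $n$ crossings. By \emph{Lemma \ref{lemmaParal}} the $r$-parallel $D^r$ is again adequate, which is precisely the hypothesis needed for \emph{Corollary \ref{corolKaufDr}} to apply verbatim to $D^r$ and furnish a closed form for $B(\kauf{D^r})$ in terms of $n$, $|s_+D|$ and $|s_-D|$.

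First I would assemble the chain of (in)equalities. Since $L^r$ is an oriented link, the extension $B(J(L^r))\leq c(L^r)$ of \emph{Theorem \ref{keythm}} gives the starting inequality. Next, \emph{Proposition \ref{propBreadth}} applied to the diagram $D^r$ gives $B(J(L^r))=\tfrac14 B(\kauf{D^r})$, and \emph{Corollary \ref{corolKaufDr}} evaluates the right-hand side. Substituting produces
\[c(L^r)\geq \frac{2nr^2+2r|s_+D|+2r|s_-D|-4}{4}=\frac{nr^2}{2}+\frac{r\big(|s_+D|+|s_-D|\big)}{2}-1.\]

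To finish I would replace the two free quantities by crude bounds. The crucial---and perhaps counterintuitive---observation is that I do \emph{not} need the deep fact that adequate diagrams realize the crossing number; I only need $n\geq c(L)$, which is immediate because $c(L)$ is the minimum over all diagrams while $n$ merely counts the crossings of the particular diagram $D$. As $r^2\geq 0$, this yields $\tfrac{nr^2}{2}\geq \tfrac{r^2}{2}c(L)$, so passing from $n$ to $c(L)$ only weakens the lower bound, which is harmless. For the middle term I would invoke \emph{Lemma \ref{lemmaGeq}}, giving $|s_+D|\geq 2$ and $|s_-D|\geq 2$, hence $|s_+D|+|s_-D|\geq 4$ and $\tfrac{r}{2}\big(|s_+D|+|s_-D|\big)\geq 2r$. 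Combining the two estimates gives $c(L^r)\geq \tfrac{r^2}{2}c(L)+2r-1$, as claimed.

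I expect no serious analytic obstacle here; the only genuine subtlety is hygienic rather than hard. One must check that \emph{Corollary \ref{corolKaufDr}} is legitimately applicable---i.e. that adequacy is inherited by $D^r$, which is supplied by \emph{Lemma \ref{lemmaParal}}---and, above all, that the inequality being used is $n\geq c(L)$ and not its reverse, so that enlarging $n$ enlarges rather than shrinks the bound. Mistaking that direction, or reflexively attempting to prove the much harder statement $n=c(L)$, is the main trap to avoid.
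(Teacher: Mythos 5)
Your proposal is correct and follows essentially the same route as the paper's own proof: bound $c(L^r)$ below by $B(J(L^r))$ via \emph{Theorem \ref{keythm}}, evaluate that breadth on an adequate diagram $D$ using \emph{Proposition \ref{propBreadth}} and \emph{Corollary \ref{corolKaufDr}}, then apply $|s_\pm D|\geq 2$ from \emph{Lemma \ref{lemmaGeq}} and $n\geq c(L)$. Your explicit attention to the direction of the inequality $n\geq c(L)$ and to the applicability of \emph{Corollary \ref{corolKaufDr}} via \emph{Lemma \ref{lemmaParal}} matches the (more tersely stated) reasoning in the paper.
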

\begin{proof}
By \emph{Theorem \ref{keythm}} we know that $c(L^r)\geq B(J(L^r))$. Since $L$ is an adequate link, let $D$ be an adequate diagram of $L$ with $n$ crossings. For this particular $D$,
\[c(L^r)\geq B(J(L^r))=\frac{B(\kauf{D^r})}{4}=\frac{2nr^2+2r|s_+D|+2r|s_-D|-4}{4}.\]
This is due to \emph{Corollary \ref{corolKaufDr}}. We now recall \emph{Lemma \ref{lemmaGeq}}, which set a lower bound both for $|s_+D|$ and $|s_-D|$. Using this,
\[c(L^r)\geq \frac{2nr^2+4r+4r-4}{4}=\frac{r^2}{2}n+2r-1\geq \frac{r^2}{2}c(L)+2r-1,\]
this last inequality holding because $c(L)$ is minimal.
\end{proof}

This lower bound is as good as we can prove it for general adequate knots. A tighter bound would require in turn a tighter lower bound for $|s_+D|$ and $|s_-D|$, but for any general adequate diagram of a link it is not possible to say any better than $|s_{\pm}D|\geq 2$.
\begin{corol}\label{corolBeau}
In particular, if $r>1$ these strict inequalities hold:
\begin{enumerate}[label=(\roman*)]
	\item $c(L^r)> \frac{r^2}{2}\cdot c(L)$, and
	\item $c(L^r)> r\cdot c(L)$.
\end{enumerate}
\end{corol}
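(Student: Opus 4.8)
The plan is to derive both strict inequalities directly from \emph{Theorem \ref{thmCrossParal}}, which already supplies the quantitative bound $c(L^r)\geq \frac{r^2}{2}c(L)+2r-1$. The whole content of the corollary is the observation that the additive slack term $2r-1$ is strictly positive whenever $r>1$, and that this same term is large enough to bridge the gap between $\frac{r^2}{2}c(L)$ and $r\cdot c(L)$. So nothing new needs to be estimated; I would only unpack the theorem in two different ways.

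For part (i), I would note that since $r\in\mathbb{N}$ and $r>1$ we have $r\geq 2$, hence $2r-1\geq 3>0$. Substituting into the theorem gives
\[c(L^r)\geq \frac{r^2}{2}c(L)+2r-1>\frac{r^2}{2}c(L),\]
which is precisely (i). For part (ii), the idea is to compare the right-hand side of the theorem against $r\cdot c(L)$ by forming their difference
\[\left(\frac{r^2}{2}c(L)+2r-1\right)-r\cdot c(L)=\frac{r(r-2)}{2}c(L)+(2r-1).\]
For $r\geq 2$ the factor $r(r-2)$ is nonnegative and $c(L)\geq 0$, so the first summand is nonnegative, while $2r-1\geq 3>0$; therefore the difference is strictly positive. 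Chaining this with \emph{Theorem \ref{thmCrossParal}} yields $c(L^r)\geq \frac{r^2}{2}c(L)+2r-1>r\cdot c(L)$, establishing (ii).

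The main obstacle is essentially nonexistent: this is pure bookkeeping with the bound already proved. The only point worth a moment's care is interpreting the hypothesis $r>1$ over $\mathbb{N}$ as $r\geq 2$, so that the crossover factor $r(r-2)$ in (ii) does not become negative; once $r\geq 2$ is in hand, the strict positivity of $2r-1$ does all the work in both parts. (It is worth remarking that $L$ adequate forces $c(L)\geq 1$, but part (ii) does not even need this, since the summand $\frac{r(r-2)}{2}c(L)$ is nonnegative regardless and the term $2r-1$ alone guarantees strictness.)
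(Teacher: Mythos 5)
Your proof is correct and follows the same route as the paper, which simply notes that both inequalities are direct consequences of the bound in \emph{Theorem \ref{thmCrossParal}}; your version just spells out the bookkeeping (the positivity of $2r-1$ and the nonnegativity of $\frac{r(r-2)}{2}c(L)$ for $r\geq 2$) that the paper leaves implicit.
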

\begin{proof}
The results are direct from the inequality in \emph{Theorem \ref{thmCrossParal}}.
\end{proof}
This theorem can be further tuned when $L$ is an alternating link.
\begin{corol}\label{corolAlt}
If $L$ is alternating
\[c(L^r)\geq \frac{r(r+1)}{2}c(L)+r-1.\]
\end{corol}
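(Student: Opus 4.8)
The plan is to re-run the proof of \emph{Theorem \ref{thmCrossParal}} almost verbatim, and to improve it at exactly the one place where that theorem throws away information. The only source of slack in \emph{Theorem \ref{thmCrossParal}} is the step that replaces $|s_+D|$ and $|s_-D|$ by their crude lower bound $2$ coming from \emph{Lemma \ref{lemmaGeq}}. When $L$ is alternating we can instead determine the sum $|s_+D|+|s_-D|$ \emph{exactly}, and it is precisely this sharper count that upgrades the leading coefficient from $\frac{r^2}{2}$ to $\frac{r(r+1)}{2}$.

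First I would fix $D$ to be a connected, reduced, alternating diagram of $L$ with $n$ crossings; such a diagram exists because $L$ is alternating, and ``reduced alternating implies adequate'' (as recalled in \emph{Section \ref{secAdequacy}}) guarantees that $D$ is adequate, which is exactly the hypothesis needed to invoke \emph{Corollary \ref{corolKaufDr}}. Because $D$ is connected and alternating, the equality case of \emph{Lemma \ref{lemmaLick6}} applies and gives $|s_+D|+|s_-D|=n+2$. Combining \emph{Theorem \ref{keythm}}, \emph{Proposition \ref{propBreadth}} applied to $D^r$, and \emph{Corollary \ref{corolKaufDr}}, I obtain
\[c(L^r)\geq B(J(L^r))=\frac{B(\kauf{D^r})}{4}=\frac{2nr^2+2r\big(|s_+D|+|s_-D|\big)-4}{4}.\]
Substituting $|s_+D|+|s_-D|=n+2$ collapses the numerator to $2nr^2+2rn+4r-4$, which simplifies to $\frac{r(r+1)}{2}\,n+(r-1)$. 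Finally, since every diagram of $L$ has at least $c(L)$ crossings we have $n\geq c(L)$, and as the coefficient $\frac{r(r+1)}{2}$ is positive this yields $c(L^r)\geq\frac{r(r+1)}{2}c(L)+r-1$, as desired.

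The single conceptual ingredient beyond \emph{Theorem \ref{thmCrossParal}}, and therefore the only point requiring care, is ensuring that one diagram is \emph{simultaneously} alternating (so that the equality in \emph{Lemma \ref{lemmaLick6}} holds) and adequate (so that \emph{Corollary \ref{corolKaufDr}} holds); both are secured by choosing a reduced alternating diagram and appealing to the stated implication ``reduced alternating $\Rightarrow$ adequate''. Everything after that is the same elementary simplification performed at the end of \emph{Theorem \ref{thmCrossParal}}. I would also emphasize that no appeal to the minimality of reduced alternating diagrams (the Tait conjecture) is needed here: the inequality $n\geq c(L)$ already points in the correct direction, so it suffices that $D$ is \emph{some} connected reduced alternating diagram rather than a minimal one.
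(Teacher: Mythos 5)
Your proof is correct and follows essentially the same route as the paper's: the paper likewise proves this corollary by substituting the equality $|s_+D|+|s_-D|=n+2$ from \emph{Lemma \ref{lemmaLick6}} into the argument of \emph{Theorem \ref{thmCrossParal}}. Your write-up is simply a more careful expansion, and your explicit observation that one must choose a connected, reduced alternating diagram so that $D$ is simultaneously alternating and adequate is a worthwhile detail the paper leaves implicit.
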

\begin{proof}
Since $D$ is alternating, from \emph{Lemma \ref{lemmaLick6}} we know that $|s_+D|+|s_-D|=n+2$. Using this in the proof \emph{Theorem \ref{thmCrossParal}} gives us the desired result.
\end{proof}
Please note that although one could think that this should be an equality because the equality of \emph{Theorem \ref{keythm}} holds for alternating links, the equality is not assured to be accomplished since $L^r$ is never an alternating link, even if $L$ is.

\section{Satellite knots}

In this last section we prove the main result of this work, which has been already described in the introduction. Nonetheless, this result came after multiple efforts in trying to understand the structure and construction of satellite knots, and the precise calculation of their Kauffman bracket and Jones polynomial. This is why we will first give some results on specific cases of satellites constructions, and prove the main theorem in the latter part of this section.

First, let us define the notation that we will be using. Let $P\subset ST$ (\emph{pattern}) be a knot in the solid torus, and $C\subset \mathbb{S}^3$ (\emph{companion}) be a usual knot. We then call the \emph{satellite knot} of $P$ and $C$ and write $Sat(P,C)$ to the result of faithfully embedding $ST$ (where $P$ lives) into a tubular neighborhood of $C$. Here ``faithfully'' means that a longitude of $ST$ is mapped onto a longitude of $C$.

The construction of satellite knots can be regarded as a result of ``grafting'' the pattern $P$ with a cable of $C$, which in turn can be created by ``grafting'' a parallel of $C$ with a torus link. For precise referral to this term, we define here the \emph{grafting composition} of two knots. This composition is \underline{not} a new concept in knot theory, but rather its branding, for we will be using this construction in several occasions. Let $R$ (\emph{rootstock}) and $S$ (\emph{scion}) be two knots in $ST$ with positive \emph{wrapping number} (minimal number of windings around the center of $ST$). We then put all crossings of each knot inside a \emph{hot zone} \cite{JP} to leave just the neighborhood of a meridian usable and perform a switch of $k$ strands as shown in \emph{Figure \ref{figSwitch}}.
\begin{figure}[ht]
	\centering{}
	\includegraphics[scale=0.8]{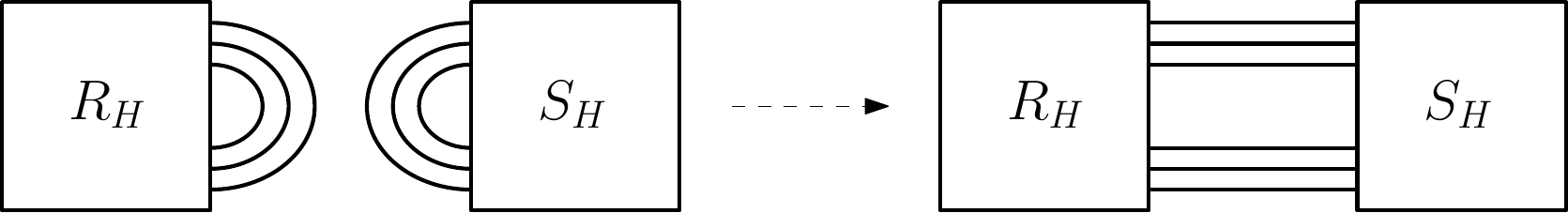}
	\caption{Switch of $3$ strands between hot zones.}
	\label{figSwitch}
\end{figure}

We call the resulting knot the \emph{graft} of $R$ and $S$, and write $R\Cup_k S$ where $k$ is the number of switched strands, which we will call \emph{graft index} or \emph{grafting index}. The next example will better illustrate this process with concrete knots.

\begin{exmp}
Consider the knots $R=3_1^3$ ($3$-parallel of the trefoil) and $S=L(1,2)$ ($(1,2)$-\emph{lasso}), both inside $ST$ with wrapping number $3$, as shown in \emph{Figure \ref{figTorus}}. `` \includegraphics[scale=0.3]{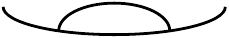} '' stands for the hole of the solid torus.
\begin{figure}[H]
	\centering{}
	\includegraphics[scale=0.2]{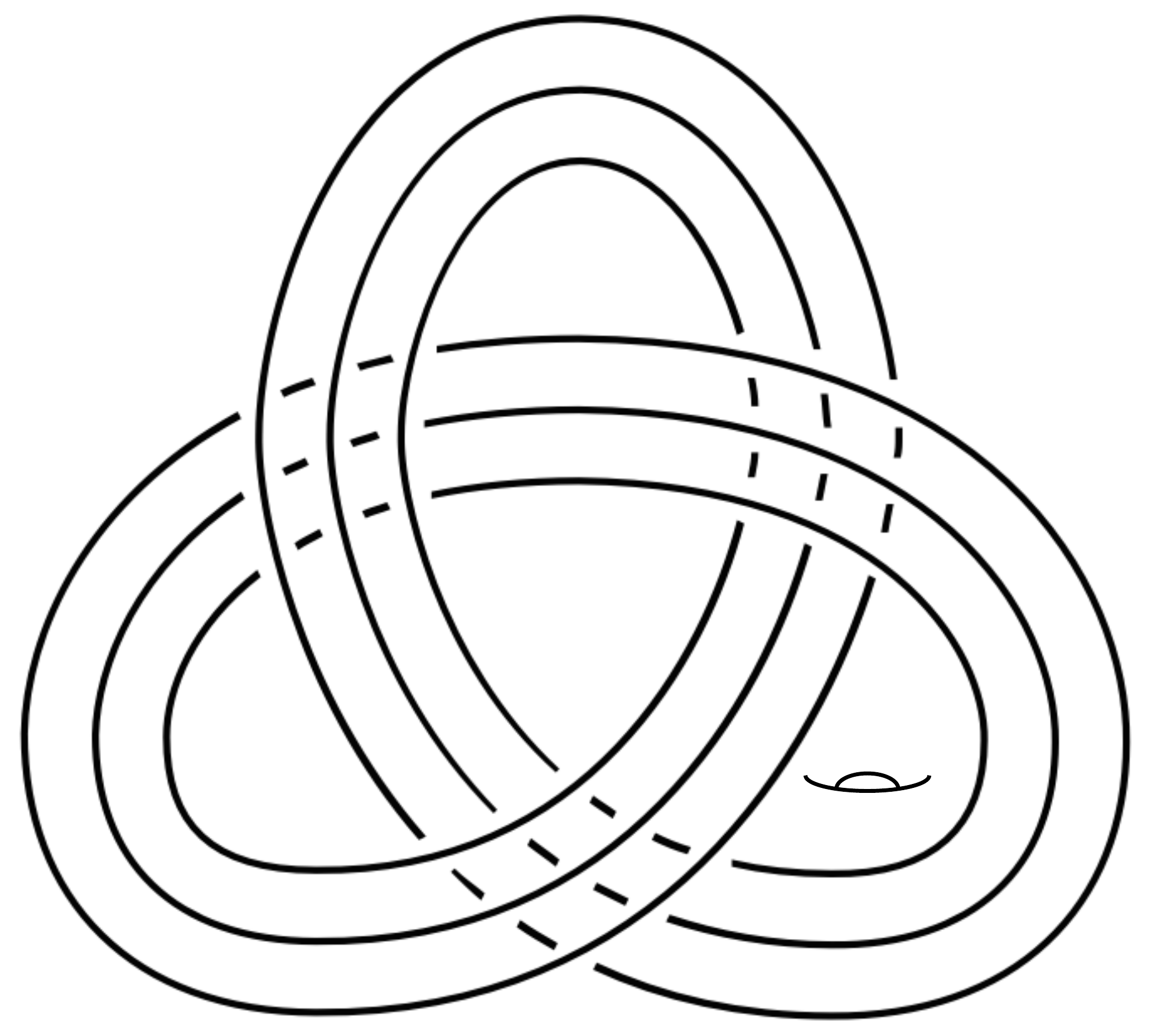}
	\spa{20}
	\includegraphics[scale=0.2]{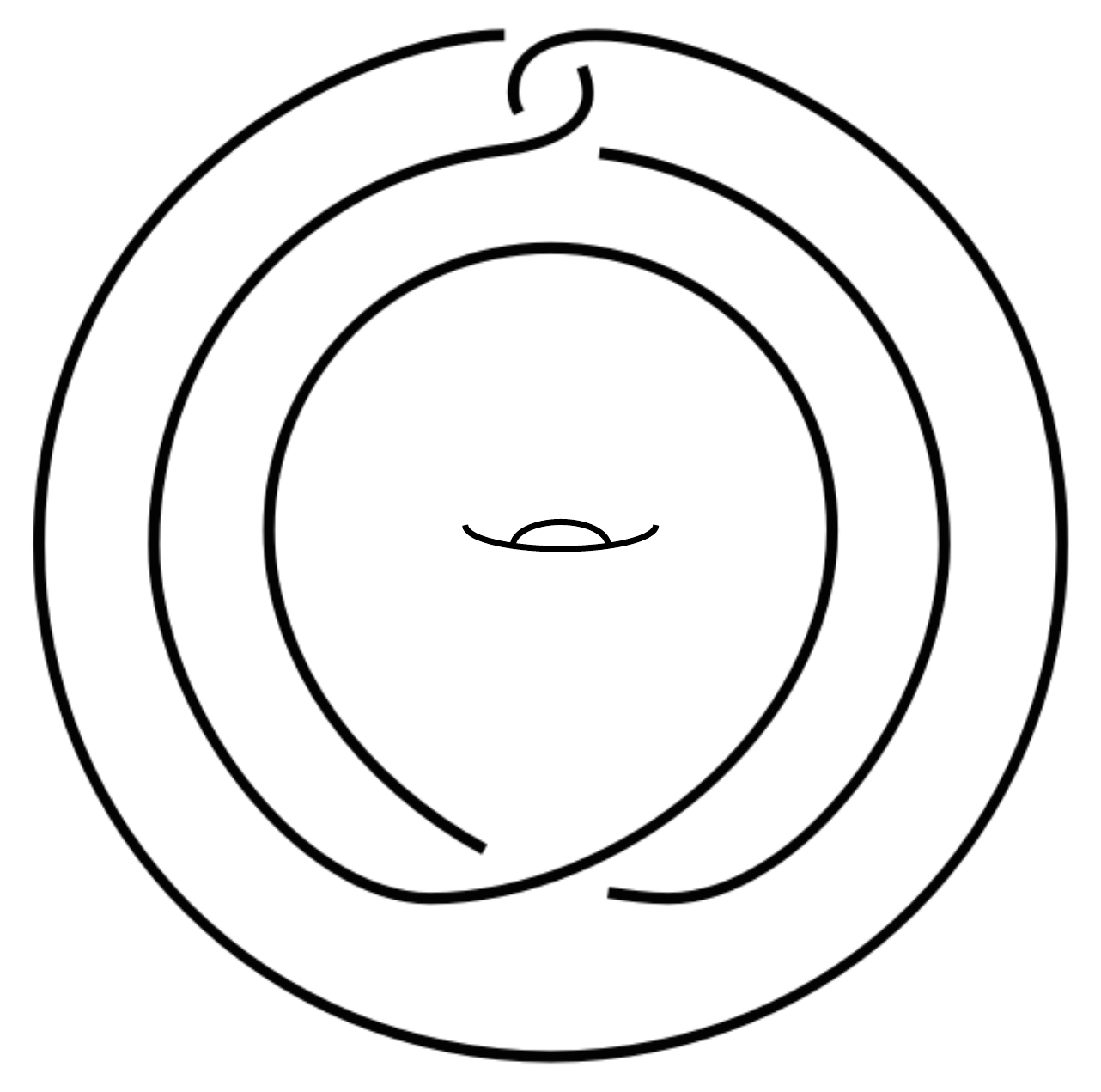}
	\caption{Knots inside $ST$.}
	\label{figTorus}
\end{figure}

\noindent For grafting these knots, we put all their crossings inside hot zones --- $R_H$ and $S_H$ respectively --- leaving outside as many strands as the wrapping number of the knot --- in this case, $3$ each. Then, we perform the strand switch shown in \emph{Figure \ref{figSwitch}} with as many strands as determined by the operation. In our case, we choose $3$ as the grafting index. This whole process is depicted in \emph{Figure \ref{figGraft}}.

\begin{figure}[H]
	\centering{}
	\includegraphics[scale=0.7]{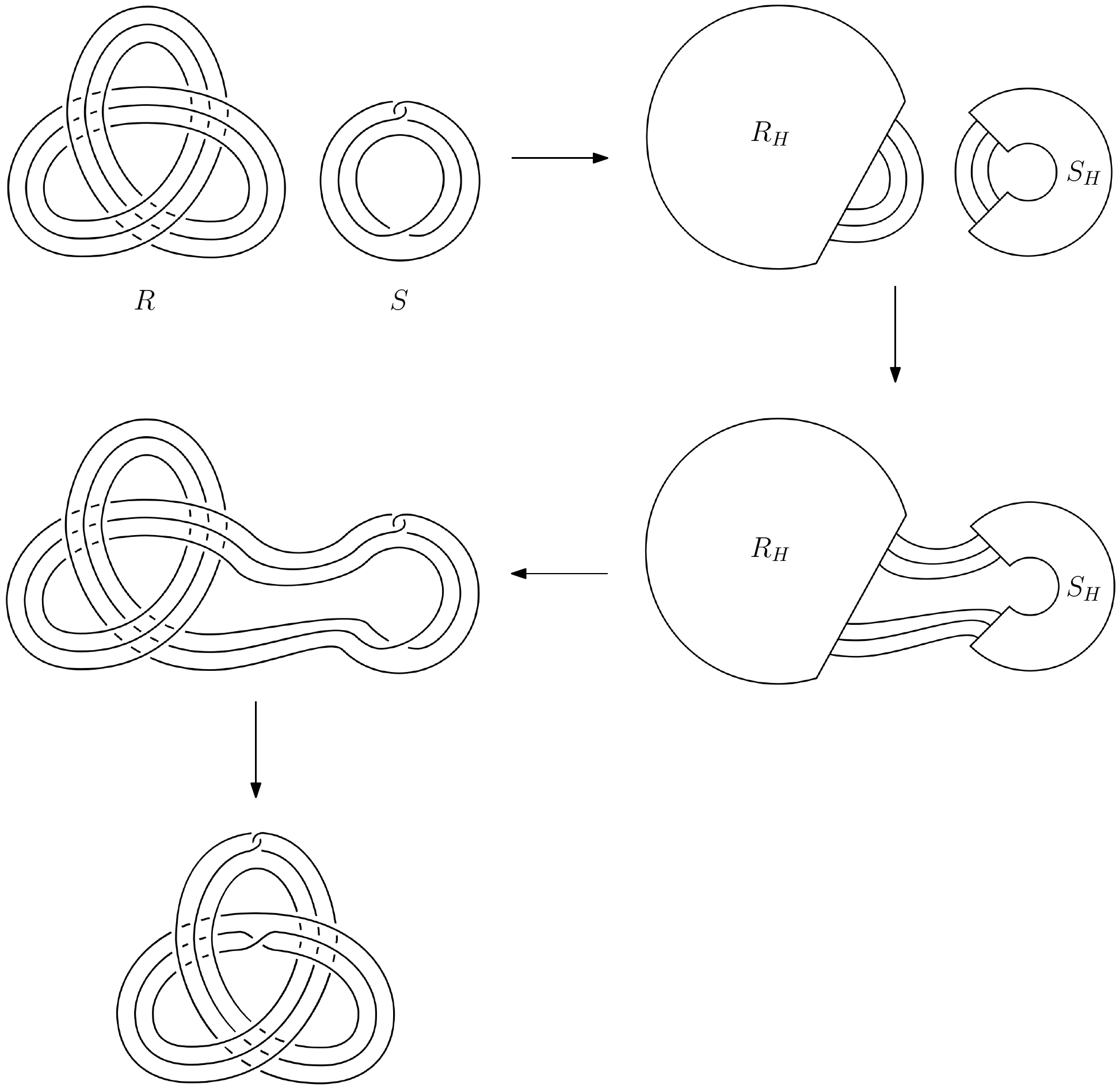}
	\caption{Construction of the graft $3_1^3\Cup_3 L(1,2)$.}
	\label{figGraft}
\end{figure}
\end{exmp}

The result of the grafting composition generally depends on the choice of the meridian where to split and rejoin. Therefore, it is not well defined in this sense --- we need to specify the meridian where to perform the switch. \emph{Figure \ref{figNotWell}} shows two different knots arising from the same original knots $K_1$ and $K_2$ in $ST$ grafting along different meridians --- $m_1$ and $m_2$. As a result, the graft $K_1\Cup_2^{m_1} K_2$ (on the left) has two components whereas the graft $K_1\Cup_2^{m_2} K_2$ (on the right) only has one, proving therefore their inequality.
\begin{figure}[ht]
	\centering{}
	\includegraphics[scale=0.7]{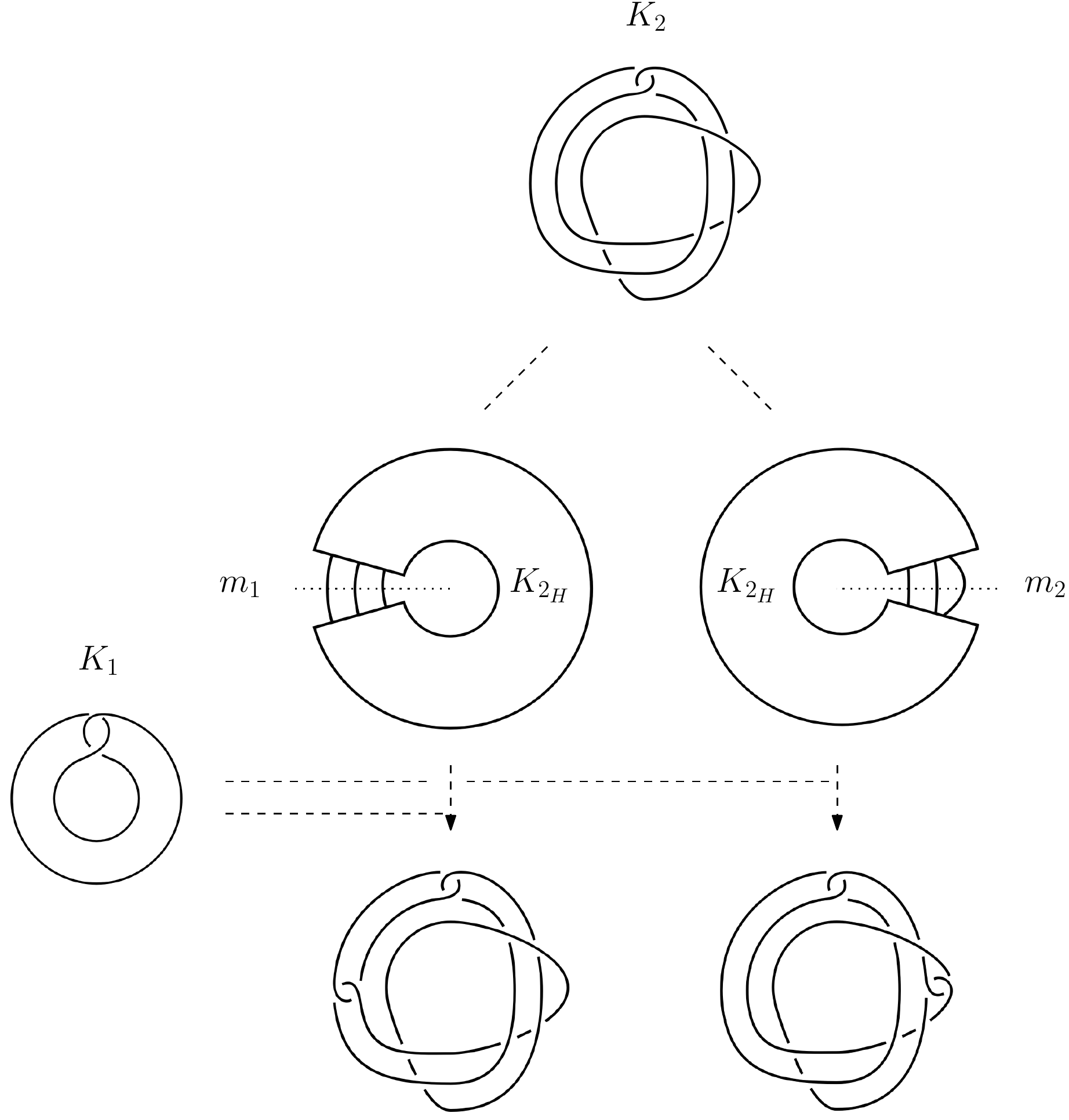}\\
	\qquad\qquad\qquad\qquad $K_1\Cup_2^{m_1} K_2$\spa{20}\spa{8}$K_1\Cup_2^{m_2} K_2$
	\caption{Knot $K_1$ respectively grafted on the left and right side of $K_2$.}
	\label{figNotWell}
\end{figure}
Nonetheless, in our case of study knot parallels will always be one of the two grafting components, therefore in this case the resulting graft will be well defined and we will simply express the grafting operation specifying the number of switched strands --- this is because knot parallels can be traversed inside, allowing the grafting to be executed anywhere without the result changing. 

To better comprehend the general construction of grafts, a more general case of grafting using knot parallels is depicted in \emph{Figure \ref{figGraftExample2}}. The grafting operation is always limited by the minimum of the rootstock's and scion's wrapping number.
\begin{figure}[H]
	\centering{}
	\includegraphics[scale=0.2]{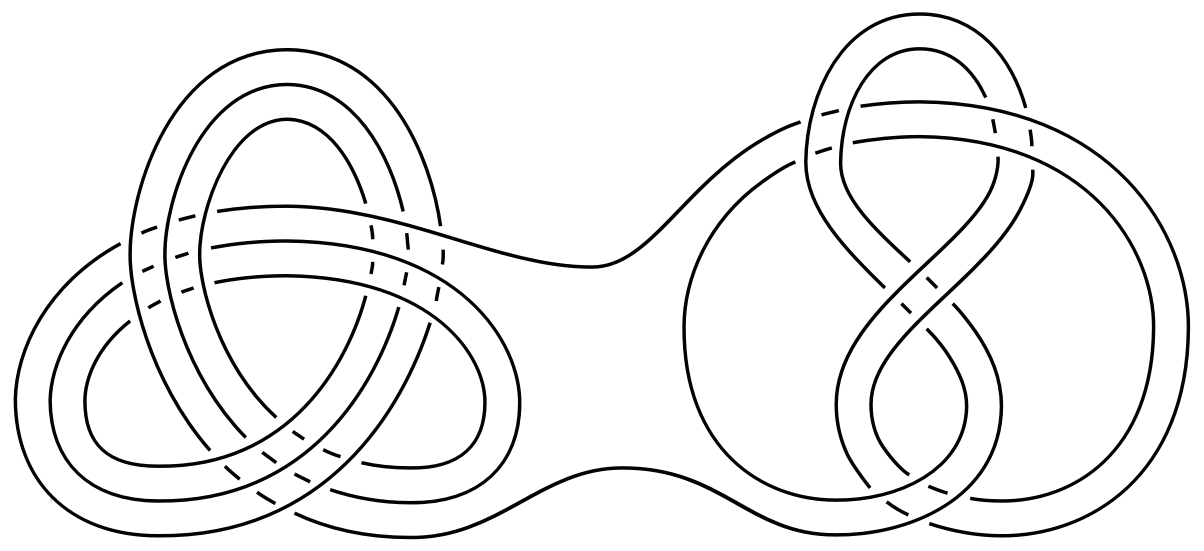}
	\caption{$3_1^3\Cup_14_1^2$.}
	\label{figGraftExample2}
\end{figure}

Please note that in the case of using at least one knot parallel $R\Cup_k S=S\Cup_k R$, that is, the grafting composition is commutative. In particular, if $k=1$ the operation performed is nothing else than the usual composition (\emph{connected sum}) of knots since $K=K^1$ ($1$-parallel), and so grafts can be regarded as an extension of the usual knot sum.
\begin{equation*}
	K_1\#K_2=K_1\Cup_1 K_2.
\end{equation*}

As an example to illustrate how this concept extends the knot sum in general, please regard \emph{Figure \ref{figGraftExample}}, where it is shown how the $r$-parallel of the composition of two knots can be expressed as the $r$-graft of those knots' $r$-parallels. More precisely, $(K_1\#K_2)^k=K_1^k\Cup_kK_2^k$.
\begin{figure}[ht]
	\centering{}
	\includegraphics[scale=0.95]{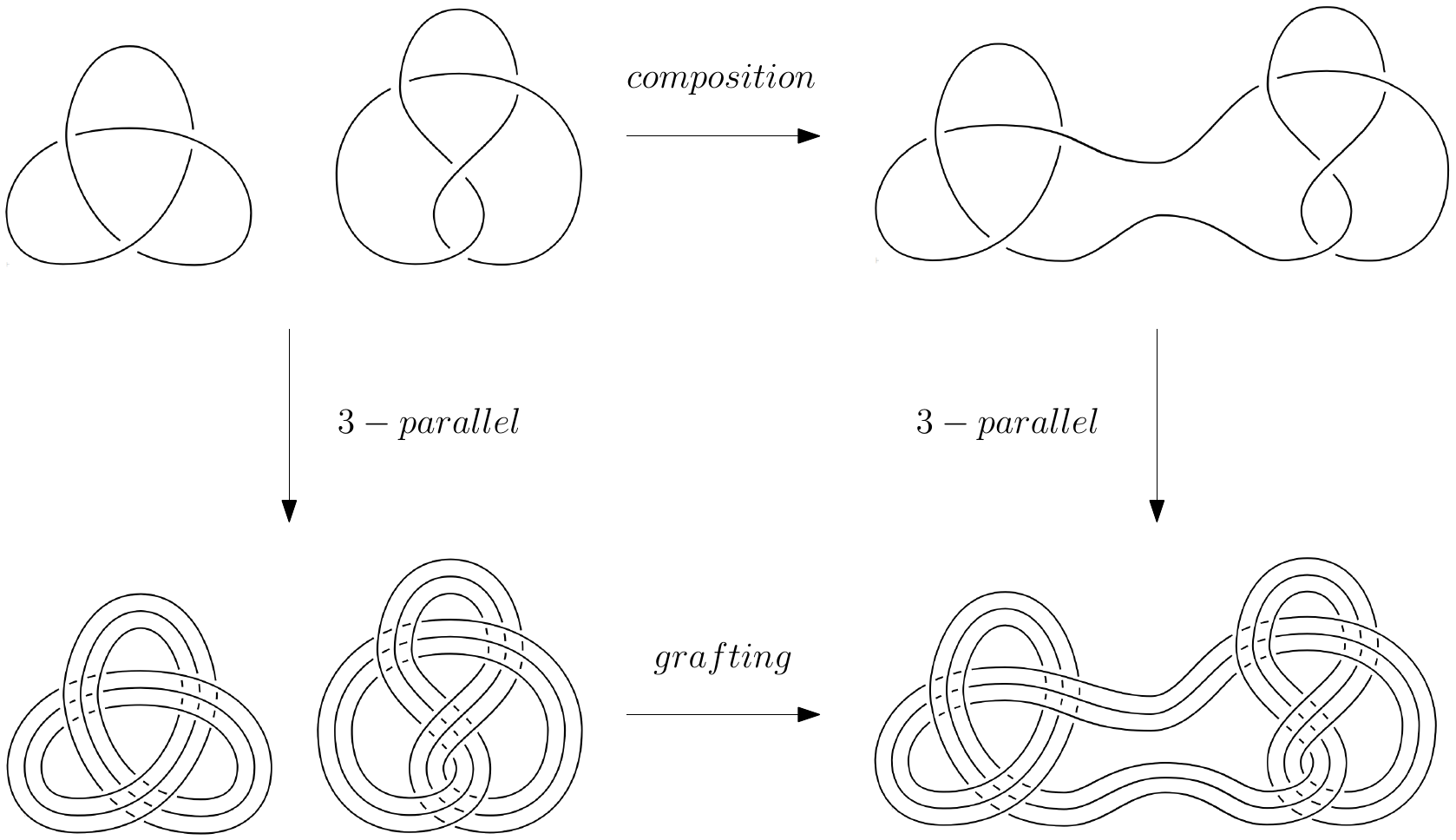}
	\caption{$(3_1\#4_1)^3=3_1^3\Cup_34_1^3$.}
	\label{figGraftExample}
\end{figure}

Again, we ought not to forget that the original knots need to be considered inside the solid torus as in the definition of graft. Nonetheless, since these parallel cases will be our main focus, we will assume that the knot parallels are properly taken inside $ST$ for the graft to work, and simply specify the number of strands that will be switched in the subscript of the grafting operator.

The last definition I want to introduce here is what we call \emph{cable link}. This is also a known concept, yet it is important to recall how it is defined so that we agree on how to handle all these concepts. Let us consider $D$ to be a diagram of $C$, and $wr(D)$ its writhe with blackboard framing. We call the $r$-cable of $D$ and write $(D;r)$ to the $r$-parallel of $D$ with $0$-framing. Conceptually, this is the same as adding $-wr(D)$ twists (\emph{Reidemeister move 1}) to $D$ with blackboard framing and taking its $r$-parallel. In other interpretation, it is the same as taking the $r$-parallel of $D$ with blackboard framing, and then adding $-wr(D)$ full twists to it. This ``adding full twists action'' can be expressed in terms of the newly introduced concept of graft is taking the $r$-parallel of $D$ as rootstock, the torus link $T(-wr(D)r,r)$ as scion and grafting them through all the $r$ strands ($r$-grafting). Both interpretations can be related and compared in \emph{Figure \ref{figMain}}.
\begin{figure}
	\centering{}
	\includegraphics[scale=1]{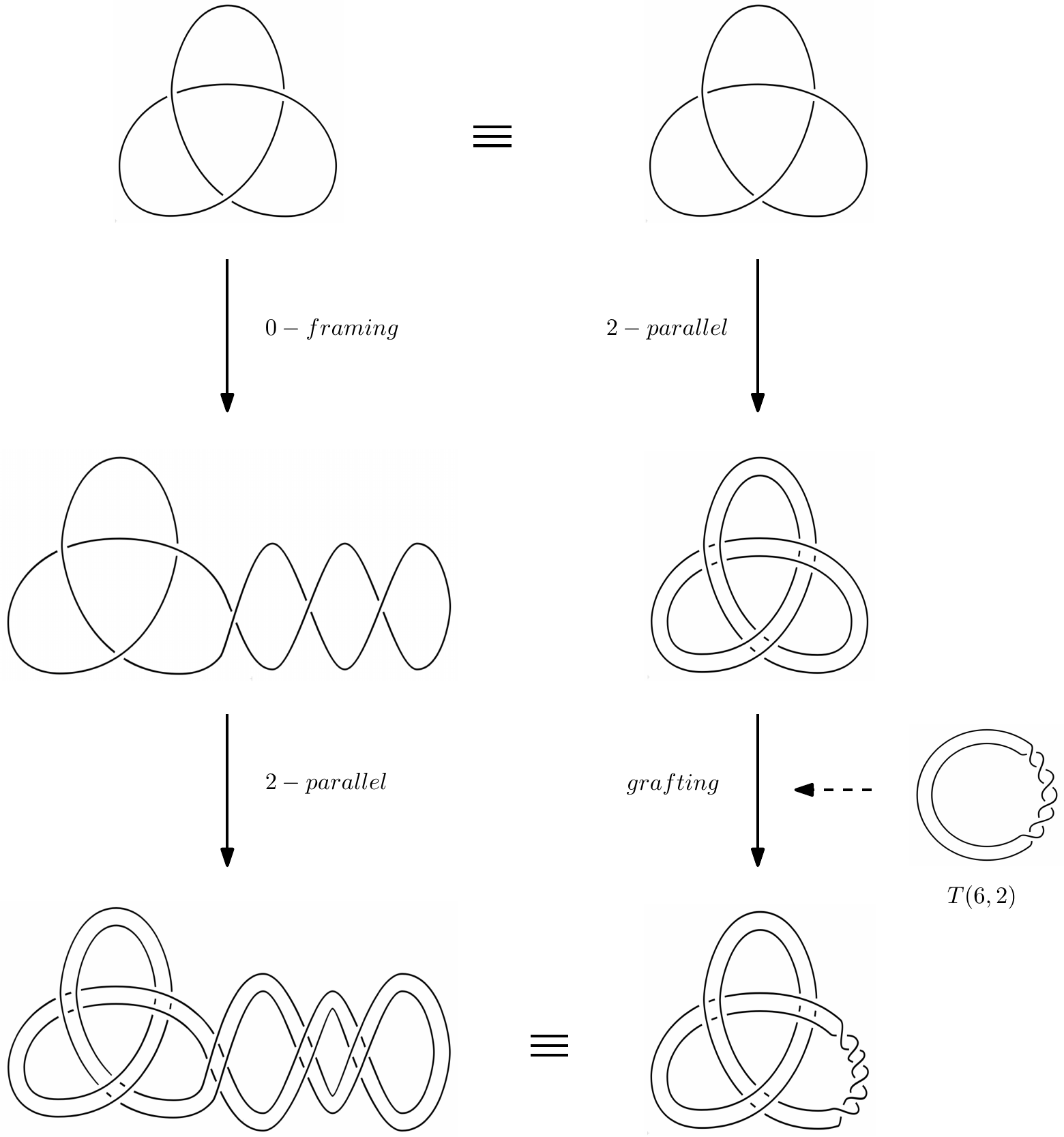}
	\caption{Ways of constructing $(3_1;2)$, the $2$-cable of the trefoil.}
	\label{figMain}
\end{figure}

Bearing all these definitions in mind, we will now present some results regarding grafts.

\begin{thm}\label{thmGraft}
Let $R$ be a knot in $ST$ with wrapping number $w$, and let $S^w$ be the $w$-parallel of a knot $S\subset\mathbb{S}^3$ properly embedded in $ST$ to have wrapping number $w$ as well. Let $R\Cup_wS^w$ be their $w$-graft. Then,
\[J(R\Cup_wS^w)=J\subST(R)\Big|_{z^k\subST=J(S^k)},\]
where $J\subST(R)$ is the Jones polynomial of $R$ in $ST$ \cite{JP}, and $0\leq k\leq w$.
Consequently, assuming $J\subST(R)=\sum_{k=0}^w\beta_kz\subST^k$ (with $\beta_k\in\mathbb{Z}[t^{-1/2}, t^{1/2}]$ and $\beta_w\neq 0$), we can express the previous equation as
\[J(R\Cup_wS^w)=\sum_{k=0}^w\beta_kJ(S^k).\]
\end{thm}
\begin{proof} 
The proof is straightforward from the diagram depiction of the graft. Starting with the crossings of $R$, we only need to notice that solving $J(R\Cup_wS^w)$ in $\mathbb{S}^3$ (through the Jones polynomial skein relations) is identical to solving $J\subST(R)$ in $ST$. In $ST$, after splitting every crossing of $R$ we are left with the $z\subST^k$ generators of $J\subST(R)$, whereas in $\mathbb{S}^3$ the remains of the crossing splits of the $R$ part of $R\Cup_wS^w$ are the same where each $z\subST^k$ (circle around the center of the torus) has been replaced by $k$ parallel copies of $S$.
\end{proof}

As previously noted, when both knots have wrapping number $1$, the graft of $R$ and $S$ is their knot sum. This is reflected in the following corollary.
\begin{corol}
Let $R$ and $S$ be two knots in $\mathbb{S}^3$, and let $R\#S$ be their connected sum.
\[J(R\#S)=J(R)\cdot J(S).\]
\end{corol}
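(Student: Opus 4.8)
The plan is to derive this corollary directly from Theorem \ref{thmGraft} by specializing to the case $w=1$. The key observation is that when both $R$ and $S$ are ordinary knots in $\mathbb{S}^3$ with wrapping number $1$, their connected sum $R\#S$ is precisely the $1$-graft $R\Cup_1 S^1$, as already noted in the text (the passage stating $K_1\#K_2=K_1\Cup_1 K_2$, using $S=S^1$). So the entire content is that the general grafting formula collapses to multiplication of Jones polynomials when the wrapping number is minimal.

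First I would set $w=1$ in Theorem \ref{thmGraft}. The hypothesis requires $R\subset ST$ with wrapping number $w$; taking $R$ to be a knot in $\mathbb{S}^3$ embedded in $ST$ with wrapping number $1$ realizes this. Likewise $S^w=S^1=S$ is just $S$ itself, embedded with wrapping number $1$. Then the conclusion of the theorem reads
\[J(R\Cup_1 S)=\sum_{k=0}^1\beta_k J(S^k)=\beta_0 J(S^0)+\beta_1 J(S^1).\]
Here $S^0$ is the $0$-parallel, which is the empty diagram, so $J(S^0)$ is the Jones polynomial of the unknot, namely $1$ (this follows from the convention $z\subST^0$ contributing the trivial loop around nothing, equivalently the empty replacement); and $S^1=S$, so $J(S^1)=J(S)$.

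Next I would identify the coefficients $\beta_0$ and $\beta_1$. Since $R$ has wrapping number $1$, by Proposition \ref{propWrap} its Jones polynomial in $ST$ has top degree $M\le w\subST(R)=1$, so $J\subST(R)=\beta_0+\beta_1 z\subST$. The crucial step is computing these coefficients: the generator $z\subST$ corresponds to a single loop winding once around the center of the torus, and substituting $z\subST^k=J(S^k)$ with $S$ being the core of the torus recovers the $\mathbb{S}^3$ Jones polynomial. Evaluating $J\subST(R)$ under the replacement that sends $z\subST$ to the unknot (i.e.\ forgetting the torus and taking $S$ to be unknotted) must return $J(R)$ itself, which forces $\beta_0+\beta_1\cdot 1=J(R)$, and more precisely, tracking the skein resolution shows $\beta_0=0$ and $\beta_1=J(R)$ because a wrapping-number-one pattern is isotopic in $ST$ to the core circle times its $\mathbb{S}^3$ polynomial factor. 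Substituting gives $J(R\Cup_1 S)=J(R)\cdot J(S)$.

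The main obstacle I anticipate is justifying cleanly that $\beta_0=0$ and $\beta_1=J(R)$ rather than merely $\beta_0+\beta_1=J(R)$; this requires understanding the precise normalization of the $ST$ Jones polynomial $J\subST$ and the meaning of the generators $z\subST^k$, especially $z\subST^0$. The cleanest route is probably to avoid extracting individual coefficients and instead argue structurally: a wrapping-number-one knot $R$ in $ST$ is isotopic to $R'\,\cup\,(\text{core})$ in a manner where the $ST$-bracket factors as $J(R)\cdot z\subST$, so that $\beta_1=J(R)$ and $\beta_0=0$ automatically, whence Theorem \ref{thmGraft} yields $J(R)\cdot J(S)$. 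I would confirm this factorization is consistent with the known multiplicativity of the Jones polynomial under connected sum, which also serves as an independent sanity check on the statement.
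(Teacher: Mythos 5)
Your proposal is correct and follows essentially the same route as the paper: identify $R\#S$ with the $1$-graft $R\Cup_1 S$, specialize \emph{Theorem \ref{thmGraft}} to $w=1$, and use the factorization $J\subST(R)=J(R)\,z\subST^1$ (equivalently $\beta_0=0$, $\beta_1=J(R)$) to conclude $J(R\#S)=J(R)\cdot J(S)$. The only real difference is that the paper asserts this factorization outright, whereas you flag it as the delicate step and supply the core-with-local-knot justification for it.
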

\begin{proof}
This is a well known result which would not need to be proven here. However, it can be newly proven as consequence of \emph{Theorem \ref{thmGraft}} using no previous knowledge of other proofs, which we think adds value to this alternative proof.\\
We just need to recall the observation $J(R\Cup_1 S)=J(R\#S)$ when $R$ and $S$ are considered in $ST$ with wrapping number $1$, and note they can be regarded as $R^1$ and $S^1$ respectively ($1$-parallels). Then, since the wrapping number of $R$ is $1$, we know that its Jones polynomial in $ST$ is equal to its usual Jones polynomial times the $z\subST^1$ generator: $J\subST(R)=J(R)z\subST^1$. Therefore,
\[J(R\#S)=J(R\Cup_1 S)=J\subST(R)\Big|_{z^k\subST=J(S^k)}=J(R)z\subST^1\Big|_{z^1\subST=J(S^1)}=J(R)\cdot J(S),\]
proving the expected result in an alternative way.
\end{proof}

With all this background we now proceed to present and proof this cornerstone lemma. The notation remains the same as in the previous pages.

\begin{lmm}\label{lemmamain}
Let $K\subset \mathbb{S}^3$ be an oriented adequate knot and $r\in \mathbb{N}$. Then:
\[B(J(K;r))\geq B(J(K^r)).\]
\end{lmm}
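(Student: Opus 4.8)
The plan is to use \emph{Theorem \ref{thmGraft}} to write the two Jones polynomials in the statement as two substitutions into one common solid-torus polynomial, and then to compare their breadths. Fix an adequate diagram $D$ of $K$ with $n$ crossings and writhe $\omega:=wr(D)$, and let $D^r$ be its blackboard $r$-parallel, regarded as a pattern in $ST$ of wrapping number $r$; write $J\subST(D^r)=\sum_{k=0}^r\beta_k z\subST^k$ with $\beta_r\neq 0$. By the cable definition, $(K;r)$ is the graft $D^r\Cup_r T(-\omega r,r)$, whose scion $T(-\omega r,r)$ is the $r$-parallel of the framing curve $T(-\omega,1)$, so \emph{Theorem \ref{thmGraft}} gives $J(K;r)=\sum_{k=0}^r\beta_k J(T(-\omega k,k))$. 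Grafting $D^r$ with the untwisted scion $O_r$, the $r$-component unlink, produces instead the blackboard parallel, so the same theorem gives $J(K^r)=\sum_{k=0}^r\beta_k J(O_k)$. Thus it suffices to show that replacing each unlink $O_k$ by the torus link $T(-\omega k,k)$ does not decrease the breadth of the sum.

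First I would compute the right-hand side exactly. By \emph{Lemma \ref{lemmaParal}} the diagram $D^r$ is adequate, so \emph{Corollary \ref{corolKaufDr}} together with \emph{Proposition \ref{propBreadth}} gives $B(J(K^r))=\tfrac14\big(2nr^2+2r|s_+D|+2r|s_-D|-4\big)$, while \emph{Lemma \ref{lemmaKaufDr}} pins down the extreme terms of $\kauf{D^r}$, hence the leading coefficient $\beta_r$ and its nonvanishing. The value of recording this is that the breadth of $J(K^r)$ is already realized by the top generator $\beta_r z\subST^r$ together with the extreme states $s_+$ and $s_-$ of $D^r$, without cancellation against the lower $\beta_k$.

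Next I would bound the left-hand side from below. After possibly replacing $K$ by its mirror I may assume $\omega\geq 0$, so for each $k$ the torus link $T(-\omega k,k)$ is obtained from the unlink $O_k$ by inserting $\omega$ full twists of a single sign on $k$ parallel strands. Inserting same-sign full twists can only push the extreme exponents of the Kauffman bracket farther apart, so $B(\kauf{T(-\omega k,k)})\geq B(\kauf{O_k})$ for every $k$, strictly when $\omega>0$ and $k\geq 2$. Applied to the top summand $k=r$, whose coefficient $\beta_r$ is nonzero with extreme exponents fixed by the adequacy of $D^r$, the maximal and minimal exponents of $\beta_r J(T(-\omega r,r))$ are spread at least as far apart as those of $\beta_r J(O_r)$; provided these extreme contributions are not cancelled by the lower-order summands, this yields $B(J(K;r))\geq B(J(K^r))$.

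The main obstacle is precisely this no-cancellation claim for $\sum_{k=0}^r\beta_k J(T(-\omega k,k))$. I would reduce it to a circuit count in the twist region of the cable diagram $D^r\Cup_r T(-\omega r,r)$: one must verify that the all-$A$ and all-$B$ states of this diagram still realize the top and bottom bracket coefficients without cancellation, so that the $\omega r(r-1)$ extra crossings of the framing correction contribute a nonnegative net amount $2\omega r(r-1)+2\,\Delta|s_+|+2\,\Delta|s_-|$ to the breadth, where $\Delta|s_\pm|$ record the change in the number of circuits of the extreme states caused by the twists. This amounts to controlling the same-sign torus-braid tangle at its two extremal smoothings---one connecting the $r$ strands straight through and the other producing nested circuits whose count offsets the added crossings---and bounding the exponent ranges of the lower terms $\beta_k J(T(-\omega k,k))$ for $k<r$ so they cannot reach the extreme exponents set by the top term. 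This local analysis is the technical heart of the argument.
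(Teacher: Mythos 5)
Your decomposition is the mirror image of the paper's, and that reflection is what costs you the proof. The formulas you derive from \emph{Theorem \ref{thmGraft}} are correct: $J(K;r)=\sum_{k=0}^r\beta_kJ(T(-\omega k,k))$ and $J(K^r)=\sum_{k=0}^r\beta_kJ(O_k)$, with $\beta_k$ the solid-torus coefficients of the pattern $D^r$. But the step you defer --- that the extreme exponents of the top addend $\beta_rJ(T(-\omega r,r))$ survive uncancelled in the first sum, and, dually, that $B(J(K^r))$ is realized by $\beta_rJ(O_r)$ rather than by a spread across several addends --- is the entire content of the lemma, and you explicitly leave it unproven (``the technical heart''). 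It is also not something \emph{Lemma \ref{lemmaKaufDr}} hands you: that lemma locates the extreme exponents of $\kauf{D^r}$, but it says nothing about \emph{which} coefficient $\beta_k$ of the solid-torus expansion those extreme terms sit in; to place them in $\beta_r$ you would need a separate argument that every circuit of the extreme states $(s_\pm D)^r$ winds around the core of the annulus (true, but it requires a meridian-intersection count you do not supply). As written, the proposal reduces the lemma to an unproved no-cancellation statement that is at least as hard as the lemma itself, because the $\beta_k$ are uncontrolled Laurent polynomials.

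The paper dissolves this difficulty by swapping the roles of rootstock and scion. It expands the \emph{torus link} in the solid torus, $\kaufST{T(-\omega r,r)}=\sum_{k=0}^r\alpha_kz\subST^k$, and substitutes the parallels of $D$, giving $\kauf{D;r}=\sum_{k=0}^r\alpha_k\kauf{D^k}$. The payoff is that the unknown coefficient multiplying the one term you care about has breadth zero: $T(-\omega r,r)$ is plus-adequate when $\omega<0$ and minus-adequate when $\omega>0$, and exactly one of its states produces $r$ circuits around the core, so $\alpha_r=A^{\pm c(T)}$ is a monomial and $B(\alpha_r\kauf{D^r})=B(\kauf{D^r})$ exactly. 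A single max/min estimate, $B(\kauf{D;r})\geq M_{\alpha_r\kauf{D^r}}-m_{\alpha_r\kauf{D^r}}$, then yields $B(\kauf{D;r})\geq B(\kauf{D^r})$, and \emph{Proposition \ref{propBreadth}} converts this to the Jones-polynomial statement; adequacy of $D$ (hence reducedness, hence no accidental untwisting) is what guarantees the torus-link factor is genuinely of this semi-adequate form. Neither your twist-monotonicity claim $B(\kauf{T(-\omega k,k)})\geq B(\kauf{O_k})$ nor the exact formula for $B(J(K^r))$ is needed. If you want to salvage your route, the repair is essentially to adopt this dual decomposition, putting the torus link in the position where its semi-adequacy can be exploited.
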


\begin{proof}
If $r=1$ the statement is trivial since $(K;1)=K^1=K$. Let us assume that $r\geq 2$.
We start by analyzing how $(D;r)$ is built. For this proof, we will, of course, choose $D$ to be an adequate diagram of $K$. Please notice that adequate implies ``reduced'', which means that there are no crossings such as the ones in \emph{Figure \ref{figReduced}}, where $D_1$ are $D_2$ are knot diagrams that connect to the center part. If the diagram was not reduced, then it could be plus- or minus-adequate, but not both at the same time.
\begin{figure}[ht]
	\centering{}
	\includegraphics[scale=0.5]{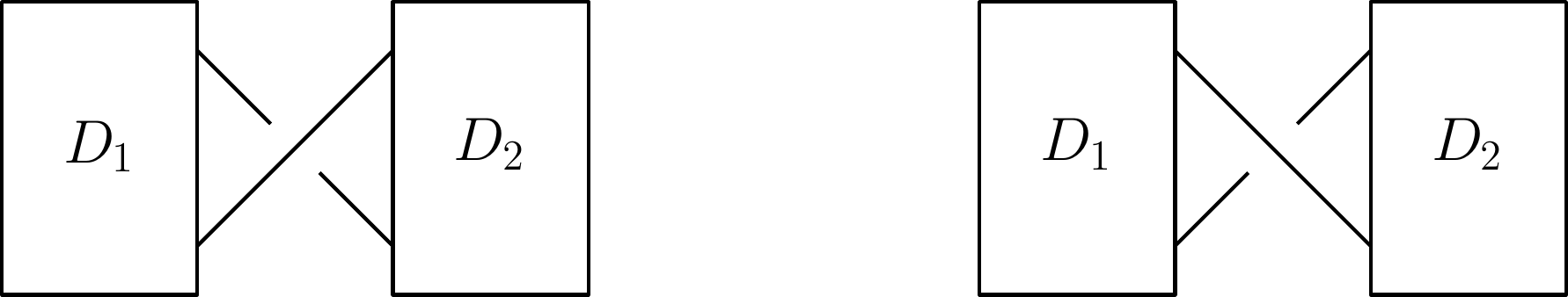}
	\caption{Crossings in non-reduced knots.}
	\label{figReduced}
\end{figure}\\
As we saw in \emph{Figure \ref{figMain}}, taking the $r$-parallel of a $0$-framed diagram $D$ of $K$ is the same as grafting the twisted section of the torus link $T(-w(D)r, r)$ with the $r$-parallel of $D$.

It is important to notice that since the diagram $D$ that we will be using is reduced, adding twists to its parallel will always be a ``crossing-creating'' action --- this is, a created twist will never cancel out with a previously existing reverted twist, since twists in the parallel of a knot only arise from \emph{Reidemeister move 1}, which would cause the diagram not to be reduced. \emph{Figure \ref{figReid}} shows how a twist generates from a non-reduced diagram.
\begin{figure}[ht]
	\centering{}
	\includegraphics[scale=0.5]{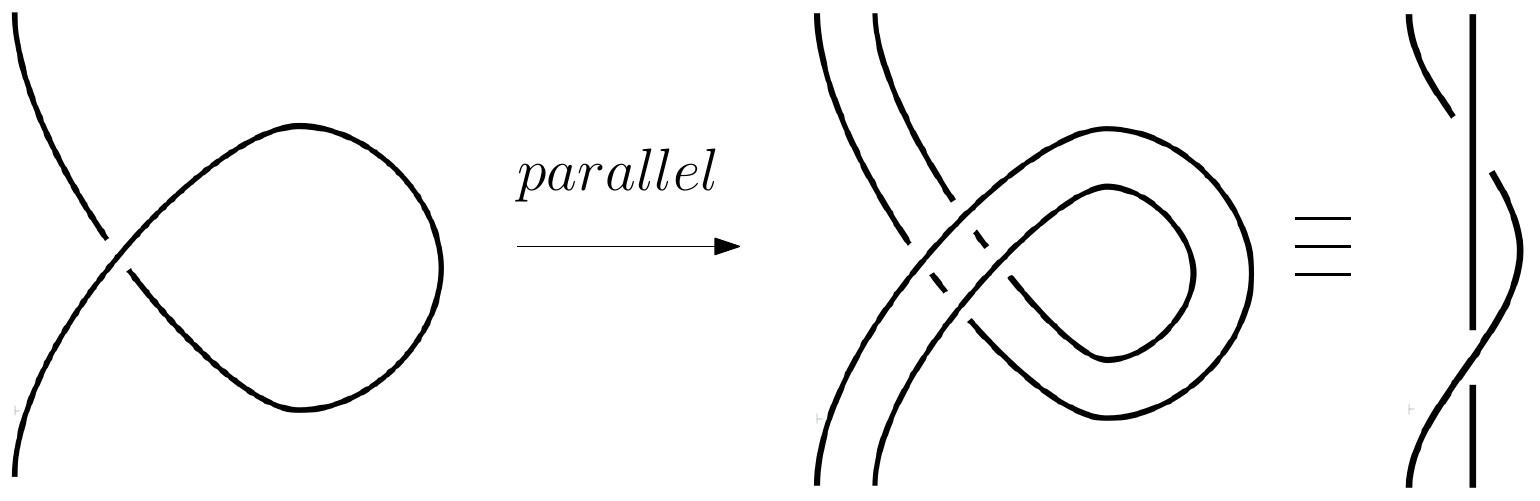}
	\caption{A twist under Reidemeister move 1 becomes a full twist in the parallel.}
	\label{figReid}
\end{figure}\\
In order to prove the formulated inequality, we start by analyzing the Kauffman bracket of $(D;r)$ in terms of the Kauffman bracket of the torus link $T(-w(D)r,r)$, which we will assume that is expressed in $ST$ as
\begin{equation}\label{e0}
\kaufST{T(-w(D)r,r)}=\sum_{k=0}^r\alpha_kz\subST^k, \spa{5}\alpha_r\neq 0,
\end{equation}
with $\alpha_k\in \mathbb{Z}[A^{-1},A]$. Following the right side stream in \emph{Figure \ref{figMain}} and applying the Kauffman bracket version of \emph{Theorem \ref{thmGraft}}, the Kauffman bracket of $(D;r)$ can be then calculated as:
\begin{equation}\label{e1}
\kauf{D;r}=\kaufST{T(-w(D)r,r)}\Big|_{z^k\subST=\kauf{D^k}}=\sum_{k=0}^r\alpha_k\kauf{D^k}, \spa{5}\alpha_r\neq 0.
\end{equation}
In particular, we can estimate the breadth of $\kauf{D;r}$ using the maximum and minimum exponents of $A$ in the last addend of \emph{equation \ref{e1}}, since for any maxima and minima their difference will always be greater or equal than any difference of the involved terms:
\begin{equation*}
\max(a_1,a_2,...,a_n)-\min(b_1,b_2,...,b_m)\geq a_i-b_j, \spa{5}\forall i\in\{1,2,...,n\}, j\in\{1,2,...,m\}.
\end{equation*}
In our case:
\begin{equation}\label{emax}
B(\kauf{D;r})=M_{\kauf{D;r}}-m_{\kauf{D;r}}\geq M_{\alpha_r\kauf{D^r}}-m_{\alpha_r\kauf{D^r}}.
\end{equation}
Since we know $\alpha_r\neq 0$, we can separate the terms on the right side as
\begin{align*}
M_{\alpha_r\kauf{D^r}}&-m_{\alpha_r\kauf{D^r}}=(M_{\alpha_r}+M_{\kauf{D^r}})-(m_{\alpha_r}+m_{\kauf{D^r}})\\
&=(M_{\alpha_r}-m_{\alpha_r})+(M_{\kauf{D^r}}-m_{\kauf{D^r}})=B(\alpha_r)+B(\kauf{D^r}).
\end{align*}
Finally, we only need to notice that $B(\alpha_r)$ is always $0$. This is because there is only one state for any torus link $\kaufST{T(-w(D)r,r)}$ that generates $z\subST^r$. This depends on the writhe of $D$, but whichever the case, $T(-w(D)r,r)$ will be either plus- or minus-adequate. \emph{Figure \ref{figTorusAdeq}} shows how a torus link generated for a diagram $D$ with $w(D)<0$ is plus-adequate. 
\begin{figure}[H]
	\centering{}
	\includegraphics[scale=0.55]{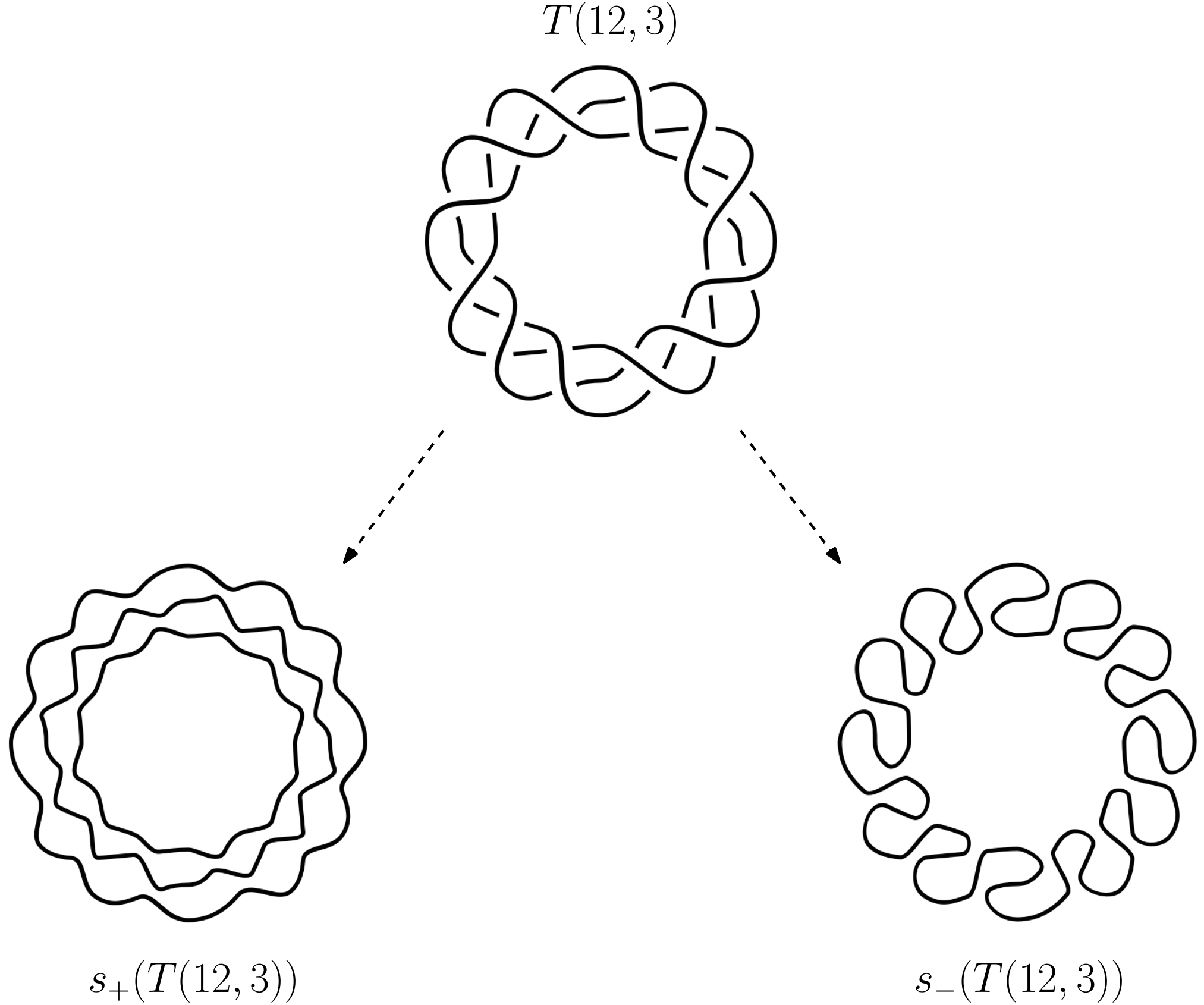}
	\caption{$T(12,3)$ is plus-adequate but not minus-adequate.}
	\label{figTorusAdeq}
\end{figure}
\noindent Let us assume that $w(D)<0$ and let $c(T)$ be the number of crossings of $T(-w(D)r,r)$, for short. Then we know that the torus link is plus-adequate, and the term accompanying $z\subST^r$ is $\alpha_r=A^{c(T)}$ --- all crossings break positively and only circuits around the center of the torus are generated. Therefore $M_{\alpha_r}=m_{\alpha_r}=c(T)$. If $w(D)>0$ the torus link is minus adequate and $\alpha_r=A^{-c(T)}$.
\end{proof}

This lemma becomes a key for proving the main theorem of this work. It will be used together with the next theorem, which states how to calculate the Jones polynomial of satellite knots in terms of their pattern and companion's respective Jones polynomials. But let us first reintroduce the satellite knots. With all these grafting concepts, satellite knots can be redefined as grafting a pattern $P\in ST$ (with wrapping number $M$) with the result of grafting a companion $C\in \mathbb{S}^3$ with the torus link $T(-wr(C)M,M)$. In other words:
\[Sat(P,C)=P\Cup_M(T(-wr(C)M,M)\Cup_MC^M).\]
This construction can be easily recognized in \emph{Figure \ref{figSat}}.
\begin{figure}[ht]
	\centering{}
	\includegraphics[scale=1]{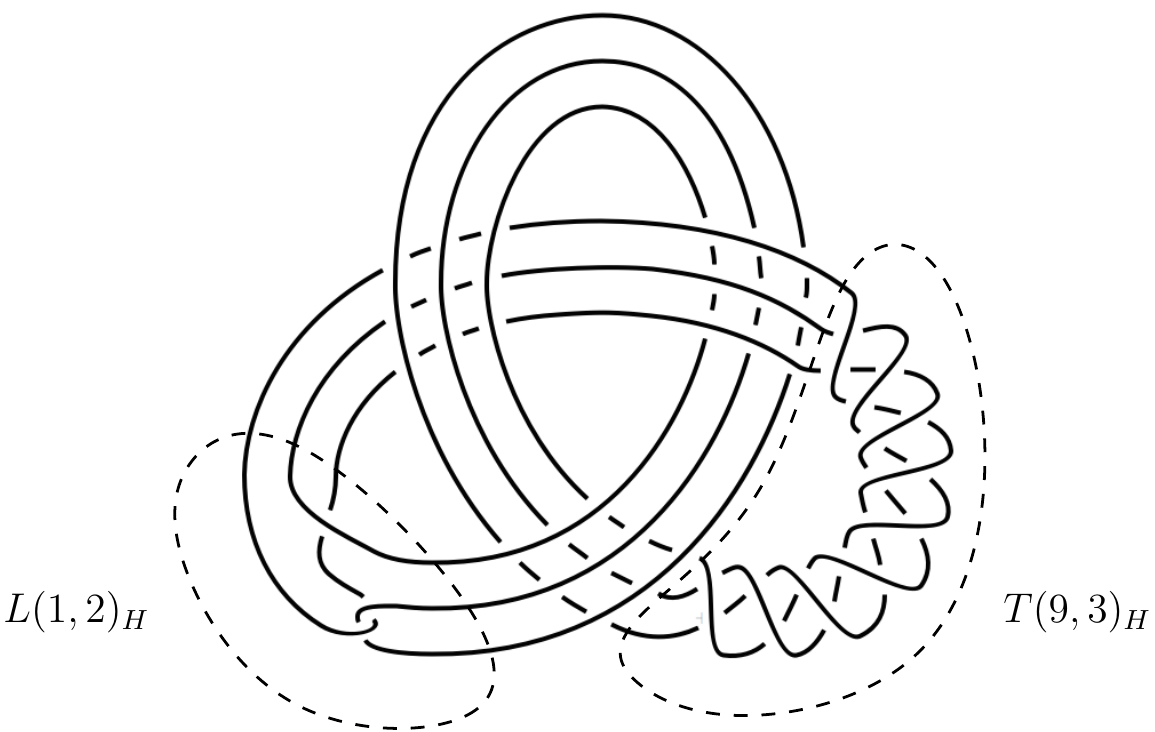}
	\caption{$Sat(L(1,2), 3_1)=L(1,2)\Cup_3(T(9,3)\Cup_33_1^3)$.}
	\label{figSat}
\end{figure}

\begin{thm}\label{thmJP}
Let $P\subset ST$ such that $J\subST(P)=\sum_{k=0}^M\beta_k z^k\subST$ with $\beta_M\neq 0$, let $C\subset \mathbb{S}^3$, and let $Sat(P,C)$ be their satellite knot. Then,
\[J(Sat(P,C))=\sum_{k=0}^M\beta_k J(C;k),\]
where $J(C;k)$ is the Jones polynomial of the $k$-cable of $C$.
\end{thm}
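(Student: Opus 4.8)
The plan is to mirror the proof of \emph{Theorem \ref{thmGraft}} almost verbatim, the one genuinely new ingredient being the identification of the scion with a cable rather than a plain parallel. First I would rewrite the satellite as the graft $Sat(P,C)=P\Cup_M(C;M)$ exhibited in \emph{Figure \ref{figSat}}, where $M=w\subST(P)$ is the wrapping number of the pattern and $(C;M)=C^M\Cup_M T(-wr(C)M,M)$ is the $M$-cable. Since $P$ has wrapping number $M$, its $M$ strands running around the core of $ST$ are precisely the strands switched in the graft against the $M$ strands of the cable.

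Next I would compute $J(Sat(P,C))$ by resolving only the crossings of the $P$-part via the Jones skein relations. Every crossing of $P$ lives inside $ST$, so this resolution never involves the companion and is formally identical to the computation of $J\subST(P)$ in the solid torus: after all crossings of $P$ are split we are left with a $\mathbb{Z}[t^{-1/2},t^{1/2}]$-linear combination of the generators $z\subST^k$ ($0\le k\le M$), carrying exactly the coefficients $\beta_k$ of $J\subST(P)=\sum_{k=0}^M\beta_k z\subST^k$.

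The substitution step is where the cable enters. Each generator $z\subST^k$ is $k$ parallel copies of the core longitude of $ST$, and under the faithful embedding $ST\hookrightarrow N(C)$ (longitude to longitude, hence $0$-framed) these become $k$ parallel $0$-framed copies of $C$. Equivalently, in the grafted picture the $k$ cable-strands that the $z\subST^k$ state meets trace out $k$ adjacent strands of $(C;M)$, and the framing-correcting factor $T(-wr(C)M,M)$ restricted to those $k$ strands yields $T(-wr(C)k,k)$; so they close up to exactly the $0$-framed $k$-cable $(C;k)=C^k\Cup_k T(-wr(C)k,k)$. Verifying this restriction carefully---that the writhe correction distributed over all $M$ strands really does restrict to the correct $k$-strand sub-cable, keeping the framing equal to $0$---is the main obstacle and the only point requiring genuine care. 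Granting it, linearity of the bracket over the state sum gives the replacement $z\subST^k\mapsto J(C;k)$, whence
\[
J(Sat(P,C))=J\subST(P)\Big|_{z\subST^k=J(C;k)}=\sum_{k=0}^M\beta_k J(C;k),
\]
which is the claim; independence of the chosen diagrams is automatic since $J\subST(P)$ and each $J(C;k)$ are invariants.
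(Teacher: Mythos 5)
The paper does not actually prove this theorem in situ: its entire ``proof'' is the citation to \cite{JP} (\emph{Theorem 3}). So your proposal is not matching or missing the paper's argument --- it is supplying one where the paper defers, and it is the natural one: the same substitution principle used to prove \emph{Theorem \ref{thmGraft}} (resolve only the crossings of the rootstock, recognize that computation as the computation of $J\subST(P)$, then substitute for the generators $z\subST^k$), with the scion now the $0$-framed cable $(C;M)$ rather than a bare parallel $S^M$. Your key new step is also correct: full twists are pure braids, so $k$ adjacent strands of $C^M\Cup_M T(-wr(C)M,M)$ close up coherently and give $C^k\Cup_k T(-wr(C)k,k)=(C;k)$; equivalently, $k$ parallel longitudes of $C$ are exactly the $0$-framed $k$-parallel. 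Three points should be made explicit to turn the sketch into a complete proof. First, you assert that $M$ equals the wrapping number of $P$, but the theorem only defines $M$ as the top degree of $J\subST(P)$, and \emph{Proposition \ref{propWrap}} gives merely $M\leq w\subST(P)$; the graft is performed along $w\subST(P)$ strands, and the stated formula survives because $\beta_k=0$ for $M<k\leq w\subST(P)$, so the discrepancy is harmless but should not be papered over. Second, a state of the resolution of $P$ contributing to $z\subST^k$ need not meet exactly $k$ cable strands --- an essential state circle can cross the grafting meridian $2j+1$ times and a nullhomotopic one $2j$ times --- so the identification with $k$ parallel longitudes holds only after an isotopy inside $ST$; the clean formulation is that the faithful embedding of $ST$ onto a tubular neighborhood of $C$ induces a map of skein modules under which the \emph{class} $z\subST^k$, not each individual state, goes to $(C;k)$. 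Third, the writhe normalization hidden in passing from brackets to Jones polynomials: because the cable is $0$-framed, the cable-part crossings contribute $wind(P)^2\cdot 0=0$ to the writhe of the satellite diagram, so that writhe equals $w(D_P)$ while the $k$-cable diagrams have writhe $0$; this is precisely why the coefficients appearing in the final formula are the $\beta_k$ themselves and not writhe-shifted versions of them. With these filled in, your argument is a correct, self-contained proof --- and, in outline, it is the argument of the cited reference --- whereas the paper itself buys brevity at the cost of being non-self-contained.
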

\begin{proof}
The proof to this result can be found in \cite{JP} (\emph{Theorem 3}).
\end{proof}

With these results in mind, we now proceed to present and prove our main result.

\begin{thm}\label{thmMain}
Let $P\subset ST$ such that $J\subST(P)=\sum_{k=0}^M\beta_k z^k\subST$ with $\beta_M\neq 0$, let $C\subset \mathbb{S}^3$ be an adequate knot, and let $Sat(P,C)$ be their satellite knot. Then,
\[c(Sat(P,C))\geq (M_{\beta_M}-m_{\beta_M})+\frac{M^2}{2}c(C)+2M-1.\]
\end{thm}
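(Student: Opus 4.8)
The plan is to bound $c(Sat(P,C))$ from below by the breadth of its Jones polynomial and then isolate the contribution of the top coefficient $\beta_M$. First I would invoke \emph{Theorem \ref{keythm}} to get $c(Sat(P,C))\geq B(J(Sat(P,C)))$, and then rewrite the Jones polynomial by \emph{Theorem \ref{thmJP}} as $J(Sat(P,C))=\sum_{k=0}^M\beta_k J(C;k)$. The whole problem then reduces to estimating, from below, the breadth of this finite sum of products of Laurent polynomials in $t$.

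The core of the argument is to show that the breadth of the sum is at least the breadth of its leading summand, namely $B\!\left(\sum_{k=0}^M\beta_k J(C;k)\right)\geq B(\beta_M J(C;M))$. As in the proof of \emph{Lemma \ref{lemmamain}}, I would use the elementary inequality $\max_i a_i-\min_j b_j\geq a_i-b_j$ applied to the exponents of $t$ arising in the expansion. Since the extreme monomials of a product multiply without cancelling, $B(\beta_M J(C;M))=B(\beta_M)+B(J(C;M))$, and $B(\beta_M)=M_{\beta_M}-m_{\beta_M}$; this supplies exactly the first summand of the claimed bound.

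It then remains to bound $B(J(C;M))$. Because $C$ is adequate, \emph{Lemma \ref{lemmamain}} gives $B(J(C;M))\geq B(J(C^M))$, and the computation carried out inside the proof of \emph{Theorem \ref{thmCrossParal}} (combining \emph{Corollary \ref{corolKaufDr}}, \emph{Lemma \ref{lemmaGeq}} and \emph{Proposition \ref{propBreadth}}, with $n=c(C)$ for an adequate diagram $D$ of $C$) yields $B(J(C^M))\geq \frac{M^2}{2}c(C)+2M-1$. Chaining these inequalities produces $c(Sat(P,C))\geq(M_{\beta_M}-m_{\beta_M})+\frac{M^2}{2}c(C)+2M-1$, as desired.

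The hard part is the second step: justifying that the extreme $t$-exponents of the leading summand $\beta_M J(C;M)$ are not cancelled by the lower summands $\beta_k J(C;k)$, $k<M$. Fixing an adequate diagram $D$ of $C$ with $n=c(C)$ crossings and writhe $w$, one computes that the maximal and minimal exponents of $J(C;M)$ are quadratics in $k$ with leading coefficients $\tfrac{n+w}{4}$ and $-\tfrac{n-w}{4}$, so the breadth of $J(C;k)$ grows like $\tfrac{k^2}{2}c(C)$ and the $k=M$ summand is the widest. However, after the writhe-induced shift the individual maximum (or minimum) exponent of $J(C;k)$ need not be monotone in $k$ — for an all-negative diagram the maxima can even decrease — so one cannot simply assert that both extremes of the sum come from $k=M$. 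The safe route is to verify separately that $M_{J(Sat(P,C))}\geq M_{\beta_M}+M_{J(C;M)}$ and $m_{J(Sat(P,C))}\leq m_{\beta_M}+m_{J(C;M)}$, by exhibiting an uncancelled monomial at or beyond each of these two exponents; here adequacy of $C$ is essential, since it forces the extreme coefficients of $\kauf{D^M}$, and hence of $J(C;M)$, to equal $\pm1$, which together with the degree bookkeeping above rules out exact cancellation at the relevant extreme. This non-cancellation verification is the main obstacle; everything else is the assembly of previously established inequalities.
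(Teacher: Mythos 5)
Your assembly coincides step for step with the paper's own proof: it too chains \emph{Theorem \ref{keythm}}, \emph{Theorem \ref{thmJP}}, the max/min observation of \emph{equation \ref{emax}} to pass to the leading summand and split $B(\beta_M J(C;M))=B(\beta_M)+B(J(C;M))$, and then applies \emph{Lemma \ref{lemmamain}} followed by the estimate extracted from the proof of \emph{Theorem \ref{thmCrossParal}}. The only point of divergence is your final paragraph: the non-cancellation verification you call ``the main obstacle'' is not carried out in the paper at all. There, the inequality $B(J(Sat(P,C)))\geq B(\beta_M J(C;M))$ is asserted directly from \emph{equation \ref{emax}}; that is, the survival of the extreme monomials of the $k=M$ summand against the lower summands $\beta_k J(C;k)$, $k<M$, is taken for granted, even though the degrees of the pattern coefficients $\beta_k$ are not controlled anywhere in the argument. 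So your concern is mathematically legitimate --- the breadth of a sum dominates the breadth of one summand only when there is no cancellation at the extremes --- but it identifies a step the paper glosses over, not one it resolves, so you should not expect to find the missing verification there. Be aware also that your sketched remedy is not yet a proof: knowing that adequacy forces the extreme coefficients of $\kauf{D^M}$, hence of $J(C;M)$, to be $\pm1$ does not by itself prevent a lower summand $\beta_k J(C;k)$ from contributing an equal monomial of opposite sign at that exponent; one would additionally need to show that the extreme degrees of $\beta_M J(C;M)$ strictly exceed those of every other summand, which requires some control on the $\beta_k$ that neither you nor the paper establishes.
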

\begin{proof}
We start by using \emph{Theorem \ref{keythm}} as we previously did, which allows us to set a first lower bound for the crossing number of $Sat(P,C)$.
\begin{equation*}
c(Sat(P,C))\geq B(J(Sat(P,C))).
\end{equation*}
Knowing that $\beta_M\neq 0$ and using \emph{Theorem \ref{thmJP}} and \emph{equation \ref{emax}} we can say that, in particular,
\begin{align*}
B(J(Sat(P,C)))&\geq B(\beta_MJ(C;M))=M_{\beta_MJ(C;M)}-m_{\beta_MJ(C;M)}\\
&=(M_{\beta_M}+M_{J(C;M)})-(m_{\beta_M}+m_{J(C;M)})\\
&=(M_{\beta_M}-m_{\beta_M})+(M_{J(C;M)}-m_{J(C;M)})=B(\beta_M)+B(J(C;M)).
\end{align*}
We now make use of \emph{Lemma \ref{lemmamain}}.
\begin{equation*}
B(\beta_M)+B(J(C;M))\geq B(\beta_M)+B(J(C^M)).{}
\end{equation*}
Finally, as seen in the proof of \emph{Theorem \ref{thmCrossParal}}, this can be explicitly bounded when $C$ is adequate, what leaves us with the coveted and final result:
\begin{equation*}
c(Sat(P,C))\geq (M_{\beta_M}-m_{\beta_M})+\frac{M^2}{2}c(C)+2M-1.
\end{equation*}
\end{proof}
\begin{corol}\label{corolLast}
In particular, if $M>1$ these strict inequalities hold:
\begin{enumerate}[label=(\roman*)]
	\item $c(Sat(P,C))> \frac{M^2}{2}\cdot c(C)$,
	\item $c(Sat(P,C))> M\cdot c(C)$, and
	\item $c(Sat(P,C))> c(C)$.
\end{enumerate}
Being this third inequality the sought answer to Kirby's Problem 1.67.
\end{corol}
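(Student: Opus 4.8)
The plan is to obtain all three strict inequalities purely algebraically from the bound already established in \emph{Theorem \ref{thmMain}}, exploiting two elementary observations: that the breadth term $M_{\beta_M}-m_{\beta_M}$ is non-negative, and that the hypothesis $M>1$ (so $M\geq 2$ in $\mathbb{N}$) forces the correction term $2M-1$ to be strictly positive. First I would record the starting inequality
\[c(Sat(P,C))\geq (M_{\beta_M}-m_{\beta_M})+\frac{M^2}{2}c(C)+2M-1,\]
and note that $M_{\beta_M}-m_{\beta_M}=B(\beta_M)\geq 0$ because $\beta_M$ is a nonzero Laurent polynomial, while $2M-1>0$ for every $M\geq 1$ and $c(C)\geq 0$.

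For \emph{(i)} I would simply discard the non-negative breadth summand $M_{\beta_M}-m_{\beta_M}$ while retaining $2M-1$; since this retained term is strictly positive, the inequality
\[c(Sat(P,C))\geq \frac{M^2}{2}c(C)+(2M-1)>\frac{M^2}{2}c(C)\]
is strict, giving \emph{(i)}. For \emph{(ii)} the key is that $M\geq 2$ yields $\frac{M^2}{2}\geq M$, hence $\frac{M^2}{2}c(C)\geq M\,c(C)$; adding the strictly positive $2M-1$ then produces $c(Sat(P,C))>M\,c(C)$. Finally, \emph{(iii)} follows by chaining: since $M\geq 1$ we have $M\,c(C)\geq c(C)$, so \emph{(ii)} gives
\[c(Sat(P,C))>M\,c(C)\geq c(C),\]
the sought strict lower bound answering Kirby's Problem 1.67.

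The ``main obstacle'' here is not genuine difficulty but careful bookkeeping of strictness: at each step where a \emph{non-strict} comparison is invoked — dropping $B(\beta_M)$ in \emph{(i)}, replacing $\frac{M^2}{2}$ by $M$ in \emph{(ii)}, or replacing $M$ by $1$ in \emph{(iii)} — one must ensure a strictly positive term survives to guarantee the final inequality is strict. The delicate boundary case is $M=2$ in \emph{(ii)}, where $\frac{M^2}{2}c(C)=M\,c(C)$ holds with equality, so the entire strictness is supplied solely by the leftover $2M-1=3>0$; this is the one place where no summand may be discarded carelessly. Subject to this caution, the corollary is an immediate consequence of \emph{Theorem \ref{thmMain}}.
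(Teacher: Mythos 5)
Your proposal is correct and follows essentially the same route as the paper, which simply declares the inequalities ``direct'' from \emph{Theorem \ref{thmMain}} (as in \emph{Corollary \ref{corolBeau}}); you have merely made the implicit bookkeeping explicit, namely $B(\beta_M)\geq 0$, $2M-1>0$, and $\frac{M^2}{2}\geq M$ for $M\geq 2$. No gaps.
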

\begin{proof}
The proofs are direct, as in \emph{Corollary \ref{corolBeau}}.
\end{proof}

And again, as in the previous section, the proposition can be further tuned for the specific case in which $C$ is an alternating knot.
\begin{corol}
If $C$ is alternating
\[c(Sat(P,C))\geq (M_{\beta_M}-m_{\beta_M})+\frac{M(M+1)}{2}c(C)+M-1.\]
\end{corol}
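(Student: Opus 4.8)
The plan is to follow the proof of \emph{Theorem \ref{thmMain}} essentially verbatim and to diverge only at the very last step, where the breadth of the parallel is bounded from below. First I would invoke \emph{Theorem \ref{keythm}} to write $c(Sat(P,C))\geq B(J(Sat(P,C)))$, and then combine \emph{Theorem \ref{thmJP}} with the ``max minus min'' inequality of \emph{equation \ref{emax}} to isolate the top term of $J(Sat(P,C))=\sum_{k=0}^M\beta_kJ(C;k)$, obtaining
\[B(J(Sat(P,C)))\geq B(\beta_MJ(C;M))=B(\beta_M)+B(J(C;M)).\]
Applying \emph{Lemma \ref{lemmamain}} then trades the cable for the parallel, giving $B(J(Sat(P,C)))\geq B(\beta_M)+B(J(C^M))$. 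Up to here nothing changes from the adequate case treated in \emph{Theorem \ref{thmMain}}.

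The divergence comes in estimating $B(J(C^M))$. Since $C$ is alternating it admits a reduced alternating diagram $D$, which is in particular adequate (so that the earlier use of \emph{Lemma \ref{lemmamain}} is justified), and for such a diagram \emph{Lemma \ref{lemmaLick6}} holds with equality, $|s_+D|+|s_-D|=n+2$, where $n$ is the number of crossings of $D$. Feeding this equality into \emph{Corollary \ref{corolKaufDr}} and then \emph{Proposition \ref{propBreadth}} --- exactly as in the proof of \emph{Corollary \ref{corolAlt}} --- yields
\[B(J(C^M))=\frac{2nM^2+2M(n+2)-4}{4}=\frac{M(M+1)}{2}n+M-1\geq\frac{M(M+1)}{2}c(C)+M-1,\]
the final inequality holding because $n\geq c(C)$. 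This is precisely the alternating sharpening of the bound $\frac{M^2}{2}c(C)+2M-1$ used in \emph{Theorem \ref{thmMain}}.

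Finally I would assemble the pieces, using $B(\beta_M)=M_{\beta_M}-m_{\beta_M}$ by definition of breadth, to conclude
\[c(Sat(P,C))\geq(M_{\beta_M}-m_{\beta_M})+\frac{M(M+1)}{2}c(C)+M-1.\]
I do not anticipate any genuine obstacle here: the argument is the same chain of inequalities as in \emph{Theorem \ref{thmMain}}, and the only point requiring care is bookkeeping consistency of the chosen diagram --- the alternating diagram $D$ whose circuit counts supply the equality in \emph{Lemma \ref{lemmaLick6}} must be the very same diagram whose parallel $D^M$ computes $B(J(C^M))$, and one must record that alternating-and-reduced implies adequate so that \emph{Lemma \ref{lemmamain}} applies legitimately to $C$. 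As remarked after \emph{Corollary \ref{corolAlt}}, the resulting inequality cannot be promoted to an equality, since $C^M$ (and hence the satellite) fails to be alternating even when $C$ is.
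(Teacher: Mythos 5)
Your proposal is correct and is essentially the paper's own argument: the paper proves this corollary by simply citing \emph{Corollary \ref{corolAlt}} as the modification to be inserted into the proof of \emph{Theorem \ref{thmMain}}, which is exactly the substitution you carry out explicitly (replacing the bound $|s_{\pm}D|\geq 2$ with the alternating equality $|s_+D|+|s_-D|=n+2$ in the estimate of $B(J(C^M))$). Your added care in noting that reduced alternating implies adequate, so that \emph{Lemma \ref{lemmamain}} and \emph{Corollary \ref{corolKaufDr}} legitimately apply, is a faithful spelling-out of what the paper leaves implicit.
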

\begin{proof}
Reflection of \emph{Corollary \ref{corolAlt}}.
\end{proof}

\section{Conclusion}
The main result proven in this work (\emph{Theorem \ref{thmMain}}) gives a partial proof (for adequate knots) to a long-held problem in knot theory, which Kirby expressed in \cite{Ki} (\emph{Problem 1.67}) as ``\textit{Is the crossing number of a satellite knot bigger than that of its companion?},'' and, in his own words, ``Surely the answer is yes, so the problem indicates the difficulties of proving statements about the crossing number.''

It would be of great interest as well to know whether the results and proofs presented in this work could be further extended or tuned to prove this famous problem in general --- without the adequacy condition --- and, more concretely, whether the lower bound could be raised to the expected inequality $c(Sat(P,C))\geq M^2\cdot c(C)$, as argued in \cite{Ho}. 

\section{Acknowledgements}
This work is the epitome of five years of postgraduate research at the University of Tokyo under the direction of Professor Toshitake Kohno, to whom I am profoundly grateful. I would also like to thank Y. Nozaki and Professor T. Kitayama for their helpful discussions and advice. Finally, special thanks to the Japanese Ministry of Education, Culture, Sports, Science and Technology (MEXT) for their financial support throughout these years.


\noindent Graduate School of Mathematical Sciences, \textsc{The University of Tokyo}\\
\emph{E-mail address}: \verb|adri@ms.u-tokyo.ac.jp|
\end{document}